\newtheorem{theorem}{Theorem}[section]
\newtheorem{corollary}[theorem]{Corollary}
\newtheorem{lemma}[theorem]{Lemma}
\theoremstyle{remark}
\newtheorem*{remark}{Remark}
\numberwithin{equation}{section}
\DeclareMathOperator{\supp}{supp }
\DeclareMathOperator{\const}{const}
\DeclareMathOperator{\aut}{Aut}
\DeclareMathOperator{\dist}{dist}
\DeclareMathOperator{\im}{Im }
\DeclareMathOperator{\dyadic}{dyadic }
\DeclareMathOperator{\heavy}{heavy }
\DeclareMathOperator{\light}{light }
\DeclareMathOperator{\saw}{saw}
\DeclareMathOperator{\round}{round}
\DeclareMathOperator{\interior}{Int}
\DeclareMathOperator{\BC}{\mathcal{BC}}
\title{Beurling-Carleson sets, inner functions \\ and a semi-linear equation}
\author{Oleg Ivrii and Artur Nicolau}
\date{August 30, 2023}
\begin{document}

\maketitle

\abstract{Beurling-Carleson sets have appeared in a number of areas of complex analysis such as boundary zero sets of analytic functions, inner functions with derivative in the Nevanlinna class, cyclicity in weighted Bergman spaces, Fuchsian groups of Widom-type and the corona problem in quotient Banach algebras. After surveying these developments, we give a general definition of Beurling-Carleson sets and discuss some of their basic properties. We show that the Roberts decomposition characterizes measures that do not charge Beurling-Carleson sets.

\enlargethispage{4.2pt}

For a positive singular measure $\mu$ on the unit circle, let $S_\mu$ denote the singular inner function with singular measure $\mu$.
In the second part of the paper, we use a corona-type decomposition to relate a number of properties of singular measures on the unit circle such as  membership of $S'_\mu$ in the Nevanlinna class $\mathcal N$, area conditions on level sets of $S_\mu$ and wepability.
It was known that each of these properties holds for measures concentrated on Beurling-Carleson sets. We show that each of these properties implies that $\mu$ lives on a countable union of Beurling-Carleson sets. We also describe partial relations involving the membership of $S'_\mu$ in the Hardy space $H^p$, membership of $S_\mu$ in the Besov space $B^p$ and $(1-p)$-Beurling-Carleson sets and  give a number of examples which show that our results are optimal.

Finally, we show that measures that live on countable unions of  $\alpha$-Beurling-Carleson sets are almost in bijection with nearly-maximal solutions of $\Delta u = u^p \cdot \chi_{u > 0}$ when $p > 3$ and $\alpha =  \frac{p-3}{p-1}$.
}

\section{Introduction}

A {\em Beurling-Carleson set} $E$ is a closed subset of the unit circle $\partial \mathbb{D}$  of zero length whose complementary arcs $\{ J \}$ satisfy
\begin{equation}
\label{eq:original-definition}
\| E \|_{\BC} = \sum_J |J| \log \frac{1}{|J|} < \infty.
\end{equation}
Beurling-Carleson sets were introduced by A.~Beurling \cite{beurling}, who showed that they constitute boundary zero sets of holomorphic functions on the unit disk that are H\"older continuous up to the boundary.
Several years later, L.~Carleson \cite{carleson} constructed outer functions that vanished to arbitrarily order on $E$. This construction was later improved to infinite order by Taylor and Williams \cite{taylor-williams}. Since then, Beurling-Carleson sets appeared in a number of areas of complex analysis such as inner functions, weighted Bergman spaces, Fuchsian groups and the corona problem.

In this paper, we will also consider Beurling-Carleson sets with respect to other gauge functions, although we will be mainly interested in usual Beurling-Carleson sets and $\alpha$-Beurling-Carleson sets with $0 < \alpha < 1$. These are defined by the condition
\begin{equation}
\label{eq:alpha-definition}
\| E \|_{\BC_\alpha} = \sum_J |J|^\alpha < \infty,
\end{equation}
in place of (\ref{eq:original-definition}).

\subsection{Derivative in Nevanlinna class}

An inner function is a bounded analytic function on the unit disk $\mathbb{D}$ which has unimodular radial limits almost everywhere on $\partial \mathbb{D}$.
Beurling-Carleson sets play an important role in understanding inner functions with derivative in the Nevanlinna class $\mathcal N$, which consists of analytic functions $f(z)$ on the unit disk for which
$$
\lim_{r \to 1} \int_{|z|=r} \log^+|f(z)| < \infty.
$$
Suppose $\mu$ is a positive singular measure on the unit circle and
$$
S_\mu(z) = \exp \biggl ( - \int_{\partial \mathbb{D}} \frac{\zeta+z}{\zeta-z} d\mu(\zeta) \biggr ), \qquad 
|z| < 1,
$$
 is the associated singular inner function. 
On the unit circle, the radial boundary values of $|S'_\mu|$ are given by
$$
|S'_\mu(z)| =  2 \int_{\partial \mathbb{D}} \frac{|d\zeta|}{|\zeta -  z |^2}, \qquad |z|=1,
$$
which could be infinite.
 M.~Cullen \cite{cullen} observed that if $\mu$ is  concentrated on a Beurling-Carleson set, then $S'_\mu \in \mathcal N$. The converse does not hold in general: there are singular inner functions $S_\mu$ with $S'_\mu \in \mathcal N$ for which the support of $\mu$ is not contained in a single Beurling-Carleson set. One consequence of \cite{ivr19} is that the condition $S'_\mu \in \mathcal N$ implies that $\mu$ lives on a countable union of Beurling-Carleson sets. The original proof used the classification of nearly-maximal solutions of the Gauss curvature equation $\Delta u = e^{2u}$. In Section \ref{sec:corona-construction}, we will give an elementary proof of this fact using a corona-type decomposition.

\begin{theorem}
\label{N-theory}
Let $\mu \ge 0$ be a singular measure on $\partial \mathbb{D}$. Consider the following conditions:
\begin{enumerate}
\item[{\em (0)}]  The measure $\mu$ is supported on a Beurling-Carleson set.
\item[{\em (1)}]  $S'_{\mu} \in  \mathcal N$.
\item[{\em (2)}] $S_\mu$ satisfies the area condition: for every $0 < c < 1$,
 \begin{equation}
\label{eq:area-condition}
\int_{\{z \in \mathbb{D}:\, |S_\mu(z)| < c\}} \frac{dA(z)}{1-|z|} < \infty.
\end{equation}
\item[{\em (3)}] The measure $\mu$ is concentrated on a countable union of Beurling-Carleson sets.
\end{enumerate}
We have $(0) \Rightarrow (1) \Rightarrow (2) \Rightarrow (3)$.
\end{theorem}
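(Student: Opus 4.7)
The plan is to prove the three implications in turn, with increasing technical depth.

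For $(0) \Rightarrow (1)$, I would use Cullen's classical estimate. Writing $S'_\mu = S_\mu \cdot \phi'$ with $\phi = \log S_\mu$, one has $\phi'(z) = -\int 2\zeta/(\zeta-z)^2\, d\mu(\zeta)$ and hence $|S'_\mu(e^{i\theta})| \le 2\int d\mu(\zeta)/|\zeta-e^{i\theta}|^2$ almost everywhere on $\partial \mathbb{D}$. For $e^{i\theta}$ in a complementary arc $J$ of $E = \supp \mu$, I would split the integrand by nearness to each of the two endpoints of $J$ and use $d(e^{i\theta}, E) \ge \min(\text{distances to endpoints})$ to obtain $\int_J \log^+|S'_\mu|\,d\theta \lesssim |J|\log(1/|J|) + |J|(1+\log \|\mu\|)$. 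Summing over $J$ and invoking the Beurling-Carleson condition $\sum |J|\log(1/|J|) < \infty$ yields $\int \log^+|S'_\mu|\,d\theta < \infty$.

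For $(1) \Rightarrow (2)$, the strategy is to convert the Nevanlinna bound $\sup_r \int \log^+|S'_\mu(re^{i\theta})|\,d\theta < \infty$ into an area integrability statement. The key observation is that whenever the angular level set $L_r = \{\theta: |S_\mu(re^{i\theta})| < c\}$ has substantial measure, the function $|S_\mu|$ must transition from values below $c$ inside $L_r$ to values close to $1$ outside (since boundary values have modulus one a.e.), forcing $|\partial_\theta S_\mu|$, and hence $r|S'_\mu|$, to be large near $\partial L_r$. A mean-value quantification yields a lower bound on $\int \log^+|S'_\mu(re^{i\theta})|\,d\theta$ in terms of $L_r$ and $1-r$, and integrating against $dr/(1-r)$ together with the Nevanlinna bound gives the area condition. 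An alternative route applies Green's identity to the positive harmonic function $u = \log(1/|S_\mu|) - \log(1/c)$ on $\Omega_c = \{|S_\mu| < c\}$, where $u$ vanishes on the interior boundary $\partial \Omega_c \cap \mathbb{D}$, relating the area integral to a boundary flux controlled by $\|S'_\mu\|_{\mathcal N}$.

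For $(2) \Rightarrow (3)$, I carry out a corona-type dyadic decomposition. For each threshold $c > 0$ and each dyadic arc $I$ on $\partial \mathbb{D}$, declare $I$ \emph{marked} if $|S_\mu(z_I)| < c$ at the center $z_I$ of the Whitney box $W(I) = \{re^{i\theta}: 1-|I| \le r \le 1-|I|/2,\ e^{i\theta} \in I\}$. Harnack's inequality for the positive harmonic function $-\log|S_\mu|$ implies $W(I) \subset \{|S_\mu| < c^{1/C}\}$ for a universal $C$, and since $\int_{W(I)} dA/(1-|z|) \sim |I|$, the area condition (applied at level $c^{1/C}$) converts into a length bound $\sum_{I \text{ marked}} |I| < \infty$. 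Taking $E$ to be the closure of the set of points lying in infinitely many marked arcs, $|E|=0$ by Borel--Cantelli, and since $\mu$-almost every point has infinite Lebesgue density, every such point lies in infinitely many marked arcs at every level, so $\mu$ is concentrated on $E$. Varying $c$ through a countable sequence and grouping the marked arcs into a dyadic tree then produces a countable union of Beurling-Carleson sets covering $\supp \mu$.

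The main obstacle is the last step: promoting the linear length bound $\sum_I |I| < \infty$ over marked arcs to a genuine Beurling-Carleson bound $\sum_J |J|\log(1/|J|) < \infty$ on the complementary arcs. The missing logarithmic factor must come from the multi-scale structure of the marking, with each complementary arc $J$ of length $|J|$ being accounted for by marked arcs at dyadic generation roughly $\log_2(1/|J|)$. In the elementary proof promised by the paper, this logarithmic bookkeeping is presumably arranged via a stopping-time construction on the Carleson dyadic tree that isolates a well-chosen subfamily of marked arcs whose lengths telescope into the Beurling-Carleson sum.
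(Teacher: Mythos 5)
Your sketch for $(0)\Rightarrow(1)$ is the standard argument of Cullen, which the paper simply cites without reproving, so that part is fine.

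For $(1)\Rightarrow(2)$ you propose either a ``mean-value quantification'' showing $|S'_\mu|$ must be large near $\partial L_r$, or Green's identity on $\Omega_c$. This is not the paper's route and, as written, neither variant is close to a proof. The mean-value idea does not obviously produce a bound of the form $\int \log^+|S'_\mu(re^{i\theta})|\,d\theta \gtrsim |L_r|\log\frac{1}{1-r}$, which is what you would need to integrate against $dr/(1-r)$; the Nevanlinna norm controls the size of $\log^+|S'_\mu|$ but not its localization near $\partial L_r$, and a thin level set hugging the boundary is precisely where the danger lies. The paper's argument is much more surgical: it invokes the Ahern--Clark inequality $|S'_\mu(re^{i\theta})| \le 4|S'_\mu(e^{i\theta})|$ along each radius where the angular derivative is finite, deduces $1 - |S_\mu(re^{i\theta})| \le 4|S'_\mu(e^{i\theta})|(1-r)$ and hence
$$
\Omega_c \cap [0, e^{i\theta}] \subset \Bigl[0, \bigl(1 - \tfrac{\varepsilon}{|S'_\mu(e^{i\theta})|}\bigr)e^{i\theta}\Bigr],
$$
so that the radial contribution to $\int_{\Omega_c}\frac{dA}{1-|z|}$ is $\lesssim \log^+|S'_\mu(e^{i\theta})| + O(1)$, and then integrates in $\theta$. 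Without the Ahern--Clark pointwise radial bound, I do not see how your argument closes.

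For $(2)\Rightarrow(3)$ you are structurally on the right track (the paper's proof also proceeds via a dyadic corona decomposition and a bound on dyadic arcs meeting a stopping-time set), but there are two real problems. First, what you call ``the main obstacle''---upgrading $\sum_{I\text{ marked}}|I|<\infty$ to $\sum_J |J|\log\frac{1}{|J|}<\infty$---is a non-issue: by Lemma~\ref{dyadic-interval-sums} the dyadic arc sum $\sum_{I\,\text{dyadic},\,I\cap E\ne\emptyset}|I|$ is \emph{already} comparable to the Beurling--Carleson sum $\sum_J |J|\log\frac{1}{|J|}$, so no stopping-time gymnastics are needed to recover the logarithm; you need only check that every dyadic arc meeting the candidate set is marked. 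Second, your candidate set $E$ (the closure of the points lying in infinitely many marked arcs) is the wrong set: for $x\in E$ it is not true that \emph{every} dyadic arc containing $x$ is marked, only infinitely many, so you cannot conclude $\sum_{I\cap E\ne\emptyset}|I|<\infty$. What you actually need, and what the paper's heavy/light corona decomposition produces, is a \emph{countable family} of closed sets $E_m$, one for each stopping component, with the property that every dyadic arc meeting $E_m$ inside the ambient heavy arc has $\mu(I)/|I|\gtrsim M$, hence $P_\mu\gtrsim M$ on $T_I$; Harnack then turns the area condition into $\sum_{I\cap E_m\ne\emptyset}|I|<\infty$ for each $m$, and Lemma~\ref{dyadic-interval-sums} finishes. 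Taking the closure of a single Borel--Cantelli set cannot substitute for this multi-scale stopping-time structure.
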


\subsection{Quotient Banach algebras}

Another important perspective on Beurling-Carleson sets stems from the work \cite{GMN} of P.~Gorkin, R.~Mortini and N.~Nikolskii, who studied the corona problem in the quotient space $H^\infty/IH^\infty$, where $I$ is an inner function. They noticed that point evaluations at the zeros of $I$ are dense in the maximal ideal space $\mathfrak M$ of $H^\infty/IH^\infty$ if and only if there exists a $ 0<c<1$ for which the sub-level set
$$
\Omega_c = \{ z \in \mathbb{D} : |I(z)| < c \}
$$
is contained within a bounded hyperbolic distance of the zero set of $I$. In this case, one says that $I$ has the {\em weak embedding property}\/. In \cite{borichev}, A.~Borichev introduced the class of {\em wepable} inner functions, i.e.~inner functions that could be made WEP if multiplied by a suitable Blaschke product. Consider the condition

{\em
\begin{enumerate}
\item[$(1')$]  $S_\mu$ is wepable.
\end{enumerate}
}

In \cite{BNT}, the authors proved that  $(0) \Rightarrow (1') \Rightarrow (2)$. Together with the implication $(2) \Rightarrow (3)$ from Theorem \ref{N-theory}, this shows that up to countable unions, the collection of measures $\mu$ for which $S_\mu$ is wepable also coincides with measures that are concentrated on Beurling-Carleson sets. 

\begin{remark}
Taking countable unions is necessary since there exist atomic measures $\mu$ for which $S_\mu$ is not wepable. See the proof of \cite[Theorem 3]{BNT}.
\end{remark}

\subsection{Derivative in $H^p$}

Next, we use a corona-type decomposition to study singular inner functions with derivative in the Hardy space $H^p$. We stick to the range of exponents $0 < p < 1/2$, since derivatives of singular inner functions are never in $H^{1/2}$.

\begin{theorem}
\label{Hp-theory}
Suppose $0 < p < 1/2$ and $\mu \ge 0$ is a singular measure on $\partial \mathbb{D}$. Consider the following conditions:
\begin{enumerate}
\item[{\em (1)}]  $S'_{\mu} \in  H^p$.
\item[{\em (2)}] $S_\mu$ satisfies the $(1+p)$-area condition: for every $0 < c < 1$,
 \begin{equation}
\label{eq:area-condition2}
\int_{\{z \in \mathbb{D}:\, |S_\mu(z)| < c\}} \frac{dA(z)}{(1-|z|)^{1+p}} < \infty.
\end{equation}
\item[{\em (3)}] The measure $\mu$ is concentrated on a countable union of $(1-p)$-Beurling-Carleson sets.
\end{enumerate}
We have $(1) \Rightarrow (2) \Rightarrow (3)$.
\end{theorem}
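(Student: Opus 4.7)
The plan is to follow the template of Theorem \ref{N-theory}, substituting the logarithmic Nevanlinna weights with $p$-th-power weights adapted to $H^p$ and $(1-p)$-Beurling-Carleson sets. The weight arithmetic that guides the substitution is the identity
\begin{equation*}
\int_{Q_J}(1-|z|)^{-1-p}\,dA \;\asymp\; |J|^{1-p}
\end{equation*}
for a Carleson box $Q_J$ over an arc $J$: this pairs the integrand in (\ref{eq:area-condition2}) with the summand in the $(1-p)$-BC condition (\ref{eq:alpha-definition}), and is the counterpart of the identity $\int_{Q_J}(1-|z|)^{-1}\,dA\asymp |J|\log(1/|J|)$ underlying the $\mathcal{N}$-case.

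For $(1) \Rightarrow (2)$, I would begin by rewriting $\|S'_\mu\|_{H^p}^p$ as a weighted area integral over $\mathbb{D}$ via a Littlewood--Paley or Hardy--Stein identity applied to $S'_\mu$, and then bound this from below by $\int_{\Omega_c}(1-|z|)^{-1-p}\,dA$. On $\Omega_c$ the Schwarz--Pick inequality $(1-|z|^2)|S'_\mu|\le 1-|S_\mu|^2$ together with the companion estimate $(1-|z|)|S'_\mu(z)|\lesssim |S_\mu(z)|\log(1/|S_\mu(z)|)$ (from the Poisson-integral formula for $(\log S_\mu)'$) give one-sided pointwise control; the matching averaged lower bound comes from a Whitney decomposition of $\Omega_c$ together with the mean-value principle: across each Whitney box $|S_\mu|$ must oscillate from values near $1$ down to values below $c$, forcing $|S'_\mu(z)|\gtrsim_c 1/(1-|z|)$ on a definite fraction of the box.

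For $(2) \Rightarrow (3)$, I would apply the corona-type decomposition of Section \ref{sec:corona-construction}. At each level $c$ the decomposition extracts a family of disjoint dyadic arcs $\{J_k\}$ that index the maximal Carleson boxes contained in $\Omega_c$; the area condition together with the weight arithmetic above translates directly into
\begin{equation*}
\sum_k |J_k|^{1-p}\;\lesssim\;\int_{\Omega_c}(1-|z|)^{-1-p}\,dA\;<\;\infty,
\end{equation*}
which says that the boundary set of accumulation points produced by the decomposition is a $(1-p)$-Beurling-Carleson set carrying the corresponding piece of $\mu$. Running through a sequence $c_n\to 0$ and using that $|S_\mu(z)|\to 0$ non-tangentially at $\mu$-a.e. $\zeta\in \partial\mathbb{D}$ shows that $\mu$ lies on the union of these sets, giving the countable union in (3).

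The principal obstacle is the lower bound on $|S'_\mu|$ inside $\Omega_c$ needed for $(1)\Rightarrow(2)$: the Schwarz--Pick inequality delivers only one-sided pointwise control, so one must work with averages on a carefully chosen Whitney scale and verify that they carry the right exponent. The restriction $0<p<1/2$ is expected to enter at exactly this step; the horocircle calculation for a single atom $\mu=\delta_{\zeta_0}$ shows that the integral in (\ref{eq:area-condition2}) diverges precisely at $p=1/2$, matching the classical fact that $S'_\mu\notin H^{1/2}$ for any non-trivial singular $\mu$.
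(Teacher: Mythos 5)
Your sketch of $(2)\Rightarrow(3)$ matches the paper's route: the corona-type decomposition of Section~\ref{sec:corona-decomposition} based on the density $\mu(I)/|I|$ produces pruned pieces $E = I_j^{(m)} \setminus \bigcup \interior J_k^{(m)}$, every dyadic arc $I$ meeting such an $E$ has $\mu(I)/|I| > M/100$ and hence $P_\mu \gtrsim M$ on $T_I$, and the weight arithmetic $\int_{T_I}(1-|z|)^{-1-p}\,dA \asymp |I|^{1-p}$ converts the $(1+p)$-area condition into the finiteness of $\sum_{I\,\text{dyadic},\, I\cap E\ne\emptyset} |I|^{1-p}$.

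The step $(1)\Rightarrow(2)$, however, has a genuine gap. The claimed averaged lower bound --- that on a definite fraction of each Whitney box of $\Omega_c$ one has $|S'_\mu(z)| \gtrsim 1/(1-|z|)$ because $|S_\mu|$ "must oscillate from values near $1$ down to values below $c$" --- fails on Whitney boxes in the deep interior of $\Omega_c$. Take $\mu = \delta_1$, so that $\Omega_c$ is a horodisc at $1$. On a Whitney box $T$ at height $1-|z| \approx 2^{-n}$ inside that horodisc, $\operatorname{Re}\frac{1+z}{1-z} \approx 2^n$ uniformly, so $|S_\mu| \approx e^{-2^n}$ throughout $T$ (no oscillation whatsoever), and $|S'_\mu| \approx e^{-2^n}\,2^{2n}$, which is astronomically smaller than $2^n$. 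The area integral over $\Omega_c$ still converges (for $p<1/2$), but it cannot be dominated on those boxes by $|S'_\mu|^p$ placed there. The paper avoids this entirely: instead of working box-by-box inside $\Omega_c$, it uses the Ahern--Clark monotonicity $|S'_\mu(re^{i\theta})| \le 4|S'_\mu(e^{i\theta})|$ at points where the angular derivative is finite, deduces that the radial fibre $\Omega_c \cap [0,e^{i\theta}]$ lies inside $\bigl[0,\,(1 - \varepsilon/|S'_\mu(e^{i\theta})|)e^{i\theta}\bigr]$, and then integrates in polar coordinates, so that
$\int_{\Omega_c}(1-|z|)^{-1-p}\,dA \lesssim \int_{\partial\mathbb{D}} |S'_\mu(e^{i\theta})|^p\,d\theta = \|S'_\mu\|_{H^p}^p$.
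This converts a pointwise boundary estimate into the area bound without requiring any interior lower bound on $|S'_\mu|$. Also note the Schwarz--Pick inequality $(1-|z|^2)|S'_\mu(z)| \le 1-|S_\mu(z)|^2$ you invoke goes the wrong way for this purpose: on $\Omega_c$ it only says $|S'_\mu| \lesssim 1/(1-|z|)$, which is an upper, not a lower, bound.
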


Unfortunately, it is no longer true that if $\mu$ is supported on a  $(1-p)$-Beurling-Carleson set, then $S'_{\mu} \in  H^p$.

We say that a finite measure $\mu \ge 0$ satisfies a property {\em up to countable sums} if it can be written
as a countable sum of finite measures $\mu_k \ge 0$ satisfying the property.
In Section \ref{sec:derivative-hardy}, we will see that  conditions (1) and (3) are different even after allowing countable sums. Nevertheless, in Section \ref{sec:besov-spaces}, we will show that conditions (1) and (2) agree after passing to countable sums.

We mention an additional condition on the measure $\mu$, equivalent to (2), due to P.~Ahern \cite{ahern} and A.~Reijonen and T.~Sugawa \cite{reijonen-sugawa}:

{\em
\begin{enumerate}
\item[$(2')$] The integral
$$
\int_{\mathbb{D}} |S_\mu'(z)|^q (1-|z|^2)^{-p+(q-1)} dA(z) < \infty,
$$
for some (and hence, all) $1 \le q \le 2$.
\end{enumerate}
}
When $q = 1$, the above condition says that $S'_\mu$ belongs to the Besov space $B^p$. The implication $(1) \Rightarrow (2')$ can also be found in Ahern's paper.


\subsection{Differential equations}

  It was observed   in \cite{ivr19} that characterizing inner functions with derivative in Nevanlinna class amounts to understanding nearly-maximal solutions of the Gauss curvature equation $\Delta u = e^{2u}$. These turn out to be in one-to-one correspondence with measures that live on countable unions of Beurling-Carleson sets. We refer the reader to Section \ref{sec:background-PDE} for the relevant definitions and background on semi-linear equations.
      
  In Section \ref{sec:nearly-maximal}, we show the following theorem which partially characterizes the nearly-maximal solutions of $\Delta u = u^p \cdot \chi_{u > 0}$\,:

\begin{theorem}
\label{nearly-maximal}
{\em (i)} When $p > 3$, deficiency measures of nearly-maximal solutions are concentrated on countable unions of $\alpha$-Beurling-Carleson sets, where $\alpha = \frac{p-3}{p-1}$.
Conversely, any finite positive measure on the unit circle concentrated on a  countable union of $\beta$-Beurling-Carleson sets for some $\beta < \alpha$ arises as the deficiency measure of some nearly-maximal solution.
 
{\em (ii)} When $1 < p \le 3$, the only nearly-maximal solution is the maximal one.
\end{theorem}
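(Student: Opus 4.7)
My approach would parallel the treatment of the Gauss curvature equation $\Delta u = e^{2u}$ from \cite{ivr19} recalled above, now adapted to the power nonlinearity. The maximal solution $u_{\max}$ is the largest solution of $\Delta u = u^p \cdot \chi_{u > 0}$, and it blows up at the boundary at rate $u_{\max}(z) \asymp (1-|z|)^{-2/(p-1)}$, obtained from a radial ODE computation together with a comparison argument. A solution $u$ is nearly-maximal when the deficiency $v = u_{\max} - u \ge 0$ vanishes in a suitable averaged sense on $\partial \mathbb{D}$; its deficiency measure $\mu$ is the weak boundary trace of $v$.

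For part (ii), when $1 < p \le 3$, the exponent $\alpha = (p-3)/(p-1)$ is non-positive, signalling that the equation is subcritical and that every boundary point is removable. I would show that no atomic measure $\delta_\zeta$ can arise as a deficiency measure by sandwiching $u_{\max}$ between Keller--Osserman-type supersolutions in a neighbourhood of $\zeta$ and then bootstrapping via a weighted integration exploiting the blow-up rate $2/(p-1) \ge 1$ of $u_{\max}$. Combined with the $\alpha$-Beurling-Carleson discussion in part (i), which forces any deficiency measure to be purely atomic once $\alpha \le 0$, this gives $v \equiv 0$ and so the only nearly-maximal solution is $u_{\max}$ itself.

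For part (i), forward direction, let $u$ be nearly-maximal with deficiency $v$. Testing the equation $\Delta v = u_{\max}^p - u^p$ against an appropriate weight and exploiting the convexity estimate $u_{\max}^p - u^p \gtrsim u_{\max}^{p-1} v$ should yield an $\alpha$-adapted area condition of the same shape as (\ref{eq:area-condition}) and (\ref{eq:area-condition2}), that is, a finite weighted area integral over the region $\{v \ge c \cdot u_{\max}\}$. Feeding this into the corona-type decomposition of Section \ref{sec:corona-construction}, essentially repeating the step $(2) \Rightarrow (3)$ from Theorems \ref{N-theory} and \ref{Hp-theory} with the $\alpha$-gauge in place of the logarithmic and $(1-p)$-gauges, produces a countable family of closed subsets of $\partial \mathbb{D}$ whose complementary arcs $\{J\}$ satisfy $\sum |J|^\alpha < \infty$ and together carry $\mu$.

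For the converse, given $\mu$ supported on a countable union of $\beta$-Beurling-Carleson sets with $\beta < \alpha$, I would construct a nearly-maximal solution with deficiency measure $\mu$ on each constituent set $E$ separately and then superpose. Using a Carleson--Taylor--Williams outer function $F_E$ vanishing to high order on $E$, one builds a candidate subsolution of the form $u_{\max} - \varphi(|F_E|) \cdot (\text{profile})$; the minimal solution of the equation dominating this candidate should be the desired nearly-maximal solution. The gap $\beta < \alpha$ provides exactly the slack needed to verify the $\alpha$-area condition for the constructed $v$ and to route around the borderline case. The main obstacle, which I expect to be the hardest step, is matching the deficiency measure exactly to $\mu$ rather than to something merely absolutely continuous with respect to $\mu$; this will require two-sided estimates on $v$ near $E$ that read off the precise boundary trace, in the spirit of the sharpness arguments announced for Theorem \ref{Hp-theory}.
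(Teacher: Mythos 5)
The proposal departs from the paper at every stage and contains two genuine gaps.

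\textbf{Part (ii).} Your plan — ruling out atomic deficiency measures and appealing to ``part (i) forces any deficiency measure to be purely atomic once $\alpha \le 0$'' — is circular: part (i) is stated only for $p>3$, so it cannot be invoked here, and nothing in the $\alpha$-Beurling-Carleson machinery singles out atoms when $\alpha \le 0$. The paper's actual argument is a one-liner: when $1 < p \le 3$ the exponent $\alpha-1 \le -1$, so Harnack's inequality gives $P_\mu(z) \lesssim (1-|z|)^{-1}\mu(\partial\mathbb D) \lesssim u_{\max}(z)$ for \emph{every} measure $\mu$. Rescaling $\mu$ by a small $\varepsilon$ brings $u_{\max}-P_{\varepsilon\mu} \ge \tfrac12 u_{\max}$, and the restoring property (Lemma~\ref{fail-lemma2}) then forces $\Lambda[u_{\max}-P_{\varepsilon\mu}] = u_{\max}$. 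Invisibility of $\varepsilon\mu$ gives invisibility of $\mu$.

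\textbf{Part (i), forward direction.} Your idea of pairing $\Delta v = u_{\max}^p - u^p$ against the weight $(1-|z|)$ does produce the right exponent: on $\{v \ge c\, u_{\max}\}$ one has $u_{\max}^p - u^p \gtrsim u_{\max}^p \asymp (1-|z|)^{(\alpha-1)p}$, and $(\alpha-1)p + 1 = \alpha-2$, so $\int_{\{v \ge c\, u_{\max}\}} (1-|z|)^{\alpha-2}\,dA < \infty$, which is the correct $\alpha$-area condition. The gap is the next step. Lemma~\ref{area-condition-countable-unions} works because the level set $\{z : P_\mu(z) > c\}$ \emph{contains} every box $T_I$ over a dyadic arc with $\mu(I)/|I| > M/100$, which is exactly what the corona stopping-time construction hands you. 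For a nearly-maximal solution you only have the one-sided bound $v \le P_\mu$ from Lemma~\ref{fundamental-lemma}; there is no a priori lower bound $v \gtrsim u_{\max}$ on boxes where $\mu(I)/|I|$ is large, so the corona decomposition cannot be fed the region $\{v \ge c\, u_{\max}\}$. The paper avoids this by proving the contrapositive with the Roberts decomposition and the restoring property (Lemmas~\ref{fail-lemma}--\ref{fail-lemma2} combined with Corollary~\ref{roberts-series} and Lemma~\ref{one-at-a-time}): if $\mu$ does not charge $\alpha$-Beurling-Carleson sets, iterating $\Lambda_{1-1/n_j}$ across the Roberts layers $\mu_j$ shows $\Lambda[u_{\max}-P_\mu] = u_{\max}$ directly, with no need to go through an area condition or a lower bound on $v$.

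\textbf{Part (i), converse direction.} Your Carleson--Taylor--Williams subsolution $u_{\max} - \varphi(|F_E|)\cdot(\text{profile})$ would give you existence of \emph{some} nearly-maximal solution $\Lambda[\cdot]$ dominating it, but as you yourself note, it does not control the deficiency measure — and that is the entire content of the theorem. The paper's construction is quite different: it builds a special Privalov star $\tilde K_E$ with tent heights $h(I) = \min(|I|, |I|^\alpha/u(z_I))$ chosen so that (1) the harmonic average of $u_{\max}$ over $\partial\tilde K_E$ is finite and (2) the Riemann map $\psi: \tilde K_E \to \mathbb D$ has angular derivative in $(1-\varepsilon, 1)$ on a subset of $E$ of $\mu$-mass $\ge (1-\varepsilon)\mu(E)$. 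Writing $u \le A - B$ on $\tilde K_E$ via harmonic comparison and using Theorem~\ref{radon-nikodym2} to identify the boundary trace of $B\circ\varphi$ as $(1-\varepsilon)$-close to $\mu$ is what pins the deficiency down; this ``transfer of measure through a conformal map'' step has no analogue in your sketch and is the crux of the argument. The gap $\beta < \alpha$ enters in verifying condition~(1) via H\"older with exponent $\lambda = \alpha(1-\alpha)/(1-\beta)$, not as a margin in an area condition.
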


It is natural to wonder if there is a precise correspondence between nearly-maximal solutions of $\Delta u = u^p \cdot \chi_{u > 0}$ and measures that live on countable unions of $\alpha$-Beurling-Carleson sets. Unfortunately, with our current techniques, we are unable to either prove or disprove this tantalizing hypothesis.

\section{Notes and references}

\subsection{Weighted Bergman spaces}

 Beurling-Carleson sets also arise naturally  in the study of cyclic functions in the weighted Bergman spaces $A_\alpha^p$, which consists of holomorphic functions on the unit disk satisfying 
$$
\| f \|_{A^p_\alpha}^p \, = \, \int_{\mathbb{D}} |f(z)|^p \, (1-|z|)^\alpha dA(z) \, < \, \infty, \qquad \alpha > -1, \quad 1 < p < \infty.
$$
A function $f \in A^p_\alpha$ is {\em cyclic} if the closure of the set $\{p f : p \text{ polynomial}\}$ is dense in $A_\alpha^p$. One question that puzzled mathematicians in the late 1960s was: {\em When is the singular inner function $S_\mu$ cyclic?}
It was not difficult to show that if $\mu$ is concentrated on a Beurling-Carleson set, then the singular inner function $S_\mu$ could not be cyclic.  In the other direction, it was known that if $\mu$ had modulus of continuity bounded by $C t \log(1/t)$, then $S_\mu$ was cyclic. The gap between Beurling-Carleson sets and the $t \log 1/t$ condition stood for a number of years until it was resolved independently by B.~Korenblum \cite{korenblum-cyc} and J.~Roberts \cite{roberts}. Roberts' approach used an elegant structure theorem for measures that do charge Beurling-Carleson sets. In Section \ref{sec:BC-sets}, we will prove a converse of Roberts' result, thereby giving a description of positive singular measures that do not charge Beurling-Carleson sets. 

\subsection{Model spaces}

Let $A^\infty$ denote the space of holomorphic functions on the open unit disk which extend to smooth functions on the closed unit disk.
To an inner function $F(z)$, one can associate the {\em model space} $K_F = H^2 \ominus FH^2$. K.~Dyakonov and D.~Khavinson \cite{DK} were curious as to whether $K_F$ contained smooth functions. They showed that $K_F \cap A^\infty = \{ 0 \}$ if and only if $F = S_\mu$ where $\mu$ does not charge Beurling-Carleson sets.

In a recent work, A.~Limani and B.~Malman \cite{LM22} asked the opposite question: when is $K_F \cap A^\infty$ dense in
$K_F$? They showed that this occurs if and only if $F = BS_\mu$, where $B$ is an arbitrary Blaschke product and $\mu$ is concentrated on a countable union of Beurling-Carleson sets.

\subsection{Character-automorphic functions} Widom \cite{widom} and Pommerenke \cite{pommerenke, pommerenke2} studied functions which were character-automorphic under Fuchsian groups of convergence type. A {\em character} $v$ of a Fuchsian group $\Gamma \subset \aut(\mathbb{D})$ is a homomorphism of $\Gamma$ to the unit circle. A function $f$ on the unit disk is called {\em character automorphic} if 
$$
f(\gamma(z)) = v(\gamma) \cdot f(z), \qquad \gamma \in \Gamma.
$$
 One natural character automorphic function is the Blaschke product $g(z)$ whose zeros constitute
 an orbit of $\Gamma$ (it is related to the Green's function of $\mathbb{D}/\Gamma$). If $g(z)$ has zeros at the points $\{ \gamma(0) : \gamma \in \Gamma \}$, i.e.~$$g(z) = \prod_{\gamma \in \Gamma} -\frac{\overline{\gamma(0)}}{|\gamma(0)|} \cdot \frac{z-\gamma(0)}{1-\overline{\gamma(0)}z},$$ then
 $$
 |g'(z)| = \sum_{\gamma \in \Gamma} |\gamma'(z)|, \qquad |z| = 1.
 $$
For a character $v$, let $H^\infty(\Gamma, v)$ denote the space of bounded holomorphic $v$-automorphic functions. Building on the work of Widom, Pommerenke \cite{pommerenke} showed that
$$
g' \in \mathcal N \quad \Longleftrightarrow \quad H^\infty(\Gamma, v) \ne \{ \const \}, \text{ for every }v,
$$
and observed that the above condition is satisfied if the limit set $\Lambda(\Gamma)$ is a Beurling-Carleson set. 

Pommerenke \cite[Theorem 2]{pommerenke2}  also showed that $\Lambda$ is a Beurling-Carleson set if and only if there is a $\Gamma$-invariant holomorphic vector field $h(z) \frac{\partial}{\partial z}$ on the unit disk with $h'(z) \in H^\infty$.
  
\subsection{Fat Beurling-Carleson sets}

A related class of sets was introduced by S.~Khruschev, which is natural to call {\em fat Beurling-Carleson sets}\/. These are closed subsets of the unit circle which satisfy the entropy condition (\ref{eq:original-definition}), but have positive Lebesgue measure. Amongst other things, Khruschev showed that if $K$ is a closed subset of the unit circle which does not contain any fat Beurling-Carleson sets, then there is a sequence of polynomials $p_n(z)$ which tend to 1 in the Bergman space $A^2(\mathbb{D})$ but to $0$ in $C(K)$. Conversely, if such a sequence of polynomials exists, then $K$ cannot contain any fat Beurling-Carleson sets.

The proof presented in  \cite[Chapter II.3]{HJ} uses a structure theorem due to N.~G.~Makarov \cite{makarov}. Given a closed subset $K$ of the circle which does not contain fat Beurling-Carleson sets and an arc $I \subset \partial \mathbb{D}$, there exists a measure $\mu = \mu_I$ supported on $I \setminus K$ which satisfies 

(i) $\mu(I) \ge |I| \log \frac{1}{|I|}$,

(ii) $\mu(J) \le 3 |J| \log \frac{1}{|J|}$ for any arc $J \subseteq I$.

\noindent The first condition implies that $\mu$ has substantial mass, while the second condition says that $\mu$ is spread out.

For more applications of fat Beurling-Carleson sets, we refer the reader to \cite{LM21b, LM21a, Mal22}.

\section{Beurling-Carleson sets}
\label{sec:BC-sets}

In this section, we give a general definition of Beurling-Carleson sets and discuss some of their basic properties. We say that $\phi: [0, 1] \to [0,\infty)$ is a {\em regular gauge function} if
\smallskip

\begin{enumerate}[leftmargin=17mm]
\item[(G1)] One can write $$\phi(t) \, = \, t \cdot \phi_1(t) \, = \, t\int_t^1 \frac{ds}{\lambda(s)},$$
where $\lambda(t)$ is a non-negative function such that $\int_0^1 \frac{ds}{\lambda(s)} = \infty$.
\item[(G2)] The function $\lambda(t)$ satisfies the  doubling condition
\begin{equation}
\label{eq:lambda-two}
 \lambda(\theta \cdot t) \asymp \lambda(t), \qquad \theta \in [1, 2].
\end{equation}
\item[(G3)] There exists a constant $C > 0$ such that
$$
\sum_{k=0}^\infty \phi(2^{-k} t) \le C\phi(t), \qquad t \in [0, 1].
$$
\end{enumerate}
A closed subset $E$ of the unit circle of zero length is called a {\em $\phi$-Beurling-Carleson set} if
\begin{equation}
\| E \|_{\BC_\phi} = \sum_k \phi(|J_k|) < \infty,  \end{equation}
where the sum is over the complementary arcs $\{J_k\}$ of $E$. 

For each $n \ge 0$, we can partition the unit circle into $2^n$ {\em dyadic arcs} of generation $n$:
$$
\bigl \{ z \in \partial \mathbb{D} \, : \, k \cdot 2^{-n} \cdot 2\pi \, < \, \arg z \, < \,  (k+1) \cdot 2^{-n}  \cdot 2\pi \bigr \}
, \qquad k = 0, 1, \dots, 2^n-1.  $$
  We  denote the collection of dyadic arcs of generation $n$ by $\mathcal D_n$. The {\em dyadic grid} $\mathcal D = \bigcup_{n=0}^\infty \mathcal D_n$ is the collection of all dyadic arcs.
 
Given a closed set $E$, the {\em Privalov star} $K_E$ is defined as the union of the Stolz angles of opening $\pi/2$ emanating from points of $E$. 

The following lemma provides several other characterizations of Beurling-Carleson sets:

\begin{lemma}
\label{dyadic-interval-sums}
Let $E$ be a closed subset of the unit circle of zero length. Denote the complementary arcs 
by $\{J_k\}$, i.e.~$\partial \mathbb{D} \setminus E = \bigcup J_k$. If $\phi$ is a regular gauge function, then the following quantities are comparable:

{\renewcommand{\arraystretch}{1.5}
{\em 
\begin{tabular}{ c l l }
 (a) & Arc sum: & $\sum_k \phi(|J_k|)$ \\ 
 (b) & Distance integral: & $\int_{\partial \mathbb{D} \setminus E}  \phi_1(\dist(x,E)) \, dx$ \\  
 (c) & Dyadic arc sum: & $\sum_{\substack{I \dyadic \\   I \cap E \ne \emptyset}}  \frac{|I|^2}{\lambda(|I|)}$ \\    
 (d) & Privalov star integral: & $\int_{K_E} \frac{dA(z)}{\lambda(1-|z|)}$
\end{tabular}
}}
\end{lemma}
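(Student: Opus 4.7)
The plan is to establish the chain $(b) \asymp (c) \asymp (d)$ by direct geometric reasoning, and then close the loop with $(a) \asymp (b)$. The first two comparisons rely only on the doubling axiom (G2); the equivalence with (a) is where the summability axiom (G3) enters, and I expect it to be the main technical step.

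For $(b) \asymp (c)$, I would exchange the order of sum and integral:
\[
\sum_{\substack{I \dyadic \\ I \cap E \neq \emptyset}} \frac{|I|^2}{\lambda(|I|)} \; = \; \int_{\partial \mathbb{D}} \biggl( \sum_{\substack{I \ni x \\ I \cap E \neq \emptyset}} \frac{|I|}{\lambda(|I|)} \biggr)\, dx.
\]
For $x$ with $\dist(x,E) = t > 0$, the dyadic arcs through $x$ that meet $E$ are precisely those of length at least a fixed multiple of $t$, one per dyadic scale; summing these and using (G2) identifies the inner sum with $\int_t^1 ds/\lambda(s) = \phi_1(t)$ up to constants, which is the integrand of (b). For $(c) \asymp (d)$, I would decompose $\mathbb{D}$ into Whitney-type top halves $T_I = \{z \in \mathbb{D} : z/|z| \in I,\; |I|/2 < 1 - |z| \le |I|\}$ indexed by the dyadic grid: these partition the disk, $|T_I| \asymp |I|^2$, and $\lambda(1-|z|) \asymp \lambda(|I|)$ on $T_I$ by (G2). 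A point $z$ lies in the Privalov star $K_E$ exactly when $\dist(z/|z|, E) \lesssim 1 - |z|$, and up to a bounded enlargement by neighbouring arcs this coincides with $\bigcup_{I \cap E \neq \emptyset} T_I$; integrating $1/\lambda(1-|z|)$ over this union reproduces (c).

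The main work is $(a) \asymp (b)$. On a complementary arc $J_k$ of length $\ell_k$, parameterize so that $\dist(\cdot, E) = \min(t, \ell_k - t)$ and apply Fubini to get
\[
\int_{J_k} \phi_1(\dist(x,E))\, dx \; = \; 2\int_0^{\ell_k/2} \phi_1(t)\, dt \; = \; 2\int_0^{\ell_k/2} \frac{s}{\lambda(s)}\, ds \; + \; \ell_k \, \phi_1(\ell_k/2).
\]
The boundary term is comparable to $\phi(\ell_k)$ by (G2), which supplies both the lower bound $\phi(\ell_k) \lesssim \int_{J_k} \phi_1(\dist(x,E))\, dx$ and the easy half of the upper bound. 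The obstacle is controlling the ``head'' integral $\int_0^{\ell_k/2} s/\lambda(s)\, ds$: a dyadic decomposition gives $\asymp \sum_{j \ge 1} (2^{-j}\ell_k)^2 / \lambda(2^{-j}\ell_k)$, each term of which is dominated by $\phi(2^{-j}\ell_k)$, and axiom (G3) then bounds the geometric sum by a constant multiple of $\phi(\ell_k)$. Summing over $k$ closes the chain; note that (G3) is used essentially and only at this step, which is consistent with its role as the ``entropy'' input needed to compare the arc weights in (a) with the integrated distance weights in (b).
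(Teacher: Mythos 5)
Your proposal is correct and follows essentially the same approach as the paper: Whitney-decompose the complementary arcs and invoke (G3) for $(a)\asymp(b)$, use the doubling axiom (G2) and the dyadic top halves $T_I$ to relate the planar integrals to the dyadic arc sum, and recognize the Privalov star as a bounded perturbation of $\bigcup_{I\cap E\neq\emptyset} T_I$. The only deviation is cosmetic: you close the chain by relating (b) directly to (c) via Fubini, whereas the paper passes from (b) to (d) by polar coordinates and then from (d) to (c), but both routes use identical tools.
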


\smallskip

\begin{remark}
In (d), instead of integrating over the Privalov star $K_E$, one can also integrate over the region
$$
\Omega_E = \mathbb{D} \setminus \bigcup_k Q_{J_k},$$ where 
$$
Q_J \, = \, \biggl \{z \in \mathbb{D} \, : \, \frac{z}{|z|} \in J, \ 0 < 1-|z| < |J| \biggr \}
$$ is the Carleson box with base $J \subset \partial \mathbb{D}$.
Alternatively, one can integrate over the domain
$$
\Omega^{\dyadic}_E = \bigcup_{\substack{I \dyadic \\   I \cap E \ne \emptyset}} T_I,
$$
where
$$
T_I \, = \, \biggl \{z \in \mathbb{D} \, : \, \frac{z}{|z|} \in I, \ \frac{|I|}{2} < 1-|z| < |I| \biggr \}
$$ 
denotes the top half of the Carleson box which rests on $I$.
\end{remark}

\subsubsection*{Examples}

\begin{enumerate} 
\item[(i)] If $\phi(t) = t \log t^{-1}$, then $\lambda(t) = t$ and we recover the usual Beurling-Carleson condition:
$$
\sum_k |J_k| \log \frac{1}{|J_k|} \ \asymp \ \int_{[0,1]\setminus E} \log \frac{1}{\dist(x,E)} dx
\ \asymp \ \sum_{\substack{I \dyadic \\   I \cap E \ne \emptyset}} |I| 
\ \asymp \ \int_{K_E} \frac{dA(z)}{1-|z|}.
$$

\item[(ii)] If $\phi(t) = t^\alpha$ with $0<\alpha<1$, then $\lambda(t) \sim \frac{t^{2 - \alpha}}{1-\alpha}$ as $t \to 0^+$ and we get the $\alpha$-Beurling-Carleson condition:
$$
\sum_k |J_k|^\alpha \ \asymp \ \int_{[0,1]\setminus E}\dist(x,E)^{\alpha-1} dx
\ \asymp \ \sum_{\substack{I \dyadic \\   I \cap E \ne \emptyset}} |I|^\alpha
\ \asymp \ \int_{K_E} \frac{dA(z)}{(1-|z|)^{2-\alpha}}.
$$
\end{enumerate}

\begin{proof}[Proof of Lemma \ref{dyadic-interval-sums}]
The comparability of the ``arc sum'' and the ``distance integral'' follows after subdividing each complementary interval $J_k$ into Whitney arcs and applying the estimate (G3), while the comparability of the ``distance integral'' and the ``Privalov star integral'' follows from integrating in polar coordinates.

It remains to relate the ``Privalov star integral'' and the ``dyadic arc sum.''  By the doubling property (G2) of $\lambda$, we have
$$
\int_{T_I} \frac{dA(z)}{\lambda(1-|z|)} \asymp  \frac{|I|^2}{\lambda(|I|)}.
$$ 
Summing over the dyadic arcs $I$ which meet $E$ gives
$$
\int_{\Omega^{\dyadic}_E} \frac{dA(z)}{\lambda(1-|z|)} \, \asymp \, 
\sum_{\substack{I \dyadic \\   I \cap E \ne \emptyset}}  \frac{|I|^2}{\lambda(|I|)}.
$$
Inspection shows that
\begin{equation}
\label{eq:dyadic-goal}
\int_{\Omega^{\dyadic}_E} \frac{dA(z)}{\lambda(1-|z|)} \, \asymp \,
\int_{\Omega_E} \frac{dA(z)}{\lambda(1-|z|)} \, \asymp \, 
 \int_{K_E} \frac{dA(z)}{\lambda(1-|z|)}.
\end{equation}
The proof is complete.
\end{proof}

\subsection{Dyadic grid with respect to a gauge function}
 
 A {\em $\phi$-dyadic grid}
is a collection of dyadic arcs $\mathcal D_\phi = \bigcup_j \mathcal D_{n_j}$ where the sequence $\{n_j\}$ satisfies
\begin{equation}
\label{eq:scaling-condition}
\int^{2^{-n_j}}_{2^{-n_{j+1}}} \frac{dt}{\lambda(t)} \, \asymp \, \int_{2^{-n_j}}^{1} \frac{dt}{\lambda(t)} \, \asymp \, \phi_1(2^{-n_j}) , \qquad j=1,2,\dots
\end{equation}
In particular, the above condition implies that $\phi_1(|I|) \asymp \phi_1(|J|)$ whenever $I \in \mathcal D_{n_{j+1}}$ and $J \in \mathcal D_{n_{j}}$.

\subsubsection*{Examples}

\begin{enumerate} 
\item[(i)]  If $\phi(t) = t\log t^{-1}$, one can take $n_j = 2^j$ and obtain the super-dyadic scales $2^{-n_j} = 2^{-2^j}$. In this case, $\lambda(t) = t$.

\item[(ii)] When $\phi(t) = t^\alpha$, $\alpha > 0$, one can take $n_j = j$ and get the standard dyadic scales $2^{-j}$.  In this case,  $\lambda(t) \asymp \frac{t^{2 - \alpha}}{1-\alpha}$ as $t \to 0$.
\end{enumerate}

\subsubsection*{Dyadic shells and boxes}
 We can decompose the unit disk $\mathbb{D}$ into $\phi$-dyadic shells:
$$
\mathcal A_{\phi, 0} = \{z \in \mathbb{D} : |z| < 1- 2^{-n_1}\},
$$
and
$$
\mathcal A_{\phi, j} = \{z \in \mathbb{D} : 1 - 2^{-n_j} < |z| < 1- 2^{-n_{j+1}}\}, \quad j = 1, 2, \dots
$$
Each shell can be further subdivided into $\phi$-dyadic boxes:
$$
T_I^\phi = \mathcal A_{\phi, j} \cap Q(I) = \bigl  \{ re^{i\theta} \in \mathbb{D}: \ \theta \in I, \  1 - 2^{-n_j} < r < 1- 2^{-n_{j+1}} \bigr \},
$$
where $I$ ranges over $\mathcal D_{n_j}$. For further reference, we note that
\begin{equation}
\label{eq:top-phi-integral}
\int_{T_I^\phi}  \frac{dA(z)}{\lambda(1-|z|)} \, \asymp \, |I| \cdot \phi_1(|I|) \, = \, \phi(|I|).
\end{equation}

\subsection{Roberts decomposition}

In a remarkable work \cite{roberts}, J.~Roberts came up with an elegant structure theorem for measures that do not charge Beurling-Carleson sets. This is done by {\em grating} a measure with respect to finer and finer partitions associated to a $\phi$-dyadic grid.

\begin{theorem}
\label{roberts-decomposition}
Let $\phi:[0,1]\to [0,\infty)$ be a regular gauge function and  $\mathcal D_\phi = \bigcup \mathcal D_{n_k}$ be a $\phi$-dyadic grid. Let $\mu$ be a finite positive measure on $\partial \mathbb{D}$. Then for any integer $j_0 \ge 0$ and $C > 0$, one can decompose $\mu = \sum_{j=1}^\infty \mu_j + \mu_\infty$ so that $\mu_j(I) \le C \phi(|I|)$ for any $I \in \mathcal D_{n_{j + j_0}}$ and $\mu_\infty$ is concentrated on a $\phi$-Beurling-Carleson set.
\end{theorem}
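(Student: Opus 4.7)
The plan is to carry out the scale-by-scale grating procedure of Roberts. Set $\nu_0 = \mu$, and for each $j \ge 1$ and each $I \in \mathcal{D}_{n_{j+j_0}}$ define
\[
\mu_j|_I \, = \, \min\bigl(1,\, C\phi(|I|)/\nu_{j-1}(I)\bigr)\,\nu_{j-1}|_I
\]
(with the convention $0/0 = 0$), $\nu_j = \nu_{j-1} - \mu_j$, and $\mu_\infty = \lim_j \nu_j$ (a decreasing limit of positive measures). By construction $\mu_j(I) = \min(\nu_{j-1}(I), C\phi(|I|)) \le C\phi(|I|)$ for $I \in \mathcal{D}_{n_{j+j_0}}$, and telescoping gives $\mu = \sum_{j=1}^\infty \mu_j + \mu_\infty$.

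Call $I \in \mathcal{D}_{n_{j+j_0}}$ \emph{$j$-bad} if $\nu_{j-1}(I) > C\phi(|I|)$, and let $B_j$ be the union of $j$-bad arcs. On $j$-good arcs $\nu_j|_I = 0$, so $\nu_j$ (and hence $\mu_\infty$) is supported on $B_j$. If $I$ is $j$-bad, then $\nu_{j-1}|_I$ is nonzero, which forces the parent $I' \in \mathcal{D}_{n_{(j-1)+j_0}}$ of $I$ to be $(j-1)$-bad (else $\nu_{j-1}|_{I'}$ was completely removed at the previous step). Thus the bad arcs are nested, $B_j \subseteq B_{j-1}$, and $\mu_\infty$ is concentrated on the closed set $\tilde{E} = \bigcap_j B_j$. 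A crude count $\#\{j\text{-bad}\} \le \|\mu\|/(C\phi(2^{-n_{j+j_0}}))$ gives $|B_j| \le \|\mu\|/(C\phi_1(2^{-n_{j+j_0}})) \to 0$, so $|\tilde{E}| = 0$.

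To show $\tilde{E}$ is a $\phi$-Beurling-Carleson set I will estimate the distance integral from Lemma \ref{dyadic-interval-sums}. Decompose
\[
\partial\mathbb{D} \setminus \tilde{E} \;=\; \bigsqcup_{j \ge 1}\ \bigsqcup_{I \in \mathcal{G}_j} I,
\]
where $\mathcal{G}_j$ is the collection of $j$-good arcs $I \in \mathcal{D}_{n_{j+j_0}}$ whose parent at scale $n_{(j-1)+j_0}$ is $(j-1)$-bad (and $\mathcal{G}_1$ is all $1$-good arcs). Since $\tilde{E} \cap I = \emptyset$ for $I \in \mathcal{G}_j$, we have $\dist(x,\tilde{E}) \ge \dist(x,\partial I)$ for $x \in I$, so $\int_I \phi_1(\dist(x,\tilde{E}))\,dx \le \int_I \phi_1(\dist(x,\partial I))\,dx \asymp \phi(|I|)$ by Lemma \ref{dyadic-interval-sums} applied to the two-point set $\partial I$.

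The crucial bound is then $\sum_{j,\, I \in \mathcal{G}_j} \phi(|I|) < \infty$. Setting $L_j = \|\mu_j\|$, the identity $\mu_{j-1}(I') = C\phi(|I'|)$ on $(j-1)$-bad arcs yields $\#\{(j-1)\text{-bad arcs}\} \le L_{j-1}/(C\phi(2^{-n_{(j-1)+j_0}}))$. Each $(j-1)$-bad parent has $2^{n_{j+j_0}-n_{(j-1)+j_0}}$ children at scale $n_{j+j_0}$, hence
\[
\sum_{I \in \mathcal{G}_j} \phi(|I|) \,\le\, \frac{L_{j-1}}{C}\cdot 2^{n_{j+j_0}-n_{(j-1)+j_0}} \cdot \frac{\phi(2^{-n_{j+j_0}})}{\phi(2^{-n_{(j-1)+j_0}})} \,\asymp\, \frac{L_{j-1}}{C},
\]
where the cancellation uses $\phi(t) = t\phi_1(t)$ together with the grid scaling ratio $\phi_1(2^{-n_{j+j_0}})/\phi_1(2^{-n_{(j-1)+j_0}}) \asymp 2$ from \eqref{eq:scaling-condition}. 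Summing in $j \ge 2$, using $\sum_j L_j \le \|\mu\|$, and bounding the $j = 1$ term crudely by $\phi_1(2^{-n_{1+j_0}})$, the total is at most $2\|\mu\|/C + O_{j_0}(1)$, so $\tilde{E}$ is $\phi$-Beurling-Carleson by Lemma \ref{dyadic-interval-sums}. The main obstacle is the combinatorial bookkeeping of nested bad arcs across consecutive grid scales; once set up, the grid scaling condition makes the powers of $2$ telescope cleanly and the mass-conservation sum $\sum_j L_j \le \|\mu\|$ seals the estimate.
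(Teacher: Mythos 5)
Your proposal carries out exactly Roberts' grating construction used in the paper, and the key combinatorial estimate --- summing $\phi$ over the good/light arcs by combining the grid scaling condition \eqref{eq:scaling-condition} with the mass conservation $\sum_j \|\mu_j\| \le \|\mu\|$ --- is the same in both. The only cosmetic difference is that you verify the Beurling--Carleson condition through the distance integral (item (b) of Lemma \ref{dyadic-interval-sums}) rather than the arc sum, and track the nested bad-arc structure a bit more explicitly; otherwise this is the paper's argument with slightly different bookkeeping.
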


\begin{proof} For each $j = 1, 2, \dots$, we can define a partition $P_j$ of the unit circle into $2^{n_{j + j_0}}$ arcs of equal length (we consider half-open arcs which contain only one of the endpoints, for example, the left endpoint). Since $2^{n_{j + j_0}}$ divides $2^{n_{j+ j_0+1}}$, each next partition can be chosen to be a refinement of the previous one.

To define $\mu_1$, consider the arcs in the partition $P_1$. Call an arc $I \in P_1$  {\em light} if
$\mu(I) \le C\phi(|I|)$ and {\em heavy} otherwise. On a light arc, take $\mu_1 = \mu$, while
on a heavy arc, let $\mu_1$ be a  multiple of $\mu$ so that the mass $\mu_1(I) = C\phi(|I|)$. The measure $\mu_1$ will be called the {\em grated measure} of $\mu$ with respect to the partition $P_1$. Clearly, $\mu_1 \le \mu$.
Consider the difference  $\mu - \mu_1$ and grate it with respect to the partition $P_2$ to form the measure $\mu_2$, then consider  $\mu - \mu_1 - \mu_2$ and grate it with respect to $P_3$ to form $\mu_3$, and so on. Continuing in this way, we obtain a sequence of measures $\mu - \mu_1, \mu - \mu_1 - \mu_2, \dots$ where each next measure is supported on the heavy arcs of the previous generation.

By construction, the bound $\mu_j(I) \le C \phi(|I|)$, $I \in \mathcal D_{n_{j+j_0}}$ holds for all $j$, while the residual measure $\mu_\infty$ is supported on the set of points which always lie in heavy arcs.  A fortiori, the residual measure is supported on the complement of the light arcs and we need to show that
$
\sum_{I \light} \phi(|I|) < \infty.
$
The scaling condition (\ref{eq:scaling-condition}) tells us that
$$
\sum_{\substack{I \subset J \\ I \in \mathcal D_{n_{j+1}}}}  \phi(|I|) \, = \, |J| \cdot \phi_1(|I|) \, \le \,
 C \phi(|J|), 
 \qquad J \in \mathcal D_{n_j}.
$$
Since a light arc of generation $j \ge 2$ is contained in a heavy one, 
\begin{align*}
\sum_{\text{light}} \phi(|I|) & \lesssim
2^{n_{j_0}}\phi(2^{-n_{j_0}}) + \sum_{\text{heavy}} \phi(|J|) \\ 
& = 2^{n_{j_0}}\phi(2^{-n_{j_0}}) + \frac{1}{C} \sum_j \sum_{\substack{J \in \mathcal D_{n_{j + j_0}} \\  J \heavy}} \mu_j(J) \\
& \le 2^{n_{j_0}}\phi(2^{-n_{j_0}}) + \frac{1}{C} \cdot \mu(\partial \mathbb{D}).
\end{align*}
The proof is complete.
\end{proof}

\begin{corollary}
\label{roberts-series}
If $\mu$ does not charge $\phi$-Beurling-Carleson sets, then for any $j_0 \ge 0$ and $C > 0$, one can write $\mu = \sum \mu_j$ where $\mu_j(I) \le C \phi(|I|)$ for any $I \in \mathcal D_{n_{j+j_0}}$.
\end{corollary}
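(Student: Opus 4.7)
The plan is straightforward: apply Theorem \ref{roberts-decomposition} with the prescribed $j_0$ and $C$ to obtain a decomposition $\mu = \sum_{j=1}^\infty \mu_j + \mu_\infty$, and then use the hypothesis on $\mu$ to rule out the residual term $\mu_\infty$. Once $\mu_\infty$ is shown to vanish, the bounds $\mu_j(I) \le C\phi(|I|)$ on $I \in \mathcal D_{n_{j + j_0}}$ are inherited directly from the theorem.

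The key observation I would exploit is that, by construction, each partial sum $\mu_1 + \dots + \mu_N$ is bounded above by $\mu$ as positive measures. Indeed, $\mu_1$ is obtained from $\mu$ by lowering the density on the heavy arcs of $P_1$, so $\mu_1 \le \mu$; inductively, $\mu_j$ is a positive submeasure of the previous residual $\mu - \mu_1 - \dots - \mu_{j-1}$. Passing to the limit yields $\mu_\infty = \mu - \sum_j \mu_j \le \mu$.

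Now I invoke the second conclusion of Theorem \ref{roberts-decomposition}: $\mu_\infty$ is concentrated on some $\phi$-Beurling-Carleson set $E$. Since $\mu$ does not charge $\phi$-Beurling-Carleson sets, $\mu(E) = 0$; combined with $\mu_\infty \le \mu$, this forces $\mu_\infty(E) = 0$, and hence $\mu_\infty$ is the zero measure. The desired decomposition $\mu = \sum_j \mu_j$ with the required arc-mass bounds follows.

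There is no real obstacle to overcome, since the substantive work was already carried out in Theorem \ref{roberts-decomposition}. The only point worth making explicit is the monotonicity $\mu_\infty \le \mu$, which is immediate from the inductive grating construction but is not built into the statement of the theorem.
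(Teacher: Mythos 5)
Your argument is correct and is precisely the intended derivation: the paper states this as an immediate corollary of Theorem \ref{roberts-decomposition} without writing out the details, and your two observations --- that $\mu_\infty \le \mu$ by the grating construction, and that $\mu_\infty$ is concentrated on a $\phi$-Beurling-Carleson set which $\mu$ does not charge, forcing $\mu_\infty = 0$ --- supply exactly the missing steps.
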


We now show the converse of Corollary \ref{roberts-series}:

\begin{corollary}
Suppose that there exists a constant $C > 0$ so that for any offset $j_0 \ge 0$, one can decompose the measure $\mu$ into a countable sum $\mu = \sum \mu_j$ so that $\mu_j(I) \le C \phi(|I|)$ for any $I \in \mathcal D_{n_{j + j_0}}$. Then $\mu$ does not charge $\phi$-Beurling-Carleson sets.
\end{corollary}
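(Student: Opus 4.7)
The plan is to fix an arbitrary $\phi$-Beurling-Carleson set $E$ and show $\mu(E) = 0$, exploiting the freedom to take the offset $j_0 \ge 0$ as large as we wish while keeping the same constant $C$. For fixed $j_0$, using the associated decomposition $\mu = \sum_j \mu_j$ and the fact that the arcs of $\mathcal D_{n_{j+j_0}}$ meeting $E$ cover $E$, the hypothesis gives
$$
\mu_j(E) \, \le \sum_{\substack{I \in \mathcal D_{n_{j+j_0}} \\ I \cap E \ne \emptyset}} \mu_j(I) \, \le \, C \sum_{\substack{I \in \mathcal D_{n_{j+j_0}} \\ I \cap E \ne \emptyset}} \phi(|I|).
$$
Summing in $j \ge 1$ then yields $\mu(E) \le C \cdot S(j_0)$, where
$$
S(j_0) \, := \, \sum_{j \ge 1} \sum_{\substack{I \in \mathcal D_{n_{j+j_0}} \\ I \cap E \ne \emptyset}} \phi(|I|).
$$

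The second step is to prove that $S(j_0) \to 0$ as $j_0 \to \infty$. By (\ref{eq:top-phi-integral}), each term $\phi(|I|)$ with $I \in \mathcal D_{n_{j+j_0}}$ is comparable to $\int_{T_I^\phi} dA(z)/\lambda(1-|z|)$; the boxes $T_I^\phi$ are pairwise disjoint, and for $j \ge 1$ they sit inside the collar $\{1-|z| < 2^{-n_{j_0+1}}\}$ near $\partial\mathbb{D}$. Running the Privalov-star/dyadic-saturation identification from the proof of Lemma \ref{dyadic-interval-sums} in reverse, but now adapted to the coarser $\phi$-dyadic grid, I expect to arrive at
$$
S(j_0) \, \asymp \, \int_{K_E \,\cap\, \{1-|z|\,<\,2^{-n_{j_0+1}}\}} \frac{dA(z)}{\lambda(1-|z|)}.
$$
Because $E$ is a $\phi$-Beurling-Carleson set, Lemma \ref{dyadic-interval-sums}(d) guarantees that $\int_{K_E} dA(z)/\lambda(1-|z|) < \infty$, so the tail of this integral over the shrinking collar tends to $0$ as $j_0 \to \infty$.

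Combining the two steps gives $\mu(E) \le C \cdot S(j_0) \to 0$, whence $\mu(E) = 0$, as required. The conceptual content is that a single decomposition of $\mu$ only produces the finite bound $\mu(E) \lesssim \|E\|_{\BC_\phi}$, but the uniformity of $C$ across all offsets $j_0$ lets us push all of the relevant $\phi$-dyadic boxes into an arbitrarily thin collar near $\partial \mathbb{D}$, which is exactly where the integrability of $1/\lambda(1-|z|)$ over $K_E$ pays off. The one point that I expect to require genuine care, and which is the main obstacle in the plan, is verifying the last displayed asymptotic for $S(j_0)$: this should follow from the same Whitney-type considerations as in Lemma \ref{dyadic-interval-sums}, now carried out at the scales $\{n_{j+j_0}\}_{j \ge 1}$ of the $\phi$-grid rather than at every dyadic generation, together with a routine truncation that realises the collar $\{1-|z| < 2^{-n_{j_0+1}}\}$ as the union of the relevant shells $\mathcal A_{\phi,\, j+j_0}$ for $j \ge 1$.
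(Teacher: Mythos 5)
Your overall plan (bound $\mu_j(E)$ by $C\sum_{I\in\mathcal D_{n_{j+j_0}},\,I\cap E\neq\emptyset}\phi(|I|)$, sum in $j$, and show the resulting quantity $S(j_0)$ tends to $0$ as the offset grows) is correct and is the same strategy the paper uses. The gap is exactly at the point you flagged: the asymptotic
\[
S(j_0)\,\asymp\,\int_{K_E\cap\{1-|z|<2^{-n_{j_0+1}}\}}\frac{dA(z)}{\lambda(1-|z|)}
\]
is false, and in particular the direction you actually need, $S(j_0)\lesssim\int_{K_E\cap\text{collar}}$, fails. The $\phi$-boxes $T_I^\phi$ with $I\in\mathcal D_{n_{j+j_0}}$ and $I\cap E\neq\emptyset$ are not contained in $K_E$ (nor in $\Omega_E^{\dyadic}$): the deep part of $T_I^\phi$, with $1-|z|$ as small as $2^{-n_{j+j_0+1}}$, lies beneath subarcs of $I$ that may not touch $E$ at all and hence lies in no Stolz angle based on $E$. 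Concretely, take the classical gauge $\phi(t)=t\log\frac1t$, $\lambda(t)=t$, $n_j=2^j$, and $E$ a finite set: for an arc $I\in\mathcal D_{n_{m}}$ meeting $E$ one has $\phi(|I|)=2^{-2^m}\cdot 2^m\log 2$, while the Stolz-angle contribution to the shell integral is $\int_{2^{-2^{m+1}}}^{2^{-2^m}}\frac{t\,dt}{\lambda(t)}\approx 2^{-2^m}$, so their ratio grows like $2^m$. No choice of absolute constant $C'$ bridges this, even after summing in $j$.

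What is true, and what your argument actually needs, is that the \emph{total} sum
\[
S(0)\,=\,\sum_{j\ge1}\ \sum_{\substack{I\in\mathcal D_{n_j}\\ I\cap E\neq\emptyset}}\phi(|I|)
\]
is finite (then $S(j_0)$ is a tail of a convergent series and tends to $0$). This follows from the distance-integral characterization of Lemma~\ref{dyadic-interval-sums}(b) together with the scaling condition~(\ref{eq:scaling-condition}), rather than from the Privalov-star integral restricted to shells. Write $\phi(|I|)=\int_I\phi_1(|I|)\,dx$ and interchange the sums to get
\[
S(0)\,=\,\int_{\partial\mathbb{D}\setminus E}\Bigl(\sum_{j\le J(x)}\phi_1(2^{-n_j})\Bigr)dx,
\]
where $J(x)$ is the largest $j$ for which the arc of $\mathcal D_{n_j}$ containing $x$ still meets $E$. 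The scaling condition gives $\phi_1(2^{-n_{j+1}})-\phi_1(2^{-n_j})=\int_{2^{-n_{j+1}}}^{2^{-n_j}}\frac{ds}{\lambda(s)}\gtrsim\phi_1(2^{-n_j})$, so the numbers $\phi_1(2^{-n_j})$ grow at least geometrically in $j$ and hence $\sum_{j\le J}\phi_1(2^{-n_j})\lesssim\phi_1(2^{-n_J})\le\phi_1(\dist(x,E))$, since $\dist(x,E)\le 2^{-n_{J(x)}}$. Thus $S(0)\lesssim\int_{\partial\mathbb{D}\setminus E}\phi_1(\dist(x,E))\,dx\asymp\|E\|_{\BC_\phi}<\infty$, and your proof closes. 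To summarize: keep your framing, but replace the claimed shell-wise comparison of $S(j_0)$ with the Privalov star by the direct bound $S(0)\lesssim\|E\|_{\BC_\phi}$ obtained from the distance integral and the geometric growth of $\phi_1$ along the $\phi$-dyadic scales.
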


\begin{proof}
Let $E$ be a $\phi$-Beurling-Carleson set. By Lemma \ref{dyadic-interval-sums}, for any $\varepsilon > 0$, we can choose the offset $j_0 \ge 0$ sufficiently large so that 
$$
\sum_{j=1}^\infty  \int_{K_E \cap \mathcal A_{\phi, j+j_0}} \frac{dA(z)}{\lambda(1-|z|)}   < \varepsilon.
$$
In view of (\ref{eq:top-phi-integral}), we have
\begin{align*}
\mu_j(E) & = \sum_{\substack{I \in \mathcal D_{n_{j+j_0}} \\   I \cap E \ne \emptyset}} \mu_j(I) \\
& \le C \sum_{\substack{I \in \mathcal D_{n_{j+j_0}} \\   I \cap E \ne \emptyset}} \phi(|I|) \\
& \le  C' \int_{K_E \cap \mathcal A_{\phi, j+j_0}} \frac{dA(z)}{\lambda(1-|z|)}.
\end{align*}
Summing over $j = 1, 2, \dots$ yields $\mu(E) \le C' \varepsilon$. Since $\varepsilon > 0$ was arbitrary,   $\mu(E) = 0$ as desired.
\end{proof}

\subsection{Local behaviour}

The following theorem roughly says that measures on the unit circle which are sufficiently spread out cannot charge Beurling-Carleson sets:

\begin{theorem}
\label{diffuse-thm}
 Suppose $w(\varepsilon)/\varepsilon$ is strictly decreasing on $(0,1]$. Then,
$\mu(E) = 0$ for every $\phi$-Beurling-Carleson set $E$ and positive measure $\mu$ on the unit circle satisfying the modulus of continuity condition 
$$\mu(I) \le c \cdot w(|I|), \qquad I \subset \partial \mathbb{D},$$
 if and only if 
 \begin{equation}
 \int_0^1 \frac{\varepsilon}{\lambda(\varepsilon) w(\varepsilon)} d\varepsilon = \infty.
 \end{equation}
\end{theorem}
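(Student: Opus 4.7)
The plan is to prove the two directions of the equivalence separately.

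For the sufficient direction, suppose $\int_0^1 \varepsilon/(\lambda(\varepsilon) w(\varepsilon))\,d\varepsilon = \infty$, let $E$ be a $\phi$-Beurling-Carleson set, and let $\mu$ satisfy the modulus bound. Writing $N_n$ for the number of generation-$n$ dyadic arcs meeting $E$, Lemma~\ref{dyadic-interval-sums}(c) gives
$$
\sum_{n} N_n \cdot \frac{2^{-2n}}{\lambda(2^{-n})} \, < \, \infty.
$$
Covering $E$ by these $N_n$ arcs and applying the modulus bound yields $\mu(E) \le c\, N_n w(2^{-n})$ for every $n$, so the direction reduces to showing $\liminf_n N_n w(2^{-n}) = 0$. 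I would argue by contradiction: if $N_n w(2^{-n}) \ge \delta > 0$ for all large $n$, then the BC dyadic sum dominates $\delta \sum_n 2^{-2n}/(\lambda(2^{-n}) w(2^{-n}))$; this discrete sum is comparable to $\int_0^1 \varepsilon/(\lambda(\varepsilon)w(\varepsilon))\,d\varepsilon$ via the doubling property~(G2) of $\lambda$ and the fact that $w(\varepsilon) \asymp w(2^{-n})$ on $[2^{-n-1}, 2^{-n}]$ (which follows from $w(\varepsilon)/\varepsilon$ being strictly decreasing, together with $w$ non-decreasing), giving the desired contradiction.

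For the necessary direction, suppose the integral converges and produce an explicit counterexample. I would construct a nested Cantor-like family $E_0 \supset E_1 \supset \cdots$ where $E_k$ is a union of $L_k$ dyadic arcs of scale $\ell_k = 2^{-n_k}$ taken from a $\phi$-dyadic grid, with each stage-$(k-1)$ arc split into $m_k = L_k/L_{k-1}$ equally-spaced stage-$k$ sub-arcs. Choose $L_k \asymp 1/w(\ell_k)$, the minimal size needed to respect the modulus bound, and let $\mu$ be the weak-$*$ limit of the normalized Lebesgue measures on $E_k$, so $\mu$ assigns mass $1/L_k$ to each stage-$k$ arc. The modulus condition $\mu(I) \le c\, w(|I|)$ is then verified arc by arc: when $|I| \le \ell_k/m_{k+1}$, $I$ meets at most two stage-$(k+1)$ arcs, and when $|I| \ge \ell_k/m_{k+1}$, monotonicity of $w(|I|)/|I|$ handles the bound. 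The complementary arcs of $E = \bigcap_k E_k$ are the gaps introduced at the successive stages, and the $\phi$-dyadic scaling (\ref{eq:scaling-condition}) together with Lemma~\ref{dyadic-interval-sums} lets one bound the $\BC_\phi$-sum $\sum_k L_k \phi(\ell_{k-1}/m_k)$ by a constant multiple of the convergent integral.

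The main obstacle is the converse construction: the branching rates $m_k$ (equivalently, the scales $\ell_k$) must be chosen so that both the modulus bound and the finiteness of the $\BC_\phi$-sum hold simultaneously. A naive binary Cantor split need not align its BC contributions with the integrand $\varepsilon/(\lambda(\varepsilon) w(\varepsilon))$; working at the scales of a $\phi$-dyadic grid appears essential so that each stage's gap contribution matches the corresponding dyadic piece of the integral, with error constants absorbed by~(G2) and~(G3).
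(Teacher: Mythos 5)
The paper does not actually prove this statement: it cites Berman--Brown--Cohn \cite[Corollary 4.1]{BBC} for general gauges, and Ahern \cite{ahern} and Shapiro \cite{shapiro} for the classical case, so there is no in-paper argument to compare against. Your sufficiency direction is correct and clean: covering $E$ by the $N_n$ generation-$n$ dyadic arcs meeting it gives $\mu(E)\le c\,N_nw(2^{-n})$, and if $\liminf_n N_nw(2^{-n})>0$ the dyadic $\BC_\phi$-sum would dominate $\sum_n 2^{-2n}/(\lambda(2^{-n})w(2^{-n}))\asymp\int_0^1\varepsilon/(\lambda(\varepsilon)w(\varepsilon))\,d\varepsilon=\infty$, contradicting Lemma~\ref{dyadic-interval-sums}(c). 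Do make explicit that you are using that $w$ is non-decreasing (built into ``modulus of continuity'' but not stated in the theorem), both for $w(\varepsilon)\asymp w(2^{-n})$ on a dyadic block and again in the converse.

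For the converse, your Cantor construction at the $\phi$-dyadic scales $\ell_k=2^{-n_k}$ with $L_k\asymp 1/w(\ell_k)$ is the right idea and the deferred computation does close. Concretely, the level-$k$ gaps contribute $L_k\phi(\ell_{k-1}/m_k)=L_{k-1}\ell_{k-1}\,\phi_1(\ell_{k-1}/m_k)\asymp \tfrac{\ell_{k-1}}{w(\ell_{k-1})}\phi_1(\ell_{k-1})$, using that $\ell_k\le \ell_{k-1}/m_k\le\ell_{k-1}$ and (\ref{eq:scaling-condition}); on the other side, since $\varepsilon\mapsto\varepsilon/w(\varepsilon)$ is increasing, $\int_{\ell_k}^{\ell_{k-1}}\frac{\varepsilon\,d\varepsilon}{\lambda(\varepsilon)w(\varepsilon)}\ge\tfrac{\ell_k}{w(\ell_k)}\int_{\ell_k}^{\ell_{k-1}}\tfrac{d\varepsilon}{\lambda(\varepsilon)}\asymp\tfrac{\ell_k}{w(\ell_k)}\phi_1(\ell_k)$, so after shifting the index the $\BC_\phi$-sum is bounded by a constant times the convergent integral. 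Two bookkeeping items you should record to make the sketch airtight: (i) the construction does not overflow, since $m_k\ell_k\lesssim\ell_{k-1}$ is equivalent (via $L_k\asymp 1/w(\ell_k)$) to the hypothesis that $w(\varepsilon)/\varepsilon$ is decreasing; and (ii) $|E|=0$ because $L_k\ell_k\asymp\ell_k/w(\ell_k)\to 0$, which holds automatically here, for if $w(\varepsilon)/\varepsilon$ were bounded then $\int_0^1\varepsilon/(\lambda(\varepsilon)w(\varepsilon))\,d\varepsilon\gtrsim\int_0^1 d\varepsilon/\lambda(\varepsilon)=\infty$ by (G1), contradicting convergence. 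With these, your outline gives a genuine counterexample and the equivalence is established.
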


\smallskip

In full generality, Theorem \ref{diffuse-thm} was proved by R.~D.~Berman, L.~Brown and W.~S.~Cohn \cite[Corollary 4.1] {BBC}. For usual Beurling-Carleson sets, Theorem \ref{diffuse-thm} goes back to P.~Ahern \cite{ahern} and J.~H.~Shapiro \cite{shapiro}.
\subsubsection*{Examples}

\begin{enumerate} 
\item[(i)]  If $\phi(t) = t\log t^{-1}$, the above condition reads: $\int_0^1 w(\varepsilon)^{-1} d\varepsilon = \infty.$

\item[(ii)] For $\phi(t) = t^\alpha$, $\alpha > 0$, the condition becomes  $\int_0^1 \varepsilon^{\alpha-1} w(\varepsilon)^{-1} d\varepsilon = \infty.$
\end{enumerate}

\smallskip

\begin{theorem}
\label{local-behaviour}
Suppose $\mu$ is a measure on the unit circle supported on a countable union of $\phi$-Beurling-Carleson sets.  Let 
 $\mu(x,\varepsilon) = \mu(I(x,\varepsilon))$ where $I(x, \varepsilon)$ is the arc on the unit circle centered at $x$ of length $2\varepsilon$.  For almost every point $x$ on the unit circle with respect to $\mu$,
$$\int_0^1 \frac{\varepsilon}{\lambda(\varepsilon) \mu(x, \varepsilon)} d\varepsilon < \infty.$$
\end{theorem}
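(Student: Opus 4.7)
The plan is to prove the stronger statement that the integral $\int_0^1 \varepsilon/(\lambda(\varepsilon)\mu(x,\varepsilon)) \, d\varepsilon$ lies in $L^1(d\mu)$, from which $\mu$-a.e.\ finiteness follows at once.

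A first reduction lets me assume $\mu$ is concentrated on a single $\phi$-Beurling-Carleson set $E$: writing $\mu = \sum_k \mu_k$ with $\supp \mu_k$ contained in the $k$-th BC set, the pointwise inequality $\mu(x,\varepsilon) \ge \mu_k(x,\varepsilon)$ for $x \in \supp \mu_k$ reduces the problem to each summand. I then discretize dyadically: using the doubling property (G2), the monotonicity of $\varepsilon \mapsto \mu(x,\varepsilon)$, and $\int_{2^{-n-1}}^{2^{-n}} \varepsilon \, d\varepsilon \asymp 2^{-2n}$, one obtains (after a harmless reindexing)
\[
\int_0^1 \frac{\varepsilon \, d\varepsilon}{\lambda(\varepsilon)\,\mu(x,\varepsilon)} \; \lesssim \; \sum_{n \ge 0} \frac{2^{-2n}}{\lambda(2^{-n})\,\mu(x,2^{-n})},
\]
so it suffices to show this last sum is in $L^1(\mu)$.

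Integrating against $d\mu(x)$ and applying Fubini, the key geometric estimate is: for each $n$ and each $I \in \mathcal D_n$, every $x \in I$ satisfies $I \subset I(x,2^{-n})$, hence $\mu(x,2^{-n}) \ge \mu(I)$. Since $\supp \mu \subseteq E$,
\[
\int_{\partial \mathbb{D}} \frac{d\mu(x)}{\mu(x,2^{-n})} \; \le \sum_{\substack{I \in \mathcal D_n \\ I \cap E \ne \emptyset}} \int_I \frac{d\mu(x)}{\mu(I)} \; = \; \#\{I \in \mathcal D_n : I \cap E \ne \emptyset\}.
\]
Summing in $n$ then bounds the $L^1(\mu)$ norm of the discretized sum by the dyadic arc sum $\sum_{I \cap E \ne \emptyset}|I|^2/\lambda(|I|)$, which by Lemma \ref{dyadic-interval-sums}(c) is comparable to $\|E\|_{\BC_\phi} < \infty$.

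The only delicate point is lining up scales so that the elementary inequality $\mu(x,2^{-n}) \ge \mu(I)$ for $x \in I \in \mathcal D_n$ survives the dyadic discretization without losing a logarithmic factor; the fortunate feature of the argument is that the $L^1(\mu)$ bound which emerges is, up to constants, \emph{precisely} the dyadic characterization of $\|E\|_{\BC_\phi}$ from Lemma \ref{dyadic-interval-sums}, so the Beurling-Carleson hypothesis enters at exactly the right place and nothing is wasted.
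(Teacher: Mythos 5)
Your proposal is correct, and the overall strategy is the same as the paper's: prove a.e.\ finiteness by bounding the double integral $\int_E \int_0^1 \varepsilon/(\lambda(\varepsilon)\mu(x,\varepsilon))\,d\varepsilon\,d\mu(x)$ by $\|E\|_{\BC_\phi}$ after reducing to a single Beurling-Carleson set. The difference lies in \emph{which} characterization from Lemma~\ref{dyadic-interval-sums} you connect to. The paper works with the Privalov-star integral (d): it writes $\int_{K_E}\frac{dA(z)}{\lambda(1-|z|)}$ as an integral of Stolz-angle contributions against $d\mu(\zeta)$ by introducing the weight $\eta(z)=\mu(I_z)^{-1}$ (where $I_z$ collects the vertices $\zeta$ with $z\in S(\zeta)$), then lower-bounds the slice at height $\varepsilon$ by $\varepsilon\,\mu(\zeta,3\varepsilon)^{-1}/\lambda(\varepsilon)$. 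You instead discretize in $\varepsilon$ and apply Fubini directly against the dyadic-arc-sum characterization (c), using the observation that $\mu(x,2^{-n})\ge \mu(I)$ when $x\in I\in\mathcal D_m$ for $m$ a few generations deeper than $n$, so that $\int d\mu(x)/\mu(x,2^{-n})$ telescopes to the count of dyadic arcs of that generation meeting $E$. Both routes are valid; yours is a touch more elementary (no Stolz-angle geometry, just dyadic counting), while the paper's is a touch cleaner in that the weight $\eta$ makes Fubini exact rather than producing inequalities at each scale. One small bookkeeping point in your version: the absolute constants forcing $|I|\le\varepsilon$ shift the dyadic generation by a fixed offset, and $\lambda$'s doubling (G2) applied a bounded number of times absorbs that offset into the implicit constant; you flag this as the ``delicate point,'' and it is indeed harmless, but it is worth writing explicitly.
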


\begin{proof}
It suffices to consider the case when $\mu$ is supported on a single $\phi$-Beurling-Carleson set $E$.
 Since $\mu$ is a singular measure, for $\mu$-a.e.~$x \in \partial \mathbb{D}$, $\lim_{\varepsilon \to 0} \frac{\mu(x, \varepsilon)}{\varepsilon} = \infty$.
To prove the lemma, we will show that the double integral
$$
\int_{E} \int_0^1 \frac{\varepsilon}{\lambda(\varepsilon) \mu(x, \varepsilon)} \, d\varepsilon d\mu(x) \lesssim \| E \|_{\BC_\phi}.
$$
For a point $x \in \partial \mathbb{D}$, we write $S(x)$ for the Stolz angle of opening $\pi/2$ with vertex at $x$.
Recall that $K_E$ denotes the union of the Stolz angles emanating from points $x \in E$. According to Lemma \ref{dyadic-interval-sums},
$$
 \| E \|_{\BC_\phi} \asymp \int_{K_E} \frac{dA(z)}{\lambda(1-|z|)}.
$$
We subdivide the above integral over individual Stolz angles:
$$
\int_{K_E} \frac{dA(z)}{\lambda(1-|z|)} = \int_E \int_{S(\zeta)} \eta(z) \cdot \frac{dA(z)}{\lambda(1-|z|)} d\mu(\zeta),
$$
where the function $\eta(z) = \mu(I_z)^{-1}$ measures how many Stolz angles contain $z$. Here, 
 $I_z$ is the arc of the unit circle that consists of points $\zeta$ for which $z \in S(\zeta)$.
From
\begin{align*}
\int_{S(\zeta) \cap \{1-|z| = \varepsilon\}} \eta(z) \cdot \frac{|dz|}{\lambda(1-|z|)} & \ge \frac{\varepsilon}{\lambda(\varepsilon)} \cdot \min_{z \in S(\zeta) \cap \{1-|z| = \varepsilon\}} \mu(I_z)^{-1} \\
& \ge \frac{\varepsilon \cdot \mu(\zeta, 3\varepsilon)^{-1}}{\lambda(\varepsilon) },
\end{align*}
we deduce that
$$
\int_{E} \int_0^1 \frac{\varepsilon \cdot \mu(\zeta, 3\varepsilon)^{-1}}{\lambda(\varepsilon) } d\varepsilon d\mu(\zeta) \lesssim \| E \|_{\BC_\phi}
$$
as desired.
\end{proof}

 \begin{corollary}
\label{local-behaviour2}
Suppose $\mu$ is a measure on the unit circle supported on a countable union of $\phi$-Beurling-Carleson sets. For any $c  > 0$, the region
 $$\Omega_c = \{ z \in \mathbb{D} : P_\mu(z) > c\}$$ is ``thick'' at almost every point $x$ on the unit circle with respect to $\mu$, in the sense that
\begin{equation}
\label{eq:general-thickness}
\int_0^1 \frac{\eta(x, \varepsilon)}{\varepsilon \cdot \lambda(\varepsilon)} d\varepsilon < \infty,
\end{equation}
where $\eta(x, \varepsilon) = \pi \varepsilon - |\partial B(x, \varepsilon) \cap \Omega_c|$. 
\end{corollary}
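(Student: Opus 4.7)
The plan is to reduce the corollary to Theorem~\ref{local-behaviour} via a pointwise comparison of the form $\eta(x,\varepsilon) \lesssim \varepsilon^2/\mu(x, K\varepsilon)$ valid at sufficiently small scales. Combined with the trivial monotonicity $\mu(x, K\varepsilon) \ge \mu(x, \varepsilon)$, this will reduce the desired convergence to the integral appearing in Theorem~\ref{local-behaviour}.

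The key input is a lower bound on the Poisson integral. Fix $K = 2$. For any $z \in \partial B(x,\varepsilon) \cap \mathbb{D}$ and any $\zeta \in I(x, K\varepsilon)$, the triangle inequality gives $|\zeta - z| \le (K+1)\varepsilon$, while $1 - |z|^2 \ge 1 - |z| =: s$. Restricting the Poisson integral to $I(x, K\varepsilon)$ then yields
$$
P_\mu(z) \;\gtrsim\; \frac{s \cdot \mu(x, K\varepsilon)}{\varepsilon^2}.
$$
In particular, if $z \in \partial B(x,\varepsilon) \cap \mathbb{D}$ lies in the complement of $\Omega_c$, then $s \le s_0 := C c\,\varepsilon^2/\mu(x, K\varepsilon)$ for a constant $C$ depending only on $K$.

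A short planar computation (parameterizing $z = x + \varepsilon e^{i\theta}$ with $\theta \in (\pi/2, 3\pi/2)$ and using $s \asymp \varepsilon|\cos\theta|$) shows that $\{z \in \partial B(x,\varepsilon) \cap \mathbb{D} : 1-|z| \le s_0\}$ has arc length $\lesssim s_0$ whenever $s_0 \le \varepsilon/2$. Since a countable union of $\phi$-Beurling-Carleson sets has zero Lebesgue measure, $\mu$ is singular, and hence $\mu(x,\varepsilon)/\varepsilon \to \infty$ for $\mu$-a.e.~$x$. At such an $x$ there is $\varepsilon_0 = \varepsilon_0(x) > 0$ for which $s_0 \le \varepsilon/2$ whenever $\varepsilon < \varepsilon_0$, giving the pointwise bound $\eta(x,\varepsilon) \lesssim \varepsilon^2/\mu(x, K\varepsilon)$ on this range.

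I would then split the integral in~(\ref{eq:general-thickness}) at $\varepsilon_0$. On $(0, \varepsilon_0)$ the pointwise bound together with $\mu(x, K\varepsilon) \ge \mu(x, \varepsilon)$ gives
$$
\frac{\eta(x,\varepsilon)}{\varepsilon\,\lambda(\varepsilon)} \;\lesssim\; \frac{\varepsilon}{\lambda(\varepsilon)\,\mu(x, \varepsilon)},
$$
and the right side is integrable at $\mu$-a.e.~$x$ by Theorem~\ref{local-behaviour}. The tail $\varepsilon \in [\varepsilon_0, 1]$ is handled trivially from $\eta \le \pi\varepsilon$ and the finiteness of $\int_{\varepsilon_0}^1 d\varepsilon/\lambda(\varepsilon)$, since $\lambda$ is bounded below on $[\varepsilon_0, 1]$. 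The main obstacle is the scale-splitting step, which hinges on using the singularity of $\mu$ to guarantee $s_0 \le \varepsilon/2$ at small scales; the Poisson estimate and the arc-length count are routine.
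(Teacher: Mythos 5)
Your proof is correct and takes the same approach as the paper, which states only the key inequality $\mu(x,\varepsilon)\,\eta(x,\varepsilon)\lesssim\varepsilon^2$ (when $\mu(x,\varepsilon)\ge\varepsilon$) as a one-line remark and leaves the Poisson lower bound, the arc-length count, the use of singularity to justify the scale splitting, and the reduction to Theorem \ref{local-behaviour} implicit. You have supplied exactly those details, including the harmless tail estimate $\int_{\varepsilon_0}^1 d\varepsilon/\lambda(\varepsilon)=\phi_1(\varepsilon_0)<\infty$.
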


To see the corollary, notice that if $\mu(x,\varepsilon) \ge \varepsilon$, then  $\mu(x,\varepsilon) \eta(x,\varepsilon) \lesssim \varepsilon^2$.

\begin{remark}
For usual Beurling-Carleson sets, one has $\varepsilon^2$ in the denominator of (\ref{eq:general-thickness}).
This is essentially the Rodin-Warschawski condition on the existence of a non-zero angular derivative of a Riemann map $\psi_c: \Omega_c \to \mathbb{D}$
  at $x \in \partial \Omega_c \cap \partial \mathbb{D}$, cf.~Theorem \ref{rodin-warschawski}. (If $\Omega_c$ is disconnected, then we consider the Riemann map from an appropriate connected component of $\Omega_c$.) For an application to critical values of inner functions, see \cite{IK22}.
  For $\alpha$-Beurling-Carleson sets,  the denominator of (\ref{eq:general-thickness}) is $\varepsilon^{3-\alpha}$.
\end{remark}

\section{A corona construction}
\label{sec:corona-construction}

In this section, we explore a number of conditions which guarantee that a singular measure is supported on a countable union of Beurling-Carleson sets and prove Theorems \ref{N-theory} and \ref{Hp-theory}. Our main tool is a corona-type decomposition for singular measures.

\subsection{Decomposition of singular measures}
\label{sec:corona-decomposition}

Suppose $\mu$ is a singular measure on the unit circle. 
Fix a large constant $M > 0$ and consider the following corona-type decomposition. Let $\{ I_j ^{(1)}\}$ be the maximal (closed) dyadic arcs such that
$$
\frac{\mu(I_j^{(1)})}{|I_j^{(1)}|} \ge M.
$$
In each $I_j^{(1)}$, we consider the maximal dyadic subarcs $J_k^{(1)} \subset I_j^{(1)}$ for which
$$
\frac{\mu(J_k^{(1)})}{|J_k^{(1)}|} \le \frac{M}{100}.
$$
In each $J_k^{(1)}$, we consider the maximal dyadic subarcs $I_j^{(2)} \subset J_k^{(1)}$ with
$$
\frac{\mu(I_j^{(2)})}{|I_j^{(2)}|} \ge M.
$$
Continuing in this way, we inductively define $I_j^{(m)}$ and $J_k^{(m)}$ for $m \ge 1$. 
We call the arcs $I_j^{(m)}$ {\em heavy}  
and the arcs $J_k^{(m)}$  {\em light}\,, $j,k,m \ge 1$.

Since $\mu$ is a singular measure, almost every point on the unit circle with respect to the Lebesgue measure is eventually contained in a light arc, so that
$$
\sum_{J_k^{(m)} \subset I_j^{(m)}} |J_k^{(m)}| = |I_j^{(m)}|, \qquad j, m \ge 1.
$$
From the definitions of light and heavy arcs, we have
$$
\sum_{I_j^{(m+1)} \subset J_k^{(m)}} |I_j^{(m+1)}| \, \le \, \frac{1}{M} \cdot \mu(J_k^{(m)}) \, \le \, \frac{|J_k^{(m)}|}{100}, \qquad  k, m \ge 1.
$$
It follows that $\mu$ is concentrated on
$$
\bigcup_{ I_j^{(m)} \text{ heavy }} \biggl ( {I}_j^{(m)} \setminus  \bigcup_{\light J_k^{(m)} \subset I_j^{(m)}  } \interior J_k^{(m)} \biggr ).
$$

\subsection{Proofs of Theorems \ref{N-theory} and \ref{Hp-theory}}

For convenience of the reader, we break the proofs of Theorems \ref{N-theory} and \ref{Hp-theory} into two lemmas:

\begin{lemma}
\label{area-condition-countable-unions}
{\em (i)} Let $\mu \ge 0$ be a finite singular measure on $\partial \mathbb{D}$ which satisfies
\begin{equation}
\int_{\{z \in \mathbb{D}: P_\mu(z) > c\}} \frac{dA(z)}{1-|z|} < \infty,
\end{equation}
for some $c \in \mathbb{R}$.
Then $\mu$ is concentrated on a countable union of Beurling-Carleson sets.

{\em (ii)} Let $\mu \ge 0$ be a finite singular measure on $\partial \mathbb{D}$ which satisfies
\begin{equation}
\int_{\{z \in \mathbb{D}: P_\mu(z) > c\}} \frac{dA(z)}{(1-|z|)^{1+p}} < \infty,
\end{equation}
for some $c \in \mathbb{R}$.
Then $\mu$ is concentrated on a countable union of $(1-p)$-Beurling-Carleson sets.
\end{lemma}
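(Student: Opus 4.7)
The plan is to apply the corona-type decomposition of Subsection \ref{sec:corona-decomposition} with the constant $M$ chosen much larger than $c$ (say $M > 1000\, c$), and to verify that each of the countably many sets
$$
E_j^{(m)} \, = \, I_j^{(m)} \setminus \bigcup_{\light J_k^{(m)} \subset I_j^{(m)}} \interior J_k^{(m)}
$$
is a Beurling-Carleson set in case (i), respectively a $(1-p)$-Beurling-Carleson set in case (ii). Since $\mu$ is concentrated on $\bigcup_{j,m} E_j^{(m)}$, as recorded in the previous subsection, this will finish both parts.

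To each light arc $J_k^{(m)}$ with heavy parent $I_j^{(m)}$, associate the strip
$$
T_k^{(m)} \, = \, \bigl \{ z \in \mathbb{D} \, : \, z/|z| \in J_k^{(m)}, \ |J_k^{(m)}| < 1-|z| < |I_j^{(m)}| \bigr \}.
$$
The first step is to show $P_\mu(z) > c$ throughout $T_k^{(m)}$. Given $z \in T_k^{(m)}$ with $t = 1-|z|$, let $I'$ be the dyadic ancestor of $J_k^{(m)}$ of length comparable to $t$; such an $I'$ is still contained in $I_j^{(m)}$, and by maximality of $J_k^{(m)}$ among the light subarcs of $I_j^{(m)}$, the arc $I'$ is not light, so $\mu(I') > (M/100)|I'| \gtrsim (M/100)\, t$. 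A routine Poisson kernel lower bound then yields $P_\mu(z) \gtrsim \mu(I')/t \gtrsim M/100 > c$. The second step is to note that the strips are pairwise disjoint: strips sharing a generation are angularly separated, while a strip $T_k^{(m+1)}$ sits inside some $Q(J_{k'}^{(m)})$ at heights at most $|I_j^{(m+1)}| \le |J_{k'}^{(m)}|$, below the base of $T_{k'}^{(m)}$. Together with direct integration
$$
\int_{T_k^{(m)}} \frac{dA(z)}{1-|z|} \, = \, |J_k^{(m)}| \log \frac{|I_j^{(m)}|}{|J_k^{(m)}|}, \qquad \int_{T_k^{(m)}} \frac{dA(z)}{(1-|z|)^{1+p}} \, \asymp \, \frac{|J_k^{(m)}|^{1-p}}{p},
$$
and the assumed finiteness of the area integral over $\{P_\mu > c\}$, this bounds, for each fixed heavy arc $I_j^{(m)}$, the sum $\sum_k |J_k^{(m)}| \log(|I_j^{(m)}|/|J_k^{(m)}|)$ in case (i) and $\sum_k |J_k^{(m)}|^{1-p}$ in case (ii).

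Case (ii) immediately yields $\|E_j^{(m)}\|_{\BC_{1-p}} < \infty$. The one bookkeeping subtlety I foresee is in case (i): the area integral produces the weight $\log(|I_j^{(m)}|/|J_k^{(m)}|)$ rather than the genuine Beurling-Carleson weight $\log(1/|J_k^{(m)}|)$. To close this gap, I would split
$$
\log \frac{1}{|J_k^{(m)}|} \, = \, \log \frac{|I_j^{(m)}|}{|J_k^{(m)}|} \, + \, \log \frac{1}{|I_j^{(m)}|}
$$
and use $\sum_k |J_k^{(m)}| \le |I_j^{(m)}|$ to bound the extra contribution by $|I_j^{(m)}| \log(1/|I_j^{(m)}|) < \infty$, which is harmless for a single fixed heavy arc; the few light subarcs of length comparable to $|I_j^{(m)}|$ are similarly handled separately. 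Hence each $E_j^{(m)}$ is a Beurling-Carleson set, completing the proof.
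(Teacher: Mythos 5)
Your proposal is correct and rests on the same idea as the paper's proof: run the corona-type decomposition of Section \ref{sec:corona-decomposition} with $M$ large relative to $c$, observe that $P_\mu \gtrsim M$ on a disjoint family of regions indexed by that decomposition, and use the area integral over $\{P_\mu > c\}$ to bound the Beurling-Carleson sums of each $E_j^{(m)}$. The only difference is bookkeeping: the paper invokes the dyadic-arc-sum equivalence of Lemma \ref{dyadic-interval-sums}, covering $\{P_\mu \gtrsim M\}$ by the tops $T_I$ of the dyadic arcs $I$ meeting $E_j^{(m)}$ (each of which has density $> M/100$) and getting $\sum |I| < \infty$ directly, whereas you instead carve out the vertical strips over the light arcs, which obliges you to pass from the weight $\log(|I_j^{(m)}|/|J_k^{(m)}|)$ to $\log(1/|J_k^{(m)}|)$ and to treat a few large light arcs separately. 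Your splitting $\log \frac{1}{|J|} = \log\frac{|I|}{|J|} + \log\frac{1}{|I|}$ together with $\sum_k |J_k^{(m)}| = |I_j^{(m)}|$ handles this cleanly; for case (ii) you should note the parallel caveat that the lower bound $\int_{T_k^{(m)}} (1-|z|)^{-1-p}\,dA \gtrsim |J_k^{(m)}|^{1-p}$ requires $|J_k^{(m)}| \le |I_j^{(m)}|/2$, which again excludes at most finitely many arcs and is harmless.
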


\begin{proof}
We only prove (i) as (ii) is similar. We use the decomposition from Section \ref{sec:corona-decomposition}.
 To prove the theorem, it suffices to show that for each heavy interval $I_j^{(m)}$,
$$
E = I_j^{(m)} \setminus  \bigcup_{ \light J_k^{(m)} \subset I_j^{(m)} } \interior J_k^{(m)}
$$
is a Beurling-Carleson set. By Lemma \ref{dyadic-interval-sums}, we may check that
$$
\sum_{\substack{I \dyadic \\   I \cap E \ne \emptyset}} |I| < \infty.
$$
By construction, if $I$ is a dyadic interval in $I_j^{(m)}$  which meets $E$, then $\frac{\mu(I)}{|I|} > \frac{M}{100}$ and $P_\mu(z) \gtrsim M$ for $z \in T_I$. Hence,
$$
\sum_{\substack{I \dyadic \\   I \cap E \ne \emptyset}} |I| \, \lesssim \, \int_{\{z: P_\mu(z) \gtrsim M\}} \frac{dA(z)}{1-|z|} \, < \, \infty
$$
as desired. The proof is complete.
\end{proof}

Ahern and Clark gave an elegant formula for the angular derivative of a singular inner function on the unit circle:
$$
|S'_\mu(z)| =  2 \int_{\partial \mathbb{D}} \frac{d\mu(\zeta)}{|\zeta -  z |^2}, \qquad |z|=1,
$$
where at a given point $z \in \partial \mathbb{D}$, either both quantities are finite and equal or infinite. For a proof, see \cite[Chapter 4.1]{mashreghi}.
\begin{lemma}
\label{derivatives-hp-n-area}
{\em (i)} If $S'_{\mu} \in \mathcal N$, then the area condition (\ref{eq:area-condition}) holds.

{\em (ii)}  If $S'_{\mu} \in H^p$, then the $(1+p)$-area condition (\ref{eq:area-condition2}) holds.
\end{lemma}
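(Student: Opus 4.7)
I would prove both parts by bounding the area integral by a boundary integral of $|S'_\mu|$ via the Ahern--Clark lower bound
\[
|S'_\mu(e^{i\theta})|\;\ge\;\frac{2\mu(I)}{|I|^2}, \qquad e^{i\theta}\in I \subset \partial \mathbb{D},
\]
which follows from restricting the integration in the formula just recalled to $I$. First I would rewrite $\{|S_\mu|<c\} = \{P_\mu>c'\}$ with $c'=\log(1/c)$ and discretise: by the remark after Lemma~\ref{dyadic-interval-sums},
\[
\int_{\{P_\mu>c'\}}\frac{dA(z)}{(1-|z|)^{\alpha}}\;\asymp\;\sum_{\substack{I\in\mathcal D \\ T_I\cap\{P_\mu>c'\}\ne\emptyset}}|I|^{2-\alpha},
\]
with $\alpha=1$ in~(i) and $\alpha=1+p$ in~(ii). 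This reduces each area condition to an estimate on a dyadic arc sum.

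Next, for every $I$ in the sum above I would identify a dyadic ancestor $J\supseteq I$ with large Poisson density. A dyadic-shell expansion of the Poisson kernel at a point $z\in T_I\cap\{P_\mu>c'\}$ forces at least one arc $J\supseteq I$ (of scale $|J|=2^k|I|$) to satisfy $\mu(J)/|J|\gtrsim c'$. I would then group the $I$'s by the maximal such ancestor $J^{*}$. Using the sawtooth structure from the iterated corona decomposition of Section~\ref{sec:corona-decomposition} --- which ensures that the mass assigned to deeper heavy sub-arcs inside $J^{*}$ occupies at most a fixed fraction of $|J^{*}|$ at each step --- the area sum is dominated by $\sum_{J^{*}}|J^{*}|^{2-\alpha}$, where $\{J^{*}\}$ is a pairwise disjoint family of dyadic arcs with $\mu(J^{*})/|J^{*}|\gtrsim c'$.

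Applying the Ahern--Clark bound to each $J^{*}$ gives $|S'_\mu(e^{i\theta})|\gtrsim 1/|J^{*}|$ for all $e^{i\theta}\in J^{*}$, so $\log^+|S'_\mu(e^{i\theta})|\gtrsim \log(1/|J^{*}|)$ and $|S'_\mu(e^{i\theta})|^p\gtrsim|J^{*}|^{-p}$. Integrating over the disjoint arcs $J^{*}$ yields, for case (i),
\[
\sum_{J^{*}}|J^{*}|\;\le\;\sum_{J^{*}}|J^{*}|\log\tfrac{1}{|J^{*}|}\;\lesssim\;\int_{\partial\mathbb{D}}\log^+|S'_\mu|\,d\theta\;=\;\|S'_\mu\|_{\mathcal N}\;<\;\infty,
\]
and for case (ii),
\[
\sum_{J^{*}}|J^{*}|^{1-p}\;\lesssim\;\sum_{J^{*}}\int_{J^{*}}|S'_\mu|^p\,d\theta\;\le\;\|S'_\mu\|_{H^p}^p\;<\;\infty,
\]
establishing~(\ref{eq:area-condition}) and~(\ref{eq:area-condition2}) respectively.

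The main obstacle is the reorganisation step in the middle paragraph. A top-box $T_I$ can meet $\{P_\mu>c'\}$ due to far-field contributions to the Poisson integral rather than to local mass of $\mu$, so a single maximal heavy ancestor $J^{*}$ may have many descendants $I$ at many scales, and naively summing $|I|^{2-\alpha}$ over them diverges since $2-\alpha\le 1$. The sawtooth restriction --- counting each descendant only through the fresh portion of $\{P_\mu>c'\}$ captured by its own top box, which is empty once one enters a deeper heavy box --- is what keeps the bookkeeping finite, and is the point at which the corona-type decomposition of Section~\ref{sec:corona-decomposition} is really being used.
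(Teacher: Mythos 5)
There is a genuine gap, and it is not merely a bookkeeping difficulty you can patch with the corona decomposition: your very first reduction is wrong. The asserted comparability
\[
\int_{\{P_\mu>c'\}}\frac{dA(z)}{(1-|z|)^{\alpha}}\;\asymp\;\sum_{\substack{I\in\mathcal D \\ T_I\cap\{P_\mu>c'\}\ne\emptyset}}|I|^{2-\alpha}
\]
does \emph{not} follow from the remark after Lemma \ref{dyadic-interval-sums}, which equates different area integrals over Privalov-type regions built from a fixed boundary set $E$; it says nothing about arbitrary open subsets of the disk. Only the inequality ``$\le$'' holds here, and it is strictly lossy: a top box $T_I$ that meets $\Omega_{c'}$ in an arbitrarily small piece is counted with full weight $|I|^{2-\alpha}$. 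Take $\mu=\delta_x$ to see how bad this is: $\Omega_{c'}$ is a horodisk at $x$, the left-hand area integral is finite (and indeed both $S'_\mu\in\mathcal N$ and $S'_\mu\in H^p$ for $p<1/2$), yet every dyadic ancestor $I\ni x$ of every scale has $T_I\cap\Omega_{c'}\ne\emptyset$, so the right-hand sum is $\sum_{I\ni x}|I|^{2-\alpha}=+\infty$ since $2-\alpha\le 1$. You are thus trying to prove finiteness of a quantity by dominating it by one that is generically infinite, and no amount of regrouping by heavy ancestors can repair that — you acknowledge the danger in your last paragraph but the proposed ``sawtooth restriction'' is a description of a hoped-for cancellation, not an argument, and the corona decomposition of Section \ref{sec:corona-decomposition} was built for the opposite implication $(2)\Rightarrow(3)$ and does not supply it.

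The paper's proof avoids all of this and is much shorter. It uses a different Ahern--Clark estimate than the one you invoke: not the pointwise lower bound $|S'_\mu(e^{i\theta})|\gtrsim\mu(I)/|I|^2$, but the radial monotonicity $|S'_\mu(re^{i\theta})|\le 4|S'_\mu(e^{i\theta})|$ for $0<r<1$, valid whenever the angular derivative at $e^{i\theta}$ is finite. This gives $1-|S_\mu(re^{i\theta})|\le 4|S'_\mu(e^{i\theta})|(1-r)$, hence the entire radial slice $\Omega_c\cap[0,e^{i\theta}]$ is confined to $[0,(1-\varepsilon/|S'_\mu(e^{i\theta})|)e^{i\theta}]$. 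Integrating in polar coordinates then produces $\int_{\partial\mathbb D}\log^+|S'_\mu|\,d\theta$ in case (i) and $\varepsilon^{-p}\int_{\partial\mathbb D}|S'_\mu|^p\,d\theta$ in case (ii), both finite by hypothesis. No dyadic discretization, no corona decomposition, no regrouping is needed.
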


\begin{proof}
Observe that 
$$
\Omega_c = \{z \in \mathbb{D}: P_\mu(z) > c\} = \{z \in \mathbb{D} : |S_{\mu}(z)| < e^{-c}\}.
$$
Let $e^{i\theta} \in \partial \mathbb{D}$ be a point at which $S_{\mu}$ has a finite angular derivative. According to a well known result of Ahern and Clark
\cite[Theorem 4.15]{mashreghi},
$$
|S'_\mu(re^{i\theta})| \le 4 |S'_\mu(e^{i\theta})|, \qquad 0 < r < 1.
$$ 
Let $[0,e^{i\theta}]$ denote the radial line segment from the origin to $e^{i\theta}$.
As $1- |S_\mu(re^{i\theta})| \le 4|S'_\mu(e^{i\theta})|(1-r)$,
$$\Omega_c \cap [0, e^{i\theta}] \subset \biggl [0, \biggl (1 - \frac{\varepsilon}{|S'_{\mu}(e^{i\theta})|} \biggr ) \cdot e^{i\theta} \biggl ],
$$
where $\varepsilon > 0$ is a constant that depends on $c$. From this bound on $\Omega_c$, (i) and (ii) follow quite easily.
\end{proof}

\section{Derivative in Hardy spaces}
\label{sec:derivative-hardy}

In this section, we explore conditions on a singular measure $\mu$ involving  Beurling-Carleson sets that guarantee the membership of $S'_\mu$ in $H^p$. We show:

\begin{theorem}
\label{derivative-in-hardy}
Fix $0 < p < 1/2$. Let  $\mu$ be a positive measure supported on a closed set $E \subset \partial \mathbb{D}$ of zero length whose complementary arcs $\{J\}$ satisfy 
\begin{equation}
\label{eq:derivative-in-hardy}
\sum |J|^{1-q} < \infty
\end{equation}
for some $q > \frac{p}{1-p}$. Then, $S'_\mu \in H^p$. 
\end{theorem}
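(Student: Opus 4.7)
The goal is to show $\int_{\partial\mathbb{D}}|S'_\mu(e^{i\theta})|^p\,d\theta<\infty$. My plan is to use the Ahern--Clark identity recalled at the end of Section~3,
\[|S'_\mu(e^{i\theta})|=2\int_E\frac{d\mu(\zeta)}{|e^{i\theta}-\zeta|^2}\quad\text{for a.e.\ }\theta,\]
split the domain $\partial\mathbb{D}\setminus E=\bigsqcup_n J_n$ along the complementary arcs of $E$, bound $\int_{J_n}|S'_\mu|^p\,d\theta$ one arc at a time, and sum by H\"older's inequality with conjugate exponents $1/p$ and $1/(1-p)$. The two inputs driving the estimate are the finite total mass $M=\mu(E)<\infty$ and the assumed $\sum_n|J_n|^{1-q}<\infty$ for some $q>p/(1-p)$.

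On each $J_n$ with centre $c_n$, the plan is to split $\mu=\mu_n+\mu_n^*$ with $\mu_n=\mu|_{3J_n}$ and factor $S_\mu=S_{\mu_n}\,S_{\mu_n^*}$; since $|S_{\mu_n}|,|S_{\mu_n^*}|\le 1$, the product rule yields $|S'_\mu|\le|S'_{\mu_n}|+|S'_{\mu_n^*}|$ and subadditivity of $t\mapsto t^p$ for $p\le 1$ gives $|S'_\mu|^p\le|S'_{\mu_n}|^p+|S'_{\mu_n^*}|^p$. For the local piece, the trivial bound $|S'_{\mu_n}(\theta)|\le 2\mu(3J_n)/\sigma(\theta)^2$ together with $\int_{J_n}\sigma(\theta)^{-2p}\,d\theta\le C|J_n|^{1-2p}$ (finite since $p<1/2$) yields $\int_{J_n}|S'_{\mu_n}|^p\,d\theta\le C\mu(3J_n)^p|J_n|^{1-2p}$. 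H\"older then gives
\[\sum_n\mu(3J_n)^p|J_n|^{1-2p}\le\Bigl(\sum_n\mu(3J_n)\Bigr)^p\Bigl(\sum_n|J_n|^{(1-2p)/(1-p)}\Bigr)^{1-p}\le CM^p\Bigl(\sum_n|J_n|^{1-q_0}\Bigr)^{1-p},\]
where $q_0=p/(1-p)$. Bounded overlap of $\{3J_n\}$ gives $\sum\mu(3J_n)\le 3M$, and since the hypothesis provides $q>q_0$ with $|J_n|\le 1$, $\sum|J_n|^{1-q_0}\le\sum|J_n|^{1-q}<\infty$. The local contribution is therefore finite.

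For the non-local term, the inequality $|\theta-\zeta|\ge\tfrac12|\zeta-c_n|$ valid for $\theta\in J_n$ and $\zeta\notin 3J_n$ produces
\[|S'_{\mu_n^*}(\theta)|\le 8\int_{E\setminus 3J_n}\frac{d\mu(\zeta)}{|\zeta-c_n|^2}=:K_n,\]
a constant in $\theta$, whence $\int_{J_n}|S'_{\mu_n^*}|^p\,d\theta\le|J_n|K_n^p$. The remaining sum $\sum_n|J_n|K_n^p$ will be controlled by refusing any pointwise bound on $K_n$; instead one swaps the order of summation in $n$ and integration in $\zeta$ via Fubini, producing an integral of a kernel-type sum against $d\mu(\zeta)$, and then uses the finiteness of $\int_{\partial\mathbb{D}}dx/|x-\zeta|^{2p}$ (valid exactly for $p<1/2$) together with the BC condition on $E$ to close the estimate. \emph{Main obstacle:} this non-local step, where the naive bound $K_n\le CM/|J_n|^2$ would reduce to $\sum|J_n|^{1-2p}$ and demand the strictly stronger $(1-2p)$-BC condition --- since $(1-2p)/(1-p)>1-2p$ for $p\in(0,1/2)$ --- forcing one to keep the integral form of $K_n$ and pair the $p$-power with the integrable kernel $|x-\zeta|^{-2p}$ rather than collapsing it to a constant.
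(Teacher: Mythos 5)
Your opening moves are on the right track and agree in spirit with the paper's Lemma~\ref{hp-test}: reduce to an estimate for $\int_{J_n}|S'_\mu|^p\,d\theta$ arc by arc, and exploit the near-constancy of the kernel $|e^{i\theta}-\zeta|^{-2}$ on $J_n$ for $\zeta$ far from $J_n$. However, there are two genuine gaps, one of which you flag and one of which you do not.

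\textbf{The ``bounded overlap'' claim is false.} You assert that $\{3J_n\}$ has bounded overlap and deduce $\sum_n\mu(3J_n)\le 3M$. This fails whenever a point of $E$ is a geometric accumulation point of complementary arcs. Take for instance $E=\{0\}\cup\{\rho^k:k\ge 0\}$ (viewed inside $\partial\mathbb{D}$) with a fixed $\rho<1/2$, so $J_k=(\rho^{k+1},\rho^k)$ and $|J_k|=\rho^k(1-\rho)$; then $0\in 3J_k$ for every $k$, so if $\mu$ has an atom at $0$ one gets $\sum_n\mu(3J_n)=\infty$, while $\sum_n|J_n|^{1-q}<\infty$ for every $q<1$, well within the hypotheses of the theorem. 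Your H\"older inequality then yields only the vacuous bound $\infty^p\cdot(\text{finite})$. (The quantity $\sum_n\mu(3J_n)^p|J_n|^{1-2p}$ you are after \emph{is} finite in this example --- it is $\approx\sum_k\rho^{k(1-2p)}$ --- but you cannot see this through the bounded-overlap route.)

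\textbf{The non-local term is not actually handled.} You acknowledge that the naive bound $K_n\lesssim M/|J_n|^2$ is too lossy and propose ``swapping sum and $\mu$-integral via Fubini.'' That step is not available: $K_n^p$ is a $p$-th power of an integral, not an integral, so it is not linear in $\mu$ and the sum over $n$ cannot be pushed inside $d\mu(\zeta)$. Bounding $K_n^p$ by $\int d\mu(\zeta)/|\zeta-c_n|^{2p}$ to invoke the convergence of $\int dx/|x-\zeta|^{2p}$ is not valid either --- for $p<1$ Jensen's inequality runs in the \emph{opposite} direction, so $(\int f\,d\nu)^p\ge\int f^p\,d\nu$ for probability measures $\nu$, and no monotone comparison holds in general.

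\textbf{What the paper does instead.} Both difficulties disappear if you do not split $\mu$ into local and non-local parts at all. The paper first proves (Lemma~\ref{hp-test}) that $S'_\mu\in H^p$ if and only if $\sum_J u(z_J)^p|J|^{1-p}<\infty$ where $u=P_\mu$; the quantity $u(z_J)$ automatically carries both the near and far mass. The single new ingredient is then that the nontangential maximal function of a \emph{positive harmonic} function is in $L^\delta$ for every $\delta<1$, which gives $\sum_J u(z_J)^\delta|J|<\infty$ for all $\delta<1$. One application of H\"older with exponents $\delta/p$ and $\delta/(\delta-p)$ yields
\[
\sum_J u(z_J)^p|J|^{1-p}\le\Bigl(\sum_J u(z_J)^\delta|J|\Bigr)^{p/\delta}\Bigl(\sum_J|J|^{1-\delta p/(\delta-p)}\Bigr)^{(\delta-p)/\delta},
\]
and as $\delta$ ranges over $(p,1)$ the exponent $q=\delta p/(\delta-p)$ ranges over $(p/(1-p),\infty)$, which is exactly the range in the theorem; the strict inequality $q>p/(1-p)$ is forced by the strictness of $\delta<1$. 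This is the step your proposal is missing.
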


We will give two examples that show that the exponent $\frac{p}{1-p}$ in the theorem above is sharp. Theorem \ref{derivative-in-hardy} improves a result of M.~Cullen \cite{cullen} who showed that $S'_{\mu} \in H^p$ under the stronger hypothesis $q = 2p$.

\subsection[When is S' in Hp?]{When is $S'_\mu \in H^p$?}

We begin by giving a simple criterion for a singular inner function to have derivative in $H^p$. As is standard, for an arc $J$ on the unit circle with $|J| \le 1$, we write
 $z_J = (1-|J|/2) \cdot e^{i \theta_J}$ where  $e^{i \theta_J}$ is the midpoint of $J$.
For $0 < \beta < 1/|J|$, we write $\beta J$ for the arc of length $|\beta J|$ with the same midpoint as $J$.

\begin{lemma}
\label{hp-test}
Fix $0 < p < 1/2$. Suppose $E \subset \partial \mathbb{D}$ is a closed set of zero length and $\{ J \}$ be its complementary arcs. For a positive  measure $\mu$ supported on $E$, we have  $S'_{\mu} \in H^p$ if and only if
\begin{equation}
\label{eq:hp-test}
\sum u(z_J)^p |J|^{1-p} < \infty,
\end{equation}
where $u$ is the Poisson integral of $\mu$.
\end{lemma}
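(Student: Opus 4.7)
The plan is to prove the stronger two-sided estimate $\|S'_\mu\|_{H^p}^p \asymp \sum_J u(z_J)^p |J|^{1-p}$, which immediately yields the iff statement. First, I reduce to a boundary integral using the tools already invoked in Lemma \ref{derivatives-hp-n-area}: combining the Ahern-Clark identity $|S'_\mu(e^{i\theta})| = 2\int d\mu(\zeta)/|\zeta-e^{i\theta}|^2$ with the pointwise domination $|S'_\mu(re^{i\theta})| \leq 4|S'_\mu(e^{i\theta})|$ gives
$$\|S'_\mu\|_{H^p}^p \asymp \int_{\partial \mathbb{D}} |S'_\mu(e^{i\theta})|^p d\theta = \sum_J \int_J |S'_\mu|^p d\theta,$$
the last equality since $|E|=0$.

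The basic geometric fact driving the analysis is that for $e^{i\theta}$ in the middle half of an arc $J$ and any $\zeta \in E$, both $|\zeta - e^{i\theta}|$ and $|\zeta - z_J|$ are comparable to $|J| + \dist(\zeta, J)$. Combined with $1-|z_J|^2 \asymp |J|$, this yields $|S'_\mu(e^{i\theta})| \asymp u(z_J)/|J|$ on the middle half. Integrating immediately gives $\int_J |S'_\mu|^p d\theta \gtrsim u(z_J)^p |J|^{1-p}$, and summing over $J$ supplies the easy ``only if'' direction.

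For the reverse inequality, the obstacle is that $|S'_\mu|$ may blow up at the endpoints of $J$. I would perform a Whitney decomposition of each endpoint quarter of $J$ into sub-arcs $J_k$ of length and distance $\asymp 2^{-k}|J|$ from the nearer endpoint $\ell$, and split $\mu = \mu^{\mathrm{far}}_k + \mu^{\mathrm{near}}_k$ according to proximity to $\ell$ at scale $2^{-k}|J|$. For the far piece, the same midpoint argument applied at scale $2^{-k}|J|$ gives $|S'_{\mu^{\mathrm{far}}_k}(e^{i\theta})| \asymp u^{\mathrm{far}}(w_k)/(2^{-k}|J|)$ for an interior point $w_k$ above $J_k$; boundary Harnack (using $u^{\mathrm{far}} = 0$ on $J$) yields $u^{\mathrm{far}}(w_k) \asymp 2^{-k} u(z_J)$, so $|S'_{\mu^{\mathrm{far}}_k}| \asymp u(z_J)/|J|$ uniformly, matching the middle. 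For the near piece of total mass $m_k$, the direct Ahern-Clark estimate yields $\int_{J_k} |S'_{\mu^{\mathrm{near}}_k}|^p d\theta \lesssim m_k^p (2^{-k}|J|)^{1-2p}$, and since the Poisson kernel at $z_J$ forces $u(z_J) \gtrsim m_k/|J|$, the geometric sum $\sum_k 2^{-k(1-2p)}$ (finite precisely when $p<1/2$) absorbs this into $(1-2p)^{-1}\, u(z_J)^p |J|^{1-p}$. Combining the two pieces via $(a+b)^p \leq a^p + b^p$ and summing over $J$ completes the upper bound.

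The main obstacle is the upper bound near the endpoints. A naive subadditivity bound applied globally to $\int d\mu/|\zeta - e^{i\theta}|^2$ is too loose, as one sees on simple examples with evenly-spaced atoms, where it produces $\sum \mu_k^p$ instead of $\sum u(z_J)^p |J|^{1-p}$. The Whitney decomposition coupled with boundary Harnack is what correctly redistributes the endpoint singularity of $|S'_\mu|$ to the right scale. The hypothesis $p < 1/2$ enters through two routes simultaneously: integrability of $x^{-2p}$ near $0$ and summability of $2^{-k(1-2p)}$ over the Whitney scales.
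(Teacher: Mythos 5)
Your lower bound (the ``only if'' direction via the midpoint argument on the middle half of $J$) is correct and essentially the paper's argument. The necessity direction is fine.

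The upper bound has a genuine error in the far-piece analysis. You claim that ``boundary Harnack (using $u^{\mathrm{far}}=0$ on $J$) yields $u^{\mathrm{far}}(w_k)\asymp 2^{-k}u(z_J)$,'' but this is false. The point $w_k$ sits above the Whitney arc $J_k$ near the endpoint $\ell$, not above the center of $J$, and the support of $\mu^{\mathrm{far}}_k$ is allowed to come as close as $\sim 2^{-k}|J|$ to $\ell$. Take for instance $\mu=\delta_\zeta$ with $\zeta\in E$ at distance $2^{-k}|J|$ from $\ell$ on the far side: then $\mu^{\mathrm{far}}_k=\mu$, $u^{\mathrm{far}}(w_k)\asymp \dfrac{2^{-k}|J|}{(2^{-k}|J|)^2}=\dfrac{2^{k}}{|J|}$ while $u(z_J)\asymp \dfrac{1}{|J|}$, so $u^{\mathrm{far}}(w_k)\asymp 2^{k}u(z_J)$, off from your claim by a factor $4^{k}$. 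Boundary Harnack cannot transfer from height $2^{-k}|J|$ up to height $|J|$ here because the measure already lives at scale $2^{-k}|J|$; the comparison is valid only below that scale. As a result your claimed uniform bound $|S'_{\mu^{\mathrm{far}}_k}|\asymp u(z_J)/|J|$ on $J_k$ fails, and the far-piece sum is not controlled as you assert.

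The paper sidesteps this entirely: for $z\in J_k$, $|S'_\mu(z)|\asymp u(z_{J_k})/|J_k|$, and ordinary (interior) Harnack along the chain from $z_{J_k}$ to $z_J$ gives $u(z_{J_k})\lesssim \dfrac{|J|}{|J_k|}\,u(z_J)$. This yields $|S'_\mu|\lesssim 4^{|k|}u(z_J)/|J|$ on $J_k$, hence $\int_{J_k}|S'_\mu|^p\lesssim 2^{-|k|(1-2p)}\,u(z_J)^p|J|^{1-p}$, which is summable in $k$ precisely when $p<1/2$. That is the correct way the endpoint singularity gets absorbed, and it makes your near/far split unnecessary: your near-piece estimate (which is correct) already captures the right scale $m_k^p(2^{-k}|J|)^{1-2p}$, and the far piece is handled by the same $4^{|k|}$ loss, not by a bogus gain of $2^{-k}$. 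If you replace your boundary Harnack step with this ordinary Harnack bound you recover exactly the paper's proof.
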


\begin{proof}
Differentiation shows that $S'_{\mu}(z) = h(z) S_{\mu}(z)$, where
$$
h(z) \, = \,
 \int_E \frac{-2\zeta}{(\zeta - z)^2} d\mu(\zeta)
  \, = \, - \int_{E} \frac{2\zeta }{|\zeta - z |^2}  \biggl (  \frac{\overline{\zeta}-\overline{z}}{\zeta-z} \biggr ) d\mu(\zeta).
$$
Notice that if $z/|z| \in J/2$, $|z| \ge 1 - |J|/4$ and $\zeta \in E$, then the quantity
$$
\zeta \cdot  \frac{\overline{\zeta}-\overline{z}}{\zeta-z} = \frac{1-\overline{z}\zeta}{\zeta-z}
$$
is constrained in a sector of aperture strictly less than $\pi$. This tells us that
$$
|h(z)| \, \asymp \, \int_E \frac{d\mu(\zeta)}{|\zeta -  z |^2}
\, \asymp \, \int_E \frac{d\mu(\zeta)}{|\zeta - z_J|^2} 
\, \asymp \, \frac{u(z_J)}{|J|}.
$$
We see that
$$
\int_{J/2} |S'_{\mu}(z)|^p \, |dz| \asymp u(z_J)^p |J|^{1-p},
$$
so the condition (\ref{eq:hp-test}) is necessary for $S'_\mu \in H^p$.

To prove the converse implication, we split $J = \bigcup_{k \in \mathbb{Z}} J_k$ into countably many Whitney arcs such that
$$|J_k| \asymp \dist(J_k, \partial \mathbb{D} \setminus J) \asymp 2^{-|k|}|J|.$$
For $z \in J_k$, we have
$$
| S'_{\mu}(z) | \, = \, 2 \int_E \frac{d\mu(\zeta)}{|\zeta - z|^2}
\, \asymp \, \frac{u(z_{J_k})}{|J_k|}.
$$
 By Harnack's inequality,
$$
 \frac{|J_k|}{|J|} \, \lesssim \, \frac{u(z_{J_k})}{u(z_J)} \, \lesssim \, \frac{|J|}{|J_k|}.
$$
Therefore,
\begin{align*}
\int_J |S'_{\mu}(z)|^p \,|dz| & \lesssim  \sum_k  |J_k| \cdot \frac{u(z_{J_k})^p}{|J_k|^{p}} \\
& \lesssim  u(z_J)^p |J|^{p} \sum_k  |J_k|^{1-2p} \\
& \asymp u(z_J)^p |J|^{1-p}.
\end{align*}
Summing over $J$ shows that $S'_{\mu} \in H^p$.
\end{proof}

With help of Lemma \ref{hp-test}, the proof of Theorem \ref{derivative-in-hardy} runs as follows:

\begin{proof}[Proof of Theorem \ref{derivative-in-hardy}]
Let $u$ be the Poisson integral of $\mu$. Since $u$ is a positive harmonic function, its non-tangential maximal function is in $L^\delta$ for any $\delta < 1$. In particular, for any $\delta < 1$, we have
$$
\sum_J u(z_J)^{\delta} |J| < \infty.
$$
Applying H\"older's inequality with exponents $\delta/p > 1$ and $\delta/(\delta-p) > 1$, we obtain
\begin{align*}
\sum_J u(z_J)^p |J|^{1-p} & = \sum_J u(z_J)^p |J|^{\frac{p}{\delta}}  \, \cdot \, |J|^{\frac{\delta-p}{\delta}-p} \\ 
& \le  \biggl ( \sum_J u(z_J)^{\delta} |J| \biggr)^{\frac{p}{\delta}} 
\biggl ( \sum_J |J|^{1-\frac{\delta p}{\delta - p}}\biggr )^{\frac{\delta-p}{\delta}}.
\end{align*}
Choosing $\delta \in (p, 1)$ so that $\frac{ \delta p }{\delta - p} = q$ gives 
$$
\sum_J u(z_J)^p |J|^{1-p} < \infty,
$$
which implies that $S'_{\mu} \in H^p$ by Lemma \ref{hp-test}. Note that as $\delta$ varies over $(p, 1)$, $q = \frac{ \delta p }{\delta - p} = \frac{1}{1/p - 1/\delta}$ varies over $(\frac{p}{1-p},\infty)$.
\end{proof}

Next, we extend Theorem \ref{derivative-in-hardy} to inner functions:

\begin{corollary}
Fix $0 < p < 1/2$. Let $E \subset \partial \mathbb{D}$ be a closed set of zero length whose complementary arcs $\{ J \}$ satisfy
$$
\sum |J|^{1-q} < \infty,
$$
for some $q > \frac{p}{1-p}$. Let $F$ be an inner function whose singular part is supported on $E$ and whose zeros are contained in $K_E$. Then $F' \in H^p$.
\end{corollary}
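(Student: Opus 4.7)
My plan is to reduce the general case to the singular-measure case already handled in Theorem \ref{derivative-in-hardy}. Write $F = B \cdot S_\mu$, where $B$ is the Blaschke product formed from the zeros $\{a_k\} \subset K_E$ of $F$ and $S_\mu$ is the singular inner factor (so $\mu$ is supported on $E$). Since $|B|, |S_\mu| \le 1$ on $\mathbb{D}$ and both have unimodular radial limits almost everywhere, the product rule $F' = B' S_\mu + B S_\mu'$ gives $|F'| \le |B'| + |S_\mu'|$ almost everywhere on $\partial \mathbb{D}$. Thus it suffices to verify separately that $B', S_\mu' \in H^p$; the latter follows immediately from Theorem \ref{derivative-in-hardy} applied to $\mu$.

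To handle $B'$, I will compare $B$ to a surrogate singular inner function whose measure lives on $E$. For each zero $a_k \in K_E$, pick a point $\zeta_k \in E$ in whose Stolz cone $a_k$ sits; by the geometry of Stolz cones of fixed opening $\pi/2$, one has $|\zeta_k - a_k| \asymp 1 - |a_k|$. Define the atomic measure
\[
\tilde\mu = \sum_k (1-|a_k|)\,\delta_{\zeta_k},
\]
which is a positive finite measure supported on $E$, finiteness being the Blaschke condition $\sum (1-|a_k|) < \infty$. Theorem \ref{derivative-in-hardy} applies to $\tilde\mu$ since the complementary arcs of $E$ satisfy $\sum |J|^{1-q} < \infty$ by hypothesis, yielding $S_{\tilde\mu}' \in H^p$.

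The heart of the argument is then the pointwise comparison
\[
|B'(e^{i\theta})| \,\le\, \sum_k \frac{1-|a_k|^2}{|e^{i\theta}-a_k|^2} \,\lesssim\, \sum_k \frac{1-|a_k|}{|\zeta_k - e^{i\theta}|^2} \,\asymp\, |S_{\tilde\mu}'(e^{i\theta})|,
\]
claimed to hold almost everywhere on $\partial\mathbb{D}$. The first inequality is the standard bound coming from $B'/B = \sum_k b_{a_k}'/b_{a_k}$; the last step is the Ahern-Clark formula for $|S_{\tilde\mu}'|$ quoted in the excerpt. The middle inequality reduces, term by term, to the geometric estimate $|e^{i\theta} - \zeta_k| \lesssim |e^{i\theta}-a_k|$, which I verify by a short case split: when $|e^{i\theta}-\zeta_k| \ge 2C(1-|a_k|)$ a triangle inequality gives $|e^{i\theta}-\zeta_k| \asymp |e^{i\theta}-a_k|$, while when $|e^{i\theta}-\zeta_k| < 2C(1-|a_k|)$ the universal lower bound $|e^{i\theta}-a_k| \ge 1-|a_k|$ yields the desired inequality directly.

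The only technical point is this geometric comparison of distances; once it is in place, the conclusion $\int_{\partial\mathbb{D}} |B'|^p\,d\theta \lesssim \|S_{\tilde\mu}'\|_{H^p}^p < \infty$ is immediate, and the corollary follows. I do not anticipate a serious obstacle, since the whole argument is modular: the only nontrivial input beyond Theorem \ref{derivative-in-hardy} is the elementary projection of the zero set onto $E$ via Stolz cones.
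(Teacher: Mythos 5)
Your core idea coincides with the paper's: project the zeros of the Blaschke factor onto nearby points of $E$ and dominate the relevant derivative by the derivative of a surrogate singular inner function with atomic measure on $E$, to which Theorem~\ref{derivative-in-hardy} applies. The organisational difference is that you factor $F = B\,S_\mu$ and treat the two factors separately (using $|F'| \le |B'| + |S_\mu'|$ on $\mathbb{D}$), whereas the paper replaces \emph{all} of $F$ by a finite Blaschke product with zeros in $K_E$ via an approximation argument and runs the projection once.

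There is a gap at the final step for $B$. You establish the estimate $|B'(e^{i\theta})| \lesssim |S_{\tilde\mu}'(e^{i\theta})|$ only almost everywhere on the circle, and from $\|S_{\tilde\mu}'\|_{H^p} < \infty$ you infer $\int_{\partial\mathbb{D}} |B'(e^{i\theta})|^p \, d\theta < \infty$; but for a general holomorphic function on $\mathbb{D}$, finiteness of the boundary integral does \emph{not} imply membership in $H^p$ (think of $\exp\bigl(\tfrac{1+z}{1-z}\bigr)$, unimodular a.e.\ on $\partial\mathbb{D}$ yet in no $H^p$). One cannot bypass this by pushing the comparison into the disk either, because $|S_{\tilde\mu}'(z)| \le |S_{\tilde\mu}(z)|\,|h(z)|$ can be exponentially small near $\supp\tilde\mu$, so the interior bound $|B'(z)| \le \sum_k \tfrac{1-|a_k|^2}{|z-a_k|^2}$ does not dominate $|S_{\tilde\mu}'(z)|$ pointwise in $\mathbb{D}$. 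The paper's phrase ``by an approximation argument, we can assume $F$ is a finite Blaschke product'' is precisely what closes this hole: for a finite Blaschke product $B_n$ the derivative is continuous up to the boundary, so $\|B_n'\|_{H^p}^p = \int_{\partial\mathbb{D}} |B_n'|^p\,d\theta \lesssim \|S_{\tilde\mu}'\|_{H^p}^p$ with a constant independent of $n$, and one passes to the limit by normal families (or Fatou). You should insert the same step for your $B$ (e.g.\ take partial products $B_n \to B$), or alternatively invoke the Ahern--Clark/Protas theorem that for an inner function $\Theta$ with $\Theta' \in \mathcal N$ the boundary integral controls the $H^p$-norm; as written, ``the corollary follows'' is not yet justified.
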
 

\begin{proof}
By an approximation argument, we can assume that $F$ is a finite Blaschke product with zeros $\{z_n\} \subset K_E$. For each zero $z_n$ of $F$, pick a point $z_n^*$ in $E$ that is closest to $z_n$. Then,
$$
|F'(e^{i\theta})| \, = \, \sum \frac{1-|z_n|^2}{|e^{i\theta} - z_n|^2} \lesssim \,  \sum \, \frac{1-|z_n|^2}{|e^{i\theta} - z_n^*|^2} \, = \, \frac{|S_{\sigma}'(e^{i\theta})|}{2}, 
\quad e^{i\theta} \in  \partial \mathbb{D} \setminus E,
$$
where $\sigma = \sum (1 - |z_n|^2) \delta_{z_n^*}$. From Theorem \ref{derivative-in-hardy}, we know that $S'_\sigma \in H^p$, and by the above equation, $F' \in H^p$ as well.
\end{proof}

\subsection{Sharpness}

We now give two examples showing that the exponent in Theorem \ref{derivative-in-hardy} is sharp:

\begin{lemma} There exists a measure $\mu$ supported on a closed set $E$ of zero length whose complementary arcs $\{J\}$ satisfy
$
\sum |J|^{1 - \frac{p}{1-p}} < \infty
$
yet $S'_{\mu} \notin H^p$. 
\end{lemma}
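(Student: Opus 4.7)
The plan is to take $E$ to be a generalized Cantor subset of $\partial\mathbb{D}$ whose gap-sizes sit exactly at the border of the $\beta$-Beurling–Carleson condition (where $\beta = (1-2p)/(1-p) \in (0,1)$) and equip it with its natural self-similar measure, so that the Hardy criterion of Lemma \ref{hp-test} fails logarithmically. Set $\delta = 1/(1-2p)$, noting $\delta\beta = 1/(1-p) > 1$ and $\delta(1-2p) = 1$. Start with a single subarc $I_0 \subset \partial\mathbb{D}$; having built $2^n$ arcs of length $\ell_n$ at stage $n$, remove from the centre of each a sub-arc of length $g_{n+1} = c \cdot 2^{-(n+1)/\beta}(n+1)^{-\delta}$, where $c > 0$ is chosen small enough that $g_{n+1} < \ell_n$ at every stage. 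A short recursion yields $\ell_n \asymp 2^{-n}$; since $1/\beta > 1$ we have $g_n \ll 2^{-n} \lesssim \ell_n$, so the construction is legitimate. Let $E = \bigcap_n E_n$ and let $\mu$ be the self-similar measure assigning mass $2^{-n}$ to each stage-$n$ arc.

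I would then verify the $\beta$-Beurling–Carleson condition. Stage $n+1$ introduces $2^n$ new complementary gaps of length $g_{n+1}$, contributing
\[
2^n \cdot g_{n+1}^\beta \asymp 2^n \cdot 2^{-(n+1)}(n+1)^{-\delta\beta} \asymp (n+1)^{-1/(1-p)},
\]
so $\sum_J |J|^\beta$ converges since $1/(1-p) > 1$. The finitely many arcs of $\partial\mathbb{D} \setminus I_0$ contribute a bounded extra amount and do not affect this.

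To see $S'_\mu \notin H^p$, invoke Lemma \ref{hp-test} and estimate $u(z_J)$ from below for a stage-$n$ gap $J$. Because $\ell_n \gg g_n$, the stage-$(n+1)$ sub-arc of the adjacent stage-$n$ interval nearest $J$ lies within distance $\asymp g_n$ of the midpoint $z_J$ and carries mass $2^{-(n+1)}$; since $1 - |z_J| \asymp g_n$, this single sub-arc already gives $u(z_J) \gtrsim 2^{-n}/g_n$. Hence
\[
u(z_J)^p |J|^{1-p} \gtrsim (2^{-n}/g_n)^p \cdot g_n^{1-p} = 2^{-np} g_n^{1-2p},
\]
and summing over the $2^{n-1}$ stage-$n$ gaps, using $(1-2p)/\beta = 1-p$, produces a stage-$n$ contribution
\[
\sum_{\text{stage-}n \text{ gaps}} u(z_J)^p|J|^{1-p} \;\gtrsim\; 2^{n(1-p)-1}\, g_n^{1-2p} \;\asymp\; n^{-\delta(1-2p)} \;=\; n^{-1}.
\]
Summing in $n$ diverges, whence $\sum_J u(z_J)^p|J|^{1-p} = \infty$ and $S'_\mu \notin H^p$ by Lemma \ref{hp-test}.

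The main obstacle is calibrating the two series to land on opposite sides of the convergence threshold: this forces the precise choice $\delta \in (1/\beta, 1/(1-2p)]$, and at the endpoint $\delta = 1/(1-2p)$ both expressions become harmonic-type, which is what makes the exponent in Theorem \ref{derivative-in-hardy} sharp. A secondary technical point is the lower bound on $u(z_J)$ for a genuine Cantor measure (as opposed to a uniformly distributed one), but once the nearest stage-$(n+1)$ sub-arc is singled out using $g_n \ll \ell_n$, that estimate is immediate and no other scales need to be examined.
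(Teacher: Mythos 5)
Your construction does not work, for two related reasons.

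\textbf{The set $E$ has positive Lebesgue measure.} You build a nested family of $2^n$ arcs of length $\ell_n$, with $\ell_{n+1} = (\ell_n - g_{n+1})/2$, so $2^n\ell_n = \ell_0 - \sum_{k=1}^n 2^{k-1}g_k$. Since $g_k \asymp 2^{-k/\beta}k^{-\delta}$ with $1/\beta > 1$, the series $\sum 2^{k-1}g_k$ converges, and for $c$ small its sum is strictly less than $\ell_0$. Hence $|E| = \lim_n 2^n\ell_n > 0$: you have a ``fat'' Cantor set, contradicting the hypothesis that $E$ have zero length. As a consequence the Cantor probability measure $\mu$ (mass $2^{-n}$ on arcs of length $\asymp 2^{-n}$) is comparable to arclength restricted to $E$, so it is absolutely continuous and $S_\mu$ is not a singular inner function at all.

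\textbf{The Poisson lower bound is wrong.} You assert that the stage-$(n+1)$ sub-arc adjacent to the gap $J$ ``lies within distance $\asymp g_n$ of the midpoint $z_J$ and carries mass $2^{-(n+1)}$,'' from which you deduce $u(z_J) \gtrsim 2^{-n}/g_n$. But this sub-arc has length $\ell_{n+1} \asymp 2^{-n} \gg g_n$: only its endpoint is at distance $\asymp g_n$ from $z_J$; the bulk of its mass sits much farther away. For a Cantor measure with $\ell_m \asymp 2^{-m}$ and mass $2^{-m}$ per stage-$m$ arc, the mass within distance $R$ of the endpoint of $J$ is $\asymp R$ for $g_n \lesssim R \lesssim 1$, so
$$
u(z_J) \asymp g_n\int_{g_n}^1 \frac{dM(R)}{R^2} \asymp g_n\int_{g_n}^1 \frac{dR}{R^2} \asymp 1,
$$
not $2^{-n}/g_n \to\infty$. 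Plugging $u(z_J)\asymp 1$ into the test sum gives $\sum_n 2^{n-1}g_n^{1-p}$, whose exponential factor is $2^{n(1-(1-p)^2/(1-2p))} = 2^{-np^2/(1-2p)}$, so the series \emph{converges} and no contradiction with $H^p$ membership is obtained. This is not a fixable lacuna: as the paper's second sharpness example shows, a Cantor measure forces $S'_\mu\notin H^p$ only when the dissection ratio $A$ satisfies $\log_2 A < (1-p)/(1-2p)$, i.e.\ strictly \emph{inside} the $\beta$-Beurling--Carleson threshold; at threshold the uniform Cantor measure is too spread out. The paper instead places atomic masses $|J|^{(1-2p)/(1-p)}(\delta_{a(J)}+\delta_{b(J)})$ at the gap endpoints of a zero-length pruned Cantor set with ratio $A = 2^{(1-p)/(1-2p)} > 2$, which concentrates mass at the critical scale and is what makes $u(z_J)$ blow up.
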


\begin{proof}
{\em Step 1.} In our example, $E$ will be a certain pruned Cantor set and 
$$\mu = \sum |J|^{\frac{1-2p}{1-p}} (\delta_{a(J)} + \delta_{b(J)}),$$
where $a(J)$ and $b(J)$ are the two endpoints of the complementary arc $J$.
In order for the measure $\mu$ to be finite, we need to arrange that
\begin{equation}
\label{eq:c2}
\sum |J|^{\frac{1-2p}{1-p}} < \infty.
\end{equation}
In addition, we will arrange that
\begin{equation}
\label{eq:c1}
\sum_J \mu(\beta J)^{p} |J|^{1-2p} = \infty,
\end{equation}
for some constant $\beta > 1$ to be chosen.
As $P_\mu(z_J) \gtrsim \frac{\mu(\beta J)}{|J|}$,
$$
\sum_J P_\mu(z_J)^{p} |J|^{1-p} = \infty
$$
and $S'_\mu \notin H^{p}$ by Lemma \ref{hp-test}. 

\medskip

{\em Step 2.} Let $N_j = \# \{ J : |J|  \asymp 2^{-j} \}$. To achieve (\ref{eq:c2}), we request that $N_j \asymp j^{-\alpha} \cdot 2^{\frac{1-2p}{1-p} \cdot j}$ for some $\alpha > 1$ to be chosen. In this case, the total measure supported on the endpoints of arcs of length $\le 2^{-j}$ is
$$
M_j  \, = \, \sum_{|J| \le 2^{-j}} \mu(\overline{J})
 \, \asymp \, \sum_{k=j}^\infty 2^{-\frac{1-2p}{1-p} \cdot k} N_k 
 \, \asymp \, \sum_{k=j}^\infty \frac{1}{k^\alpha}
 \, \asymp \, \frac{1}{j^{\alpha -1}}.
$$
Therefore, if we construct the arcs $\{ J \}$ so that
\begin{equation}
\label{eq:c3}
\mu(\beta J ) \asymp \frac{M_j}{N_j},  \qquad \text{for } |J| \asymp 2^{-j},
\end{equation}
then we would have
$$
\sum_J \mu(\beta J)^{p} |J|^{1-2p} \, \asymp \, \sum_{j=1}^\infty N_j 2^{-j(1-2p)} \biggl ( \frac {M_j}{N_j} \biggr )^{p} \, \asymp \, \sum_{j=1}^\infty \frac{1}{j^{\alpha -p}}.
$$
In order to obtain (\ref{eq:c1}), we may choose $\alpha$ to be any number in $(1,1+p)$.

\medskip

{\em  Step 3.} Fix a real number $A > 2$. Consider the standard Cantor set $E$, which at generation $n$ is formed from $2^n$ arcs of length $A^{-n}$. Inspection shows that $N_j \asymp 2^{j/\log_2 A}$. We choose $A$ appropriately so that
 $$
 \frac{1}{\log_2 A} = \frac{1-2p}{1-p} \in (0,1).
 $$
In order to make $N_j$ smaller, we slightly modify the construction of the standard Cantor set by removing a number of arcs. We call a generation {\em bad} if $N_j > j^{-\alpha} \cdot 2^{\frac{1-2p}{1-p} \cdot j}$ is too large. In a bad generation, we allow each arc to only have one descendant instead of two, say the left one.
In the pruned Cantor set, we have $N_j \asymp j^{-\alpha} \cdot 2^{\frac{1-2p}{1-p} \cdot j}$ as desired. 

We select $\beta > \frac{1}{1-2A}$ so that if $J$ is a complementary arc of some generation, then $\beta J$ covers the interval defining the Cantor set of the previous generation.
Since the mass of $\mu$ is evenly spread out,
$\mu$ satisfies (\ref{eq:c3}).
\end{proof}

In our second example of the sharpness of the exponent in Theorem \ref{derivative-in-hardy}, we have a slightly stronger assumption and a slightly stronger conclusion:
 
\begin{lemma}
Given $q < \frac{p}{1-p}$, there exists a $(1-q)$-Beurling-Carleson set $E$ and a measure $\mu$ supported on $E$ such that $S'_\nu \notin H^p$ for any $0 < \nu \le \mu$. 
\end{lemma}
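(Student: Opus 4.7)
The plan is to take $E$ to be a pruned Cantor-type subset of $\partial \mathbb{D}$ with uniform contraction ratio $A^{-1}$ and $\mu$ the natural self-similar measure giving each generation-$n$ Cantor arc mass $2^{-n}$. The improvement over the previous lemma will come from $\mu$ being \emph{non-atomic}: any $0 < \nu \le \mu$ is then also non-atomic, and by Lebesgue differentiation its mass cannot collapse onto a single point. I would construct $E$ so that at generation $n$ it consists of $2^n$ Cantor arcs of length $\asymp A^{-n}$, obtained at step $n$ by removing a middle arc of length $\asymp A^{-n}$ from each generation-$(n-1)$ parent arc. Since the $2^{n-1}$ complementary arcs at step $n$ have length $\asymp A^{-n}$,
\begin{equation*}
\sum_J |J|^{1-q} \, \asymp \, \sum_{n \ge 1} 2^{n-1} A^{-n(1-q)},
\end{equation*}
which converges precisely when $A > 2^{1/(1-q)}$. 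For any such $A$, $E$ is a $(1-q)$-Beurling-Carleson set.

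Now fix an arbitrary $0 < \nu \le \mu$ and write $\nu = f\,d\mu$ with $0 \le f \le 1$, $f \not\equiv 0$. The dyadic martingale $f_n(x) := \nu(I_n(x))/\mu(I_n(x))$ along the filtration by generation-$n$ Cantor arcs $I_n(x)$ converges $\mu$-a.e.~to $f$. Choosing $\delta > 0$ with $\mu(\{f > \delta\}) > 0$ and applying Egorov's theorem, I would obtain $N_0 \ge 1$ and a set $E' \subset E$ of positive $\mu$-measure such that $\nu(I_n(x)) \ge \delta \cdot 2^{-n}$ for all $x \in E'$ and $n \ge N_0$. Since $E'$ is covered by the generation-$n$ Cantor arcs meeting it, each of $\mu$-measure $2^{-n}$, the number of such arcs is $\gtrsim \mu(E') \cdot 2^n$.

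The key Poisson estimate goes as follows. Let $I$ be a generation-$(n-1)$ Cantor arc meeting $E'$ and $J$ the middle gap of length $\asymp A^{-n}$ removed from $I$ at step $n$. Since $\nu$ puts no mass on $J$, the two flanking generation-$n$ sub-arcs $I_L, I_R \subset I$ satisfy $\nu(I_L) + \nu(I_R) = \nu(I) \ge \delta \cdot 2^{-(n-1)}$. For $\zeta \in I_L \cup I_R$ we have $|\zeta - z_J| \asymp A^{-n}$ and $1 - |z_J|^2 \asymp A^{-n}$, so the Poisson kernel is $\asymp A^n$, giving $v(z_J) \gtrsim A^n \nu(I) \gtrsim \delta (A/2)^n$. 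Summing $v(z_J)^p |J|^{1-p}$ over such $J$'s at generation $n$ produces
\begin{equation*}
\gtrsim \, \mu(E') \cdot 2^{n-1} \cdot (A/2)^{np} \cdot A^{-n(1-p)} \, \asymp \, \mu(E') \cdot \bigl(2^{1-p} A^{2p-1}\bigr)^n,
\end{equation*}
and summing over $n \ge N_0+1$ diverges whenever $2^{1-p} A^{2p-1} \ge 1$, i.e.~whenever $A \le 2^{(1-p)/(1-2p)}$. Lemma \ref{hp-test} then forces $S'_\nu \notin H^p$.

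The allowed window $2^{1/(1-q)} < A \le 2^{(1-p)/(1-2p)}$ is non-empty precisely when $q < p/(1-p)$, which is the hypothesis; any choice of $A$ in this range produces the required pair $(E, \mu)$. The main difficulty is getting a lower bound on $v(z_J)$ \emph{uniform} in $\nu$, and the decisive observation is that $\nu(I_L) + \nu(I_R) = \nu(I)$ regardless of how $\nu$ distributes inside $I$, coupled with Lebesgue differentiation providing a uniform lower bound $\nu(I)/\mu(I) \ge \delta$ on enough generation-$n$ Cantor arcs to beat the sub-critical scaling.
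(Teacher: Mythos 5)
Your proposal is correct and follows essentially the same route as the paper: the same Cantor set with contraction ratio $A^{-1}$, the same self-similar measure $\mu$, the same window $2^{1/(1-q)} < A < 2^{(1-p)/(1-2p)}$, the same Poisson kernel estimate at the midpoints of the gaps, and the same appeal to Lemma~\ref{hp-test}. The one genuine difference is how you count the generation-$n$ arcs carrying $\nu$-mass $\gtrsim 2^{-n}$: you invoke martingale convergence along the Cantor filtration plus Egorov's theorem to extract a set $E'$ of positive $\mu$-measure with a uniform density lower bound. The paper gets the same conclusion more cheaply by a direct pigeonhole: if $\mathcal A(n)$ denotes the generation-$n$ arcs with $\nu(I)\ge 2^{-n-1}\nu(\partial\mathbb{D})$, then the complementary arcs carry total $\nu$-mass less than $\nu(\partial\mathbb{D})/2$, so $\sum_{I\in\mathcal A(n)}\nu(I)\ge\nu(\partial\mathbb{D})/2$, and since each $\nu(I)\le\mu(I)=2^{-n}$ this forces $\#\mathcal A(n)\ge 2^{n-1}\nu(\partial\mathbb{D})$. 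That bypasses measure differentiation and Egorov entirely, and gives explicit constants; the key point you highlight---that $\nu\le\mu$ prevents collapse onto a single arc---is exactly what the inequality $\nu(I)\le 2^{-n}$ encodes, without needing the finer a.e.~density information.
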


\begin{proof}
Fix a real number $A > 2$. Consider the standard Cantor set $E$, which at generation $n$ is formed from $2^n$ arcs of length $A^{-n}$. Let $\mu$ be the standard Cantor measure on $E$, that is, $\mu$ is the probability measure supported on $E$ which gives equal mass to arcs of generation $n$.

\medskip

{\em Step 1.~When is $E$ a Beurling-Carleson set?} 
In generation $n$, there are $2^{n-1}$ complementary arcs of length $A^{-n+1}(1-2A^{-1})$. If $\partial \mathbb{D} \setminus E = \bigcup I_k$, then
$$
\sum |I_k|^{1-q} \asymp \sum_n 2^n A^{-(1-q)n},
$$
which converges if $\log A > (\log 2) / (1-q)$. In other words, $E$ is a $q$-Beurling-Carleson set when $\log A > (\log 2) / (1-q)$.

\medskip

{\em Step 2.~When is the measure $\mu$ invisible?} Fix a measure $0 < \nu \le \mu$. Let $\mathcal A(n)$ be the collection of arcs $I$ of generation $n$, in the construction of the Cantor set $E$, such that $\nu(I) \ge 2^{-n-1} \nu(\partial \mathbb{D})$. Since $\# \mathcal A(n) \le 2^n$, we have
$$
\nu(\partial \mathbb{D}) \, \le \, 
\sum_{I \in \mathcal A(n)} \nu(I) + \sum_{I \not\in \mathcal A(n)} \nu(I) \, \le \, 
 \sum_{I \in \mathcal A(n)} \nu(I) + \frac{\nu(\partial \mathbb{D})}{2},
$$
which simplifies to
$$
\sum_{I \in \mathcal A(n)} \nu(I) \ge \frac{\nu(\partial \mathbb{D})}{2}.
$$
However, as $ \nu(I) \le 2^{-n}$ for any $I \in \mathcal A(n)$,
$$\#\mathcal A(n) \ge 2^n \cdot \frac{\nu (\partial \mathbb{D})}{2}.$$
Hence,
\begin{align*}
\sum_{I \in \mathcal A(n)} |I|^{1-p} P_\nu(z_I)^p & \gtrsim \sum_{I \in \mathcal A(n)} |I|^{1-2p} \nu(I)^p \\ 
& \gtrsim 2^n \nu(\partial \mathbb{D}) A^{-n(1-2p)} 2^{-np} \\ 
& = \biggl ( \frac{2^{1-p}}{A^{1-2p}} \biggr)^n \nu(\partial \mathbb{D}).
\end{align*}
Since the lengths and locations of the arcs defining $E$ of generation $n$ are comparable to the complementary arcs of generation $n$, we may use Lemma \ref{hp-test} to conclude that $S'_\nu \notin H^p$ if $2^{1-p} > A^{1-2p}$.

\medskip

{\em Step 3. Conclusion.}~To prove the lemma, we need to find an $A > 2$ satisfying
$$
\frac{1}{1-q} \cdot \log 2 < \log A < \frac{1-p}{1-2p} \cdot \log 2,
$$
which is possible if $1-q > \frac{1-2p}{1-p}$, that is, $q < \frac{p}{1-p}$.
\end{proof}

\medskip

\begin{remark}
There may also be an example in the extreme case when $q = \frac{p}{1-p}$.
\end{remark}

\section{Derivative in Hardy spaces II}
\label{sec:besov-spaces}

Suppose $0 < p < 1/2$ and $\mu \ge 0$ is a singular measure on $\partial \mathbb{D}$. Recall that by Theorem \ref{Hp-theory}, if $S'_{\mu} \in  H^p$ then $S_\mu$ satisfies the $(1+p)$-area condition (\ref{eq:area-condition2}).
We now show that if (\ref{eq:area-condition2}) holds, then $\mu = \sum \mu_i$ can be written as a countable sum of measures with $S'_{\mu_i} \in H^p$. In view of the implication $(2) \Rightarrow (3)$ of Theorem \ref{Hp-theory}, it is enough to prove the following lemma:

\begin{lemma}
Fix $0< p < 1/2$. Suppose $\mu$ is a measure supported on a $(1-p)$-Beurling-Carleson set. If $S_\mu$ satisfies the $(1+p)$-area condition (\ref{eq:area-condition2}),  then $S'_\mu \in H^p$.
\end{lemma}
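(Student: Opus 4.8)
The plan is to invoke Lemma~\ref{hp-test}. Since a $(1-p)$-Beurling-Carleson set is closed and of zero length, that lemma reduces the problem to showing
$$
\sum_J u(z_J)^p\,|J|^{1-p} < \infty,
$$
where $u = P_\mu$ is the Poisson extension of $\mu$ and $\{J\}$ are the complementary arcs of $E$. We split the arcs according to the size of $u(z_J)$, with a threshold $A = A(p)$ to be fixed. For arcs with $u(z_J) \le A$ we bound $u(z_J)^p|J|^{1-p} \le A^p|J|^{1-p}$ and sum to obtain $A^p\|E\|_{\BC_{1-p}} < \infty$; this is the only place the Beurling-Carleson hypothesis is used.

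For an arc $J$ with $u(z_J) > A$ the idea is to charge $u(z_J)^p|J|^{1-p}$ to the $(1+p)$-area condition on a region lying over the middle half $J/2$ of $J$. The key elementary estimate is that on the truncated Carleson box $\bigl\{ z \in \mathbb D : \tfrac{z}{|z|} \in J/2,\ 1-|z| \le \tfrac{|J|}{4} \bigr\}$ the whole of $\supp\mu \subseteq E$ lies at angular distance $\asymp |J|$ from $z$, so that the Poisson kernel of $z$ against $\mu$ is $\asymp \frac{1-|z|}{|J|^2}$ on this box, whence
$$
P_\mu(z) \;\asymp\; \frac{1-|z|}{|J|}\,u(z_J)
$$
there (only the lower bound is needed below). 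Consequently, for a suitable universal constant $c_1$ and a fixed $c_0 \in (0,1)$, the box-shaped region
$$
\widetilde R_J \;=\; \Bigl\{\, z \in \mathbb D : \tfrac{z}{|z|} \in J/2,\ \tfrac{c_1|J|}{u(z_J)} \le 1-|z| \le \tfrac{|J|}{4}\,\Bigr\}
$$
is nonempty once $u(z_J) > A$ with $A$ large, satisfies $\widetilde R_J \subseteq \{z : |S_\mu(z)| < c\}$ for an appropriate $c \in (0,1)$, and — being of fixed angular width $|J|/2$ with heights ranging from $\asymp |J|/u(z_J)$ up to $\asymp |J|$ — obeys
$$
\int_{\widetilde R_J} \frac{dA(z)}{(1-|z|)^{1+p}} \;\asymp\; |J| \cdot \Bigl(\tfrac{|J|}{u(z_J)}\Bigr)^{-p} \;=\; u(z_J)^p\,|J|^{1-p}.
$$

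Finally, distinct complementary arcs satisfy $J/2 \cap J'/2 = \emptyset$, so the regions $\widetilde R_J$ are pairwise disjoint. Summing the previous display over the arcs with $u(z_J) > A$ and using that all these regions sit inside the single sublevel set $\{|S_\mu| < c\}$,
$$
\sum_{J : u(z_J) > A} u(z_J)^p|J|^{1-p} \;\lesssim\; \int_{\bigsqcup_J \widetilde R_J} \frac{dA(z)}{(1-|z|)^{1+p}} \;\le\; \int_{\{z : |S_\mu(z)| < c\}} \frac{dA(z)}{(1-|z|)^{1+p}} \;<\;\infty
$$
by the $(1+p)$-area condition~(\ref{eq:area-condition2}). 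Combined with the estimate for small $u(z_J)$, this gives $\sum_J u(z_J)^p|J|^{1-p} < \infty$, hence $S'_\mu \in H^p$ by Lemma~\ref{hp-test}. The one nontrivial ingredient is the estimate $P_\mu(z) \gtrsim \frac{1-|z|}{|J|}\,u(z_J)$ on $Q_{J/2}$ near the boundary; this is a direct Poisson-kernel computation that uses only that $J/2$ stays at distance $\asymp |J|$ from $E$. The rest is bookkeeping (and the finitely many complementary arcs with $|J|$ not small contribute only finitely and may be discarded).
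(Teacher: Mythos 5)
Your proof is correct and follows essentially the same strategy as the paper: both reduce to Lemma~\ref{hp-test}, handle the arcs with small $u(z_J)$ using the $(1-p)$-Beurling--Carleson hypothesis, and for large $u(z_J)$ compare $u(z_J)^p|J|^{1-p}$ with the $(1+p)$-area integral over a region lying above $J$ and inside the sublevel set $\{|S_\mu|<c\}$. The only cosmetic difference is that the paper realizes this region as a union of tops $T_I$ of dyadic boxes $I\subset J_k$ with $P_\mu(z_I)\gtrsim 1$ (using that $P_\mu(z_I)/|I|$ is essentially increasing as $I$ shrinks inside a complementary arc), whereas you write it directly as the rectangular piece of the Carleson box over $J/2$ at heights between $\asymp|J|/u(z_J)$ and $\asymp|J|$, justified by the Poisson-kernel estimate $P_\mu(z)\asymp\frac{1-|z|}{|J|}u(z_J)$ there; up to constants these are the same region and the computations coincide.
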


\begin{proof}
Let $E = \supp \mu$ and write $\partial \mathbb{D} \setminus E = \bigcup J_k$. By Lemma \ref{hp-test}, we need to show that
$$
\sum_k P_\mu(z_{J_k})^p |J_k|^{1-p} < \infty.
$$
Since  $\sum |J_k|^{1-p} < \infty$, we only need to show that
$$
\sum_{k:\, P_\mu(z_{J_k}) \ge 1} P_\mu(z_{J_k})^p |J_k|^{1-p} < \infty.
$$
Let $J \subset \partial \mathbb{D}$ be any arc with $J \cap E = \emptyset$. It is easy to see that
$
\frac{P_\mu(z_{I})}{|I|} \gtrsim \frac{P_\mu(z_{J})}{|J|},
$
for any arc $I \subset J$. Therefore, if $P_\mu(z_{J_k}) \ge 1$, then
\begin{align*}
\sum_{\substack{  I \subset J_k \dyadic \\ P_\mu(z_{I}) \ge 1}} |I|^{1-p}  
& \gtrsim
\sum_{\substack{  I \subset J_k \dyadic \\ | I| \gtrsim |J_k| / P_\mu(z_{J_k})}} |I|^{1-p}  \\
& \asymp \sum_{n=0}^{\log_2 P_\mu(z_{J_k})}  2^n  \cdot (2^{-n} |J_k|)^{1-p} \\ 
& \asymp |J_k|^{1-p} P_\mu(z_{J_k})^p.
\end{align*}
By Harnack's inequality, one can find a constant $0 < c < 1$ so that
\begin{align*}
\sum_{k:\, P_\mu(z_{J_k}) \ge 1} P_\mu(z_{J_k})^p |J_k|^{1-p} & \lesssim \sum_{k:\, P_\mu(z_{J_k}) \ge 1} \ 
\sum_{\substack{ I \subset J_k \dyadic \\ P_\mu(z_{I}) \ge 1}} |I|^{1-p} \\
& \lesssim
\int_{\{z \in \mathbb{D}:\, |S_\mu(z)| \le c\}} \frac{dA(z)}{(1-|z|)^{1+p}},
\end{align*}
which is finite by assumption. The proof is complete.
\end{proof}
 
We now give an example of a singular inner function $S_\mu$ which satisfies the $(1+p)$-area condition (\ref{eq:area-condition2}) yet $S'_\mu \notin H^p$.

\begin{lemma}
For $0 < p < 1/2$, there exists a singular inner function $S_\mu$ with $S'_\mu \notin H^p$ such that
\begin{equation*}
\int_{\{z \in \mathbb{D}:\, |S_\mu(z)| \le c\}}  \frac{dA(z)}{(1-|z|)^{1+p}} < \infty,
\end{equation*}
for any $0 < c < 1$.
\end{lemma}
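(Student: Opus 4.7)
The plan is to construct an atomic singular measure $\mu$ whose support is not a $(1-p)$-Beurling-Carleson set, yet which satisfies the $(1+p)$-area condition. Let $\{x_n\}$ be a monotone sequence of points on $\partial \mathbb{D}$ accumulating at a fixed point $x_\infty$, with consecutive gaps $|J_n| = |x_n - x_{n+1}| \asymp n^{-1/(1-p)}$. Since $1/(1-p) > 1$ the gaps are summable, so the $x_n$ do converge, but $\sum |J_n|^{1-p} = \sum 1/n$ diverges, so $E = \overline{\{x_n\}}$ is not $(1-p)$-Beurling-Carleson. Assign masses $c_n = |J_n|/(\log n)^{1/p}$ and set $\mu = \sum c_n \delta_{x_n}$, which is plainly a finite positive singular measure.

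To see that $S'_\mu \notin H^p$, I apply the Hp-test of Lemma \ref{hp-test}. The complementary arcs of $E$ are the gaps $J_n$ together with one long arc, and for each $n$ the point $z_{J_n}$ lies at distance $\asymp |J_n|$ from both $x_n$ and $x_{n+1}$. Hence $P_\mu(z_{J_n}) \gtrsim c_n/|J_n| = (\log n)^{-1/p}$, and
\[
\sum_J P_\mu(z_J)^p |J|^{1-p} \, \gtrsim \, \sum_n \frac{|J_n|^{1-p}}{\log n} \, = \, \sum_n \frac{1}{n \log n} \, = \, \infty.
\]

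For the $(1+p)$-area condition at a fixed $c > 0$, I decompose $\{P_\mu > c\}$ into local \emph{spikes} near individual atoms and a global region near the accumulation point $x_\infty$. Near $x_n$ the Poisson integral is dominated by the single term $c_n (1-|z|^2)/|x_n - z|^2$, whose super-level set $\{> c\}$ is a Euclidean disk of diameter $\asymp c_n/c$ internally tangent to $\partial \mathbb{D}$ at $x_n$; a direct integration gives $\int (1-|z|)^{-(1+p)}\,dA \asymp c_n^{1-p}$ over each such disk. Summing the spike contributions yields $\sum_n c_n^{1-p} = \sum_n 1/(n (\log n)^{(1-p)/p})$, which converges because $(1-p)/p > 1$ when $p < 1/2$. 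For the region near $x_\infty$, I use the estimate $|x_n - x_\infty| \asymp n^{-p/(1-p)}$ (from summing the tail of the gap sequence), split the atoms according to whether $|x_n - x_\infty|$ is bigger or smaller than the angular distance $\Delta$ from $x_\infty$, and obtain the bound $P_\mu\bigl((1-\epsilon)e^{i(x_\infty+\Delta)}\bigr) \lesssim \epsilon/(\Delta (\log(1/\Delta))^{1/p})$ for $\epsilon < \Delta$. Integrating $(1-|z|)^{-(1+p)}\,dA$ over the set where this upper bound exceeds $c$ gives $\lesssim c^{-p} \int_0^{\Delta_0} \Delta^{-p}/\log(1/\Delta)\,d\Delta$, which is finite.

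The main obstacle will be estimating $P_\mu$ near $x_\infty$: a naive bound by $\|\mu\|$ divided by the worst distance yields nothing useful, and one must carefully balance the decay of $c_n$ against the accumulation rate of the $x_n$. The powers of $n$ and the logarithm in the choice of $|J_n|$ and $c_n$ are tuned precisely so that the two families of contributions (atoms far from $x_\infty$ with small Poisson kernel, and atoms close to $x_\infty$ with large total mass) match and produce the slow logarithmic decay $P_\mu((1-\epsilon)x_\infty) \asymp (\log(1/\epsilon))^{-1/p}$; this is exactly the rate that reconciles the validity of the $(1+p)$-area condition with the failure of $S'_\mu \in H^p$.
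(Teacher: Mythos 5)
Your construction is correct and takes a genuinely different route from the paper. The paper works with $n$ equally spaced atoms of equal mass $1/n^{2-\varepsilon}$, shows that $\|S'_\mu\|_{H^p}^p \asymp n^{\varepsilon p}$ while the $(1+p)$-area integral is $\asymp n^{1-(2-\varepsilon)(1-p)} \to 0$ for $\varepsilon$ small, and then asserts that ``independent copies of this construction'' --- rescaled and superposed along dyadic annuli --- yield a single singular inner function with the desired properties, leaving the gluing to the reader. You instead build one explicit atomic measure supported on a monotone sequence $\{x_n\}$ accumulating at a single point, with gaps $|J_n| \asymp n^{-1/(1-p)}$ and masses $c_n = |J_n|(\log n)^{-1/p}$ tuned so that the Poisson integral decays at precisely the logarithmic rate $P_\mu((1-\epsilon)x_\infty) \asymp (\log(1/\epsilon))^{-1/p}$ needed to make the area integral converge while the $H^p$-test sum diverges. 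Both approaches are sound. Your route is self-contained (no unstated gluing), but it does require the somewhat delicate two-term Poisson-kernel estimate near $x_\infty$: splitting atoms by whether $|x_n - x_\infty| \gtrless \Delta$ and verifying that both families contribute $\asymp \epsilon/(\Delta (\log(1/\Delta))^{1/p})$, as well as checking that in the intermediate regime $|J_n| < \epsilon < \Delta$ the set $\{P_\mu > c\}$ contributes only a set bounded away from $\partial\mathbb{D}$ (since $(\log(1/\epsilon))^{-1/p} \to 0$). The one verification you should spell out in a final write-up is that $\{P_\mu > c\}$ is indeed covered by the union of the horoball spikes, the logarithmic slab near $x_\infty$, and a compact subset of $\mathbb{D}$; you sketch it but do not state it explicitly. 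The paper's approach avoids this by keeping everything in a uniform finite model and deferring accumulation issues to the unwritten superposition step; your approach confronts the accumulation directly.
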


\begin{proof}[Sketch of proof]
 To get a feeling of why the lemma is true, we examine the situation for the measure $\mu$ which consists of $n$ equally spaced point masses on the circle: $\mu = (1/n^{2-\varepsilon}) \sum_{k=0}^{n-1} \delta_{\xi_k}$ where $\xi_k = e^{2\pi i k/n}$, $k  = 0,1,2,\dots, n-1$ and $\varepsilon > 0$ is a constant to be chosen. Since
\begin{align*}
|S'_{\mu}(e^{i\theta})| & =  \int_0^{2\pi} \frac{2d\mu(t)}{|e^{i\theta} - e^{it}|^2}  \\
& =  \frac{2}{n^{2-\varepsilon}} \sum_{k=0}^{n-1} \frac{1}{|e^{i\theta}-\xi_k|^2} \\
& \asymp \, \frac{1}{n^{2-\varepsilon} \cdot \dist(e^{i\theta}, \{\xi_k\})^2},
\end{align*}
the integral
$$
\int_0^{2\pi} |S'_{\mu}(e^{i\theta})|^p d\theta \, \asymp \, n \int_{-\pi/n}^{\pi/n} \biggl ( \frac{1}{n^{2-\varepsilon}\theta^2} \biggr)^p d\theta \, \asymp \, n^{\varepsilon p}
$$
 tends to infinity as $n \to \infty$.

Let $H_k$ be the horoball which rests at $\xi_k$ of diameter $\alpha/n^{2-\varepsilon}$. It is not difficult to see that for any $0 < c < 1$, there exists an
$\alpha = \alpha(c) > 0$ such that
$$
\{ z \in \mathbb{D}: |S_{\mu}(z)| \le c \} \subseteq \bigcup_{k=0}^{n-1} H_k.
$$
As the integral over a single horoball
$$
\int_{H_0} \frac{dA(z)}{(1-|z|^2)^{1+p}} \asymp \frac{1}{n^{(2-\varepsilon)(1-p)}},
$$
the integral over their union is
$$
\int_{\bigcup {H_k}} \frac{dA(z)}{(1-|z|^2)^{1+p}} \asymp n^{1-(2-\varepsilon)(1-p)}.
$$
Since $0 < p < 1/2$, we can choose $\varepsilon > 0$ sufficiently small to make the exponent $1-(2-\varepsilon)(1-p)$ negative, so that the integrals
$$
\int_{\{z\in\mathbb{D}:\, |S_\mu(z)|< c\}} \frac{dA(z)}{(1-|z|)^{1+p}}
$$
tend to 0 as $n \to \infty$.

Independent copies of this construction provide an example of a singular inner function $S$ with $S' \notin H^p$ for which
$$
\int_{\{z\in\mathbb{D}:\, |S(z)|< c\}} \frac{dA(z)}{(1-|z|)^{1+p}} < \infty.
$$
We leave the details to the reader.
\end{proof}

\section{Background on angular derivatives}

For $0 < \theta < \pi$ and $0 < \delta < 1$, let $S_{\theta, \delta}(p) = S_\theta(p) \cap B(p, \delta)$ denote the truncated Stolz angle of opening $\theta$ with vertex at $p \in \partial \mathbb{D}$.

Suppose $\Omega \subset \mathbb{D}$ is a domain in the unit disk bounded by a Jordan curve. We say that $\Omega$ has an {\em inner tangent} at a point $p \in \partial \Omega \cap \partial \mathbb{D}$ if for any $0 < \theta < \pi$, $\Omega$ contains a truncated Stolz angle of opening $\theta$ with vertex at $p$.

Let $\varphi: \mathbb{D} \to \Omega$ be a conformal map.  We say that $\varphi$ has a (non-zero) {\em angular derivative} at $q = \varphi^{-1}(p)$ if the non-tangential limit
$$
\lim_{z \to q} |\varphi'(z)| = A,
$$
for some real number $A > 0$. 
While the number $A$ depends on the choice of Riemann map $\varphi$, the existence of the angular derivative does not. 
In other words, possessing an angular derivative is an intrinsic property of $(\Omega, p)$, which we record by saying that $\Omega$ is {\em thick} at $p$.
In the language of potential theory, one would say that the complement $\mathbb{D} \setminus \Omega$ is minimally thin at $p$, see \cite[Theorem 5.2]{burdzy}, which means that Brownian motion conditioned to exit the unit disk at $p$ is eventually contained in $\Omega$.

To avoid dealing with the point $q$, we will simply say that the inverse conformal map $\psi: \Omega \to \mathbb{D}$ has an angular derivative at $p$ and write $|\psi'(p)| = A^{-1}$. 
It is easy to see that if $\Omega$ is thick at $p$, then $\Omega$ possesses an inner tangent at $p$.

Rodin and Warschawski gave an if and only if condition for $\psi$ to possess an angular derivative at $p$ in terms of moduli of curve families, e.g.~see \cite[Theorem V.5.7]{garnett-marshall} or \cite{BK}. When $\Omega$ is a starlike domain with regular boundary, their condition takes a simpler form \cite{IK22}:

\begin{theorem}
\label{rodin-warschawski}
Suppose $\Omega = \bigl \{ r\zeta : \zeta \in \partial \mathbb{D}, \, 0 \le r < 1 - h(\zeta) \bigr \}$,
where $h: \partial \mathbb{D} \to [0, 1/2]$ is a continuous function. Assume that $h$ satisfies the doubling condition
 $$
 h(\zeta_1) \ge c \cdot h(\zeta_2), \qquad \text{whenever }|\zeta_2 - \zeta_1| < c \cdot h(\zeta_1),
 $$
 for some $c > 0$. Then, $\psi$ has an angular derivative at $p \in \partial \Omega \cap \partial \mathbb{D}$ if and only if
$$
\int_{\partial \mathbb{D}} \frac{h(\zeta)}{|\zeta-p|^2} |d\zeta| < \infty.
$$
 \end{theorem}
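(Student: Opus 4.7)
My strategy is to apply the general Rodin-Warschawski criterion for angular derivatives, which expresses the existence of $\psi'(p)$ in terms of moduli of curve families near $p$, and then evaluate these moduli explicitly using the starlike structure of $\Omega$ together with the doubling condition on $h$.

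Without loss of generality take $p = 1$. The conformal map $w = \log \frac{1}{1-z}$ carries a neighborhood of $1$ in $\mathbb{D}$ biholomorphically onto a right half-strip whose two horizontal boundary lines $\im w = \pm \pi/2$ correspond to the two sides of $\partial \mathbb{D}$ adjacent to $1$. In these straightened coordinates the image $\widetilde \Omega$ of $\Omega$ is this half-strip with curved indentations pushed inward from the boundary, one for each complementary radial segment $\{r\zeta : 1 - h(\zeta) < r \le 1\}$. A direct Taylor expansion of $w(z)$ shows that when $|\zeta - 1| \asymp 2^{-n}$ and $h(\zeta) \lesssim |\zeta - 1|$, the corresponding indentation is, up to bounded quasiconformal distortion, a vertical segment of length $\asymp h(\zeta)/|\zeta - 1|$ rooted on one of the lines $\im w = \pm \pi/2$ at $\re w \asymp n \log 2$.

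Define $B_n = \{w : n \log 2 < \re w < (n+1)\log 2,\ |\im w| < \pi/2\}$ and set $\widetilde \Omega_n = \widetilde \Omega \cap B_n$. Let $m_n$ denote the modulus of the curve family in $B_n$ joining its two vertical sides, and let $\widetilde m_n$ denote the analogous modulus for $\widetilde \Omega_n$. The general Rodin-Warschawski criterion (as in \cite[Theorem V.5.7]{garnett-marshall}) asserts that $\psi$ has an angular derivative at $p$ if and only if the total modulus defect
$$
\sum_{n \ge N_0} \bigl( m_n - \widetilde m_n \bigr)
$$
is finite. I plan to prove the two-sided comparison
$$
m_n - \widetilde m_n \ \asymp \ H_n \ := \ \int_{2^{-n-1} \le |\zeta - 1| < 2^{-n}} \frac{h(\zeta)}{|\zeta - 1|^2}\, |d\zeta|,
$$
via Dirichlet integral methods using admissible test metrics and dual extremal length estimates on the perturbed rectangles $\widetilde \Omega_n$. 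Summing over $n$ then yields $\sum_n H_n \asymp \int_{\partial \mathbb{D}} h(\zeta)/|\zeta - 1|^2\, |d\zeta|$, completing the equivalence.

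The doubling hypothesis on $h$ enters in two essential places: first, it guarantees that inside each box $B_n$ the indentation pattern is locally uniform enough that $H_n$ captures the full $n$-th dyadic contribution without significant artifacts from indentations that straddle two consecutive boxes; and second, it underlies the passage between the discrete dyadic sum $\sum_n H_n$ and the continuous integral on $\partial \mathbb{D}$. The main obstacle will be proving the sharp two-sided modulus estimate; once this is in hand, the passage from the dyadic sum to the integral is routine.
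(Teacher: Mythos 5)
The paper does not actually prove Theorem \ref{rodin-warschawski}; it states it as a consequence of the general Rodin--Warschawski criterion and cites \cite{IK22} for the derivation in the starlike case. So there is no internal proof to compare against, and I can only evaluate your sketch on its own terms.

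Your framework is the natural one and is almost certainly what \cite{IK22} uses: straighten via $w = \log\frac{1}{1-z}$, identify the indentations as boundary perturbations of vertical height $\asymp h(\zeta)/|\zeta - p|$ (this follows directly from $w'(\zeta) = 1/(1-\zeta)$, and the value $h(\zeta)/|\zeta-p|$ together with $d(\re w) \asymp |d\zeta|/|\zeta-p|$ shows $\int \epsilon(t)\,dt \asymp \int h(\zeta)/|\zeta-p|^2\,|d\zeta|$), and invoke the modulus-defect form of the angular-derivative criterion.

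However, the proposal stops at the heart of the matter. The two-sided estimate $m_n - \widetilde m_n \asymp H_n$ is announced as a plan, not proved, and the two inequalities have very different status. The direction $m_n - \widetilde m_n \gtrsim H_n$ is the easy, universal one (it is the Ahlfors--Warschawski inequality $\lambda \ge \int dx/\theta(x)$ applied block-by-block, and does not use doubling). The direction $m_n - \widetilde m_n \lesssim H_n$ is exactly where doubling is indispensable, and it is also where your sketch says the least. Without a regularity hypothesis this inequality is simply false: a thin spike of height $a$ and width $\delta \ll a$ has $H_n \asymp a\delta$, but its modulus defect is governed by the oscillation term and is $\asymp a^2 \gg H_n$. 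The doubling condition must be used to show that a point $\zeta_0$ with $h(\zeta_0) = \eta$ forces $h \gtrsim \eta$ on an arc of length $\gtrsim \eta$ around $\zeta_0$, so that in the straightened picture any indentation is at least as wide as it is tall, and the oscillation contribution in the Jenkins--Oikawa/Warschawski estimate is controlled by $H_n$. You cite the doubling condition for block-straddling and discrete-to-continuous issues, but neither is the essential one; until the oscillation bound is written out, this is a proof outline rather than a proof. Two further small points you should address: (i) before splitting into blocks you need $h(\zeta) = o(|\zeta - p|)$ near $p$ so that $\widetilde\Omega_n$ stays connected — this follows from the integral condition plus doubling in one direction and from the existence of an inner tangent in the other; (ii) the phrase ``up to bounded quasiconformal distortion'' is doing more work than it can carry here, since a bounded quasiconformal distortion changes modulus defects by bounded \emph{multiplicative} factors and could a priori destroy the summability you are trying to prove — you should argue via first-order perturbation of the conformal straightening directly.
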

 
We will use the following elementary lemma about angular derivatives: 

\begin{lemma}
\label{increasing-angular-derivatives}
Let $\{\Omega_n\}_{n=1}^\infty$ be an increasing sequence of Jordan domains, whose union is the unit disk. Suppose the conformal maps $\psi_n: \Omega_n \to \mathbb{D}$ converge uniformly on compact subsets to the identity.
If $\psi_1$ has an angular derivative at $p \in \partial \Omega \cap \partial \mathbb{D}$, then the angular derivatives  $|\psi'_n(p)|$ tend to 1.
\end{lemma}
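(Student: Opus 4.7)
My plan is to reduce the problem to controlling the boundary derivative of the univalent self-map $G_n := \psi_n \circ \psi_1^{-1}: \mathbb{D} \to \mathbb{D}$, which converges to $\psi_1^{-1}$ uniformly on compact subsets. I would begin with two preliminary observations. First, since $\Omega_1 \subset \Omega_n$ and minimal thinness passes to subsets, the complement $\mathbb{D} \setminus \Omega_n$ is minimally thin at $p$ whenever $\mathbb{D}\setminus\Omega_1$ is, so every $\Omega_n$ is thick at $p$ and $A_n := |\psi_n'(p)|$ is a well-defined finite nonzero number. Second, by the chain rule for angular derivatives (valid because any nontangential cone at $q_1 := \psi_1(p)$ is mapped by $\psi_1^{-1}$ into a nontangential cone at $p$ inside $\Omega_1 \subset \Omega_n$), we have $|G_n'(q_1)| = A_n/A$, where $A := |\psi_1'(p)|$ and $G_n$ sends $q_1$ to $q_n^* := \psi_n(p) \in \partial \mathbb{D}$.

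For the upper bound $\limsup A_n \leq 1$, I would apply Julia's inequality to the self-map $\psi_n^{-1} : \mathbb{D} \to \Omega_n \subset \mathbb{D}$ (angular derivative $1/A_n$ at $q_n^*$, image $p$) at the point $w=0$. Using $|q_n^*| = 1$, this collapses to
\[
A_n \leq \frac{1 - |\psi_n^{-1}(0)|^2}{|p - \psi_n^{-1}(0)|^2}.
\]
Since $\psi_n \to \mathrm{id}$ on compacts forces $\psi_n^{-1}(0) \to 0$, the right-hand side tends to $1/|p|^2 = 1$, yielding the upper bound (and the a priori estimate $\sup_n A_n < \infty$).

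For the lower bound $\liminf A_n \geq 1$, the naive approach---applying Julia to $G_n$ at $q_1$ and passing to a limit in the Julia ratio---runs into the technical obstacle of proving $q_n^* \to p$, which is not automatic from compact convergence of $\psi_n$ to the identity. I would circumvent this by using only $|q_n^*|=1$, so that $|q_n^* - G_n(w)| \geq 1 - |G_n(w)|$; combined with Julia's inequality for $G_n$ at $q_1$ this produces
\[
A_n \geq A \cdot \frac{1 - |G_n(w)|}{1 + |G_n(w)|} \cdot \frac{1-|w|^2}{|q_1 - w|^2}, \qquad w \in \mathbb{D}.
\]
Setting $w = rq_1$ and first letting $n \to \infty$ uses the compact convergence $|G_n(rq_1)| \to |\psi_1^{-1}(rq_1)|$; then letting $r \to 1^-$ and invoking the angular derivative relation $(1 - |\psi_1^{-1}(rq_1)|)/(1-r) \to 1/A$ gives $\liminf_n A_n \geq A \cdot (1/A) \cdot 1 = 1$. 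Together with the upper bound, this forces $A_n \to 1$. The step I expected to be obstructive---the boundary convergence $q_n^* \to p$---is sidestepped by this weaker but sufficient estimate that extracts only the modulus $|G_n(w)|$ rather than tracking the image $q_n^*$ itself.
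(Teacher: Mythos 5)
The paper states Lemma~\ref{increasing-angular-derivatives} without proof, so there is no ``paper proof'' to compare against; your task is therefore to check the argument on its own. I believe it is correct and complete.

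Your two preliminary observations are both right: minimal thinness passes to subsets (since reduced functions are monotone in the set), so each $\Omega_n$ is thick at $p$ and $A_n=|\psi_n'(p)|$ is finite; and the chain rule for angular derivatives applies to $G_n=\psi_n\circ\psi_1^{-1}$ because $\psi_1^{-1}$ carries Stolz angles at $q_1$ into nontangential cones at $p$ inside $\Omega_1\subset\Omega_n$, where the angular derivative of $\psi_n$ is taken. The upper bound correctly applies Julia's inequality to the self-map $\psi_n^{-1}:\mathbb{D}\to\Omega_n\subset\mathbb{D}$ at $w=0$ and uses that $\psi_n^{-1}(0)\to 0$, which follows from $\psi_n\to\mathrm{id}$ locally uniformly via a Rouch\'e/argument-principle argument. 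For the lower bound, the key maneuver---dropping the unknown image point $q_n^*$ in favor of the crude estimate $|q_n^*-G_n(w)|\ge 1-|G_n(w)|$, and only then letting $n\to\infty$ followed by $r\to 1^-$ along the radius---is exactly what is needed to avoid having to prove $q_n^*\to p$; the computation
\[
\frac{1-|G_n(rq_1)|}{1+|G_n(rq_1)|}\cdot\frac{1-r^2}{|q_1-rq_1|^2}\;\xrightarrow[n\to\infty]{}\;\frac{1-|\psi_1^{-1}(rq_1)|}{1+|\psi_1^{-1}(rq_1)|}\cdot\frac{1+r}{1-r}\;\xrightarrow[r\to 1^-]{}\;\frac{1}{A}
\]
is correct (the second limit uses the radial form $1-|\psi_1^{-1}(rq_1)|\sim (1-r)/A$ of the angular derivative, which is part of Julia--Wolff--Carath\'eodory). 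Combining gives $\liminf A_n\ge 1$ and $\limsup A_n\le 1$.

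One presentational remark: you might note explicitly that the lower bound is where the inclusion $\Omega_1\subset\Omega_n$ and the hypothesis that $\psi_1$ has an angular derivative actually enter; the upper bound uses neither and would hold for any sequence of thick domains with $\psi_n^{-1}(0)\to 0$. This makes clear why both hypotheses are genuinely needed.
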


We will also need the following theorem from \cite{IK22} which describes how composition operators 
act on measures on the unit circle:
 
\begin{theorem}
\label{radon-nikodym2}
Suppose $\Omega \subset \mathbb{D}$ is a Jordan domain, $\varphi: \mathbb{D} \to \Omega$ is a conformal map
and $\psi: \Omega \to \mathbb{D}$ is its inverse. 
Let $\mu \ge 0$ be a positive measure on the unit circle.  Since $P_{\mu}(\varphi(z))$ is a positive harmonic function, it can be represented as the Poisson extension of some finite measure $\nu \ge 0$. If we use the normalization $0 \in \Omega$ and $\varphi(0) = 0$, then
\begin{equation}
\label{eq:radon-nikodym2}
\varphi_*\nu = P_\mu d\omega_{\Omega, 0} + 
|\psi'| \, d\mu,
\end{equation}
provided that we interpret $|\psi'(p)| = 0$ if $p \notin \partial \Omega$ or $\Omega$ is not thick at $p$.
\end{theorem}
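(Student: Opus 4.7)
The plan is to reduce the formula to the case of a single point mass $\mu = \delta_\eta$ via Fubini, and then to compute the Herglotz measure $\nu_\eta$ of the positive harmonic function
$v_\eta(w) := P_{\delta_\eta}(\varphi(w)) = (1-|\varphi(w)|^2)/|\varphi(w)-\eta|^2$
explicitly. By Fubini, $P_\mu(\varphi(w)) = \int v_\eta(w)\, d\mu(\eta)$, so uniqueness of the Herglotz representation gives $\nu = \int \nu_\eta\, d\mu(\eta)$ and hence $\varphi_*\nu = \int \varphi_*\nu_\eta\, d\mu(\eta)$. Since $\int P_{\delta_\eta}(q)\, d\mu(\eta) = P_\mu(q)$ and $\int |\psi'(\eta)|\, \delta_\eta\, d\mu(\eta) = |\psi'|\, d\mu$, it suffices to establish
\[
\varphi_*\nu_\eta \,=\, P_{\delta_\eta}\, d\omega_{\Omega,0} \,+\, |\psi'(\eta)|\, \delta_\eta
\]
for each fixed $\eta \in \partial \mathbb{D}$.

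Using Carath\'eodory's extension theorem, $\varphi$ is a homeomorphism $\overline{\mathbb{D}} \to \overline{\Omega}$, yielding a Borel partition $\partial \mathbb{D} = A \sqcup B$ with $A := \varphi^{-1}(\partial \Omega \cap \mathbb{D})$ and $B := \varphi^{-1}(\partial \Omega \cap \partial \mathbb{D})$. For $\zeta \in A$ the nontangential limit of $v_\eta$ at $\zeta$ equals $P_{\delta_\eta}(\varphi(\zeta))$ by continuity of $P_{\delta_\eta}$ on $\mathbb{D}$; for $\zeta \in B$ with $\zeta \ne \zeta_0 := \varphi^{-1}(\eta)$ (empty if $\eta \notin \partial \Omega$), the limit is $0$ since $\varphi(\zeta) \in \partial \mathbb{D} \setminus \{\eta\}$. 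By Fatou's theorem, these values furnish the density of the absolutely continuous part of $\nu_\eta$, while its singular part is confined to the set $\{\zeta : \limsup_{r \to 1^-}(1-r) v_\eta(r\zeta) > 0\}$, which the same estimates restrict to $\{\zeta_0\}$. Hence $\nu_\eta$ has no singular-continuous component and $\nu_\eta|_B = c_\eta\, \delta_{\zeta_0}$ for some $c_\eta \ge 0$. Combined with $\varphi_*(d\theta/(2\pi)) = d\omega_{\Omega,0}$, this already gives $\varphi_*(\nu_\eta|_A) = P_{\delta_\eta}\, d\omega_{\Omega,0}$ on $\partial \Omega \cap \mathbb{D}$.

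The heart of the argument is identifying $c_\eta = \tfrac{1}{2}\lim_{r \to 1^-}(1-r)\, v_\eta(r\zeta_0)$ with $|\psi'(\eta)|$ by applying the Julia--Carath\'eodory theorem to $\varphi$ at $\zeta_0$. If $\Omega$ is thick at $\eta$, the angular derivative $|\varphi'(\zeta_0)| = 1/|\psi'(\eta)|$ is finite and the standard asymptotics $1-|\varphi(r\zeta_0)|^2 \sim 2(1-r)/|\psi'(\eta)|$ together with $|\varphi(r\zeta_0)-\eta| \sim (1-r)/|\psi'(\eta)|$ yield $c_\eta = |\psi'(\eta)|$. If $\Omega$ is not thick at $\eta$, Julia's lemma forces $(1-r)/(1-|\varphi(r\zeta_0)|) \to 0$, and combined with the elementary bound $|\varphi(r\zeta_0)-\eta| \ge 1-|\varphi(r\zeta_0)|$ this gives $(1-r) v_\eta(r\zeta_0) \le 2(1-r)/(1-|\varphi(r\zeta_0)|) \to 0$, so $c_\eta = 0 = |\psi'(\eta)|$ by the stated convention. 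Pushing forward $c_\eta \delta_{\zeta_0}$ by $\varphi$ yields the atom $|\psi'(\eta)|\, \delta_\eta$ on $\partial \Omega \cap \partial \mathbb{D}$, completing the identification of $\varphi_*\nu_\eta$.

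The main obstacle is the atom computation, particularly the non-thick case where $\varphi$ fails to have a finite angular derivative at $\zeta_0$: one must carefully control the relative decay rates of numerator and denominator via Julia's lemma to confirm that $c_\eta$ vanishes, consistent with the convention $|\psi'(\eta)| = 0$.
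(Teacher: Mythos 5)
The paper itself gives no proof of this theorem --- it is quoted from the reference [IK22] on critical values of inner functions --- so there is no in-text argument to compare against. Your strategy (superpose over $\mu = \int \delta_\eta \, d\mu(\eta)$, then a Julia--Carath\'eodory analysis at $\zeta_0 = \varphi^{-1}(\eta)$) is the natural one, and most of the structure is sound: the superposition step is legitimate since $\nu_\eta(\partial\mathbb{D}) = v_\eta(0) = 1$ for every $\eta$ (using $\varphi(0)=0$), so $\int \nu_\eta\, d\mu(\eta)$ is finite; the identification of the absolutely continuous density via the nontangential limits on $A$ and on $B \setminus \{\zeta_0\}$ is correct; and the thick/non-thick dichotomy at $\zeta_0$ via Julia's inequality is handled properly.

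There is, however, one genuine slip. You exclude a singular-continuous part of $\nu_\eta$ by asserting that its singular part is confined to the set $\{\zeta : \limsup_{r \to 1^-}(1-r)\, v_\eta(r\zeta) > 0\}$. That set is exactly the set of \emph{atoms} of $\nu_\eta$: by dominated convergence, $(1-r)\, P_\nu(r\zeta) \to 0$ whenever $\nu$ has no atom at $\zeta$, and in particular a Cantor-type singular measure has $(1-r)\,P_\nu(r\zeta) \to 0$ for every $\zeta$ while being purely singular-continuous. So invoking this set to exclude a singular-continuous piece presupposes exactly what you are trying to prove. The correct fact to cite is that the singular part $\nu_{\eta,s}$ is concentrated on $\{\zeta : \lim_{r\to 1^-} v_\eta(r\zeta) = +\infty\}$ (the Poisson integral of a positive measure has radial limit $+\infty$ at $\nu_s$-a.e.\ point), and your earlier computations --- finite limit on $A$, zero limit on $B \setminus \{\zeta_0\}$ --- already confine \emph{that} set to $\{\zeta_0\}$. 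With this one substitution the argument closes.
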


\section{Background in PDE}

\label{sec:background-PDE}

In this section, we make some general observations about semi-linear elliptic equations of the form
\begin{equation}
\label{eq:g-PDE}
\Delta u = g(u),
\end{equation}
which will be used in Section \ref{sec:nearly-maximal}.
We assume that the ``non-linearity'' $g$ is a non-negative increasing convex function which satisfies the Keller-Osserman condition \cite{keller,osserman}
\begin{equation}
\label{eq:keller}
\int_1^\infty \frac{ds}{\sqrt{G(s)}} < \infty,
\end{equation}
where $G' = g$.
Examples of $g$ satisfying the above conditions include $g(t) = e^{2t}$ and $g(t) = t^p \cdot \chi_{t > 0}$ with $p > 1$.

\subsection{Basic properties}

\label{sec:basic-PDE}

\subsubsection*{Traces}

Given a function $\phi$ on the unit disk, we define its {\em boundary trace} as the weak limit of the measures
$\phi(re^{i\theta}) d\theta$ as $r \to 1$, provided that the limit exists. Otherwise, we say that $\phi$ does not possess a boundary trace.

\subsubsection*{Sub- and super-solutions}

  One says that a function $v: \mathbb{D} \to \mathbb{R}$ is a {\em subsolution} of (\ref{eq:g-PDE}) if $\Delta v \ge g(v)$ in the sense of distributions. Similarly, we say that $v$ is a {\em supersolution} if $\Delta v \le g(v)$ in the sense of distributions.
  
  \begin{theorem}[Principle of sub- and supersolutions]
Suppose $u_-$ is a subsolution and $u_+$ is a supersolution of (\ref{eq:g-PDE}) with $u_-(z) \le u_+(z)$ for any $z \in \mathbb{D}$. Then, there exists at least one solution $u(z)$ with
$$
u_-(z) \, \le \, u(z) \, \le \, u_+(z), \qquad z \in \mathbb{D}.
$$
  \end{theorem}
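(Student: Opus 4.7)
The plan is to run a monotone iteration on an exhaustion $\Omega_{r_n} = \{|z|<r_n\}$ of $\mathbb{D}$ with $r_n \nearrow 1$, solving on each sub-disk and then passing to the limit. I assume that the given sub- and super-solutions are regular enough (say continuous on $\overline{\Omega_{r_n}}$) for the construction to proceed classically; the general case follows by standard approximation.

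Fix $n$ and write $\Omega = \Omega_{r_n}$. On $\overline{\Omega}$ both $u_\pm$ are bounded, so convexity of $g$ allows us to pick $K > 0$ larger than the Lipschitz constant of $g$ on the interval $[\inf_{\overline\Omega} u_-,\, \sup_{\overline\Omega} u_+]$. Then $h(t) := g(t) - Kt$ is non-increasing on that interval. Set $v_0 := u_+|_{\overline\Omega}$ and define $v_{k+1}$ iteratively as the solution of the linear Dirichlet problem
$$
\Delta v_{k+1} - K v_{k+1} \, = \, g(v_k) - K v_k \ \ \text{in } \Omega, \qquad v_{k+1}|_{\partial \Omega} \, = \, u_+|_{\partial \Omega}.
$$
The operator $\Delta - K$ satisfies the weak maximum principle, so induction on $k$ (using the monotonicity of $h$ and the sub/supersolution properties of $u_\pm$) yields the chain
$$
u_-(z) \, \le \, v_{k+1}(z) \, \le \, v_k(z) \, \le \, u_+(z), \qquad z \in \Omega.
$$
Indeed, $\Delta(v_{k+1}-v_k) - K(v_{k+1}-v_k) = h(v_k) - h(v_{k-1}) \ge 0$ with vanishing boundary data forces $v_{k+1} \le v_k$, and comparison with $u_-$ is analogous since $\Delta u_- - K u_- \ge h(u_-) \ge h(v_k)$.

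The pointwise monotone limit $u_\Omega := \lim_k v_k$ is bounded, and by dominated convergence $g(v_k) \to g(u_\Omega)$ boundedly; interior elliptic regularity then shows that $u_\Omega$ is a classical solution of $\Delta u_\Omega = g(u_\Omega)$ in $\Omega$, sandwiched between $u_-$ and $u_+$. Carrying this out for each $n$ produces solutions $u_n$ on $\Omega_{r_n}$ with $u_- \le u_n \le u_+$. On any compact $K \subset \mathbb{D}$ the $u_n$ are eventually defined and uniformly bounded, hence so are the right-hand sides $g(u_n)$; interior Schauder estimates then furnish uniform local $C^{2,\alpha}$ bounds. A diagonal Arzela-Ascoli extraction yields a subsequence $u_{n_j} \to u$ converging locally uniformly together with derivatives, and $u$ solves (\ref{eq:g-PDE}) with $u_- \le u \le u_+$ as desired.

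The principal technical obstacle is the bookkeeping of boundary regularity at each step: the linear Dirichlet problem must be solvable with the prescribed data $u_+|_{\partial \Omega}$, and the successive iterates must be smooth enough for the maximum principle argument. This can be handled either by choosing the radii $r_n$ in a co-null set so that $u_\pm$ are continuous on $\partial \Omega_{r_n}$, or by approximating $u_+|_{\partial \Omega}$ from above by smooth data and taking a further limit after the main iteration converges. I note that the Keller-Osserman condition (\ref{eq:keller}) plays no role in this argument; its relevance lies elsewhere, in guaranteeing the existence of a maximal (universal) supersolution when none is given a priori.
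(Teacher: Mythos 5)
Your argument is correct in substance, and it takes a genuinely different route from the one the paper points to. The paper does not prove the principle itself but cites Ponce's book, which derives it via the Schauder fixed point theorem applied to the operator that solves the linearized Dirichlet problem. You instead use the classical monotone iteration scheme, shifting the nonlinearity by $Kt$ on each compact exhaustion set so that $g(t)-Kt$ becomes monotone on the relevant range, and then extracting a limit by Schauder interior estimates and a diagonal argument. The monotone iteration is more elementary and constructive (no topological fixed point machinery), and it gives the candidate solution as a decreasing limit of explicit iterates; the Schauder approach is more abstract but dispatches the passage from sub-disks to $\mathbb{D}$ and the regularity bookkeeping in one stroke, since one sets up the fixed point problem on an appropriate function space over the whole disk. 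You are also right that the Keller--Osserman condition is irrelevant here; it is needed only for building the maximal solution, not for the sandwich principle.

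Two small caveats worth recording. First, for the iteration to be well-posed you need $u_\pm$ bounded on the compact exhaustion sets and their restrictions to $\partial\Omega_{r_n}$ to make sense; distributional sub/supersolutions need not a priori have such traces, so your remark about passing to a co-null set of radii (or mollifying the boundary data) is a necessary hypothesis rather than an optional convenience. In the present paper this is harmless because the sub/supersolutions that actually arise (e.g.\ $u_{\max}-P_\mu$) are continuous on $\mathbb{D}$. Second, you use ``convexity of $g$ allows us to pick $K$ larger than the Lipschitz constant''; more precisely, a convex increasing $g$ is locally Lipschitz, and one takes $K\ge g'_+$ at the sup of $u_+$ on $\overline{\Omega_{r_n}}$; this constant genuinely depends on $n$, which is why the shift must be redone on each sub-disk rather than once globally. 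With these points acknowledged, the proof stands.
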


A proof using the Schauder fixed point theorem can be found in \cite[Chapter 20]{ponce-book}.

\subsubsection*{Existence of solutions and the comparison principle}

\begin{theorem}
\label{l1theorem}
Given a function $h \in L^\infty(\partial \mathbb{D})$,  the boundary value problem
\begin{equation}
\label{eq:generalized}
   \left\{\begin{array}{lr}
        \Delta u  =  g(u), &  \text{in } \mathbb{D}, \\
       u = h, &  \text{on } \partial \mathbb{D},
        \end{array}\right.
\end{equation}
admits a unique solution, where the boundary values are interpreted in the sense of weak limits of measures. If $u_1$ and $u_2$ are two solutions with boundary values $h_1 \le h_2$, then  $u_1 \le u_2$ on $\mathbb{D}$.
\end{theorem}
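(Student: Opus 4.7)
My plan is to treat comparison (and hence uniqueness) first and then obtain existence by approximation combined with the principle of sub- and supersolutions already cited from \cite{ponce-book}.

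\textbf{Comparison.} Given two solutions $u_1, u_2$ of (\ref{eq:generalized}) with boundary traces $h_1 \le h_2$, set $w = u_1 - u_2$. On the set $\{w > 0\}$, monotonicity of $g$ forces $\Delta w = g(u_1) - g(u_2) \ge 0$. Kato's inequality for the distributional Laplacian then shows that $w^+$ is subharmonic on $\mathbb{D}$. Because the weak trace operator respects weak-$\ast$ limits of measures, the trace of $w$ equals $h_1 - h_2 \le 0$, which forces the trace of $w^+$ to be the zero measure. A bounded subharmonic function whose circular means $\frac{1}{2\pi}\int_0^{2\pi} w^+(re^{i\theta})\,d\theta$ tend to $0$ as $r \to 1$ must be $\le 0$ everywhere, giving $u_1 \le u_2$. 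Uniqueness is the special case $h_1 = h_2$.

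\textbf{Existence and passage to the limit.} Approximate $h$ by continuous $h_n$ with $\|h_n\|_\infty \le \|h\|_\infty$ and $h_n \to h$ weak-$\ast$. The harmonic extension $h_n^\ast$ is a supersolution, since $\Delta h_n^\ast = 0 \le g(h_n^\ast)$; a matching subsolution lying below $h_n^\ast$ is obtained either as the constant $-\|h\|_\infty$ when $g$ vanishes on a left half-line (e.g.~$g(t) = t^p \chi_{t > 0}$), or in general by solving the radial ODE $v''(r) + r^{-1}v'(r) = g(v(r))$ with $v(1) = -\|h\|_\infty$. The principle of sub- and supersolutions then produces a bounded classical solution $u_n$ with boundary trace $h_n$. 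The Keller-Osserman condition (\ref{eq:keller}) supplies an upper barrier $U(z)$ depending only on $\dist(z, \partial \mathbb{D})$ that dominates every $u_n$, so $\{u_n\}$ and $\{g(u_n)\}$ are uniformly bounded on compact subsets of $\mathbb{D}$. Elliptic regularity then yields uniform $C^{1,\alpha}$ bounds on compact subsets, and a diagonal extraction gives a subsequence converging locally uniformly to some $u$ satisfying $\Delta u = g(u)$. To identify the trace of $u$ with $h$, decompose $u_n = h_n^\ast - G[g(u_n)]$, where $G$ is the Green potential for the disk; a uniform bound on $\int_{\mathbb{D}} (1-|\zeta|)\, g(u_n(\zeta))\, dA(\zeta)$, another consequence of Keller-Osserman applied near the boundary, shows that $G[g(u_n)]$ has vanishing weak trace, and combining this with $h_n^\ast \to h$ gives that $u$ has boundary trace $h$.

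\textbf{Main obstacle.} The delicate step is obtaining the uniform $L^1((1-|\zeta|)\,dA)$ control of $g(u_n)$ near $\partial \mathbb{D}$: Keller-Osserman only supplies pointwise control of $u_n$ in terms of $\dist(z, \partial \mathbb{D})$, and matching the upper and lower barriers precisely enough that their difference integrates against $(1-|\zeta|)$ uniformly in $n$ is the subtle part. This is ultimately what ensures that the weak-$\ast$ limit of boundary traces is compatible with the local uniform limit of solutions, and thereby closes the loop between approximation and weak boundary trace.
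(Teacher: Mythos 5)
Your comparison step is essentially the paper's: Kato's inequality turns $(u_1-u_2)^+$ into a non\-negative subharmonic function with zero boundary trace, and the maximum principle closes the argument. Where you diverge is in the existence step, and here your route is substantially more elaborate than it needs to be. The paper never approximates $h$ by continuous data. It instead exhibits, directly for the given $h$, a bounded subsolution and a bounded supersolution that \emph{both} have boundary trace $h$, and then invokes the sub\-/supersolution principle once. Concretely, $u_+ = P_h$ is a supersolution since $g \ge 0$, and
$$
u_-(z) \, = \, P_h(z) \, - \, \frac{g(\|h\|_\infty)}{2\pi}\int_{\mathbb{D}} G(z,\zeta)\,dA(\zeta)
$$
is a subsolution because $u_- \le P_h \le \|h\|_\infty$ implies $\Delta u_- = g(\|h\|_\infty) \ge g(u_-)$; the subtracted Green potential is a bounded continuous function vanishing on $\partial\mathbb{D}$, so $u_-$ has the same boundary trace as $P_h$, namely $h$. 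Squeezing gives a solution with trace $h$ outright.

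This also defuses the step you flag as the ``main obstacle.'' You do not need any Keller--Osserman barrier, because the sub\-/supersolution sandwich already yields a \emph{uniform two-sided bound} $u_- \le u_n \le \|h\|_\infty$ with $u_-$ bounded below by an $n$-independent constant; consequently $g(u_n)$ is uniformly bounded on all of $\mathbb{D}$, and the $L^1\bigl((1-|\zeta|)\,dA\bigr)$ control of $g(u_n)$ is trivial. The appeal to Keller--Osserman to dominate $u_n$ replaces a uniform constant by a barrier that blows up near the boundary, which is both unnecessary and the source of the difficulty you perceived. Similarly, the case split for your subsolution (constant versus the backward radial ODE shooting problem) is avoided entirely by the explicit Green potential formula above, which works for any admissible $g$ without requiring $g$ to vanish on a half-line and without a shooting argument. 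In short: your proof is correct in outline but detours through an approximation and compactness argument whose hardest step disappears once the right barriers are written down, which is precisely what the paper does.
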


\begin{proof}[Proof of Theorem \ref{l1theorem}] {\em Step 1. Uniqueness and monotonicity.}
By Kato's inequality \cite[Proposition 6.9]{ponce-book},
$$
\Delta (u_1-u_2)^+ \ge \Delta (u_1- u_2) \cdot \chi_{\{u_1 > u_2\}} = (g(u_1) - g(u_2)) \cdot \chi_{\{u_1 > u_2\}} \ge 0
$$
is a subharmonic function. As $h_1 \le h_2$, the function $(u_1-u_2)^+$ has zero boundary values. The maximum principle shows that $(u_1-u_2)^+ \le 0$ or $u_1
 \le u_2$. The same argument also proves uniqueness.

\medskip

{\em Step 2. Existence.}
Let $P_h$ denote the harmonic extension of $h$ to the unit disk. Clearly, $u_+ = P_h$ is a supersolution of (\ref{eq:g-PDE}) with boundary data $h$. Similarly, if  $G(z, w) = \log \bigl | \frac{1-\overline{w}z}{w-z} \bigr |$ is the Green's function of the unit disk, then
\begin{equation*}
u_-(z) = P_h(z) - \frac{1}{2\pi} \int_{\mathbb{D}} g(\| h \|_\infty) G(z, \zeta) dA(\zeta)
\end{equation*}
is a subsolution of (\ref{eq:g-PDE}) as
$$
\Delta u_-(z) =  g(\| h \|_\infty) \ge g(u_-(z)).
$$
Since $u_-$ also has boundary trace $h$, by the principle of sub- and supersolutions, there exists a solution with boundary trace $h$.
\end{proof}

\subsubsection*{The maximal solution}

\begin{lemma}
The PDE (\ref{eq:g-PDE}) has a unique maximal solution $u_{\max}$ on the unit disk, which dominates all other solutions pointwise.
\end{lemma}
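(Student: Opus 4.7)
The plan is to build $u_{\max}$ as a decreasing limit of ``large solutions'' on sub-disks. For each $0 < r < 1$ and $n \in \mathbb{N}$, let $u_n^{(r)}$ denote the unique solution of $\Delta u = g(u)$ on $B(0,r)$ with constant boundary trace $n$; this exists by applying the argument of Theorem \ref{l1theorem} to the disk $B(0,r)$. The comparison principle gives $u_n^{(r)} \le u_{n+1}^{(r)}$, and the Keller-Osserman condition (\ref{eq:keller}) supplies a radial supersolution on $B(0,r)$ blowing up at $\partial B(0,r)$, yielding a uniform interior bound $u_n^{(r)}(z) \le F(r - |z|)$ independent of $n$. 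Hence $u_\infty^{(r)} := \lim_{n \to \infty} u_n^{(r)}$ is finite on $B(0,r)$, and by standard interior elliptic estimates applied to the equation $\Delta u_n^{(r)} = g(u_n^{(r)})$ it is itself a classical solution that blows up at $\partial B(0,r)$.

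Next I would establish two monotonicity properties. For any solution $v$ of (\ref{eq:g-PDE}) on $\mathbb{D}$, the Keller-Osserman bound makes $v$ bounded on the compact arc $\partial B(0, r)$; once $n$ exceeds this bound, comparison on $B(0,r)$ gives $v \le u_n^{(r)}$, whence $v \le u_\infty^{(r)}$. Similarly, if $r_1 < r_2$, then $u_\infty^{(r_2)}$ is a solution on $B(0, r_1)$ which is bounded on $\partial B(0,r_1)$, so the same argument gives $u_\infty^{(r_2)} \le u_\infty^{(r_1)}$ on $B(0,r_1)$. Thus the family $\{u_\infty^{(r)}\}$ is pointwise decreasing in $r$ and bounded below by any fixed solution produced by Theorem \ref{l1theorem} (for instance with $h \equiv 0$), so one can define
\begin{equation*}
u_{\max}(z) \, := \, \lim_{r \to 1^-} u_\infty^{(r)}(z),
\end{equation*}
and obtain at once that $u_{\max}$ dominates every solution on $\mathbb{D}$.

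To finish, I would check that $u_{\max}$ is itself a solution. Since the $u_\infty^{(r)}$ are solutions that are locally uniformly bounded above and below, standard interior Schauder or $L^p$ estimates for $\Delta u_\infty^{(r)} = g(u_\infty^{(r)})$ promote the monotone pointwise convergence to convergence in $C^{2,\alpha}_{\mathrm{loc}}(\mathbb{D})$, and the limit satisfies the equation classically. Uniqueness is automatic: any two maximal solutions dominate each other and therefore coincide.

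The main obstacle is the first step, namely producing the $n$-independent Keller-Osserman barrier on each $B(0,r)$: one must use (\ref{eq:keller}) to build a radial supersolution $U(|z|)$ on $B(0,r)$ with $U \to \infty$ at $\partial B(0,r)$, and then apply the comparison principle. Once this is done, everything else is a routine exercise in the maximum principle and elliptic regularity.
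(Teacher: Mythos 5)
Your proof is correct and follows essentially the same strategy as the paper's sketch: both construct the maximal solution on sub-disks $\mathbb{D}_R$ and then compare, with the Keller--Osserman condition as the essential input. The paper simply cites Keller and Osserman for the existence (and continuous dependence on $R$) of the radially-invariant large solution $u_R$ on $\mathbb{D}_R$ and lets $S \to R$, whereas you build $u_\infty^{(r)}$ from scratch as the increasing limit of the bounded solutions $u_n^{(r)}$, using the Keller--Osserman radial barrier to make the limit finite and interior elliptic estimates to see that it solves the equation and blows up at $\partial \mathbb{D}_r$. Your route is a bit more self-contained and sidesteps the continuity claim $u_S \to u_R$ by replacing it with monotone convergence in $r$, at the harmless cost of invoking interior Schauder or $L^p$ estimates twice (once to pass to the limit in $n$, once in $r$).
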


\begin{proof}[Sketch of proof]
We will simultaneously show that (\ref{eq:g-PDE}) has a maximal solution on every disk $\mathbb{D}_R = \{z: |z| < R \}$ with $R > 0$.

Keller \cite{keller} and Osserman \cite{osserman} showed that under the assumption (\ref{eq:keller}), for any $R > 0$, there is a unique radially-invariant solution $u_R(z)$ on  $\mathbb{D}_R$ which tends to infinity as $|z| \to R$, and furthermore, the solutions $u_R(z)$ depend continuously on $R$.

Suppose $u: \mathbb{D}_R \to \mathbb{R}$ is any solution of (\ref{eq:g-PDE}). By the comparison principle, for any $S < R$, $u(z) < u_S(z)$ on $\mathbb{D}_S$. Taking $S \to R$ yields $u(z) \le u_R(z)$.
\end{proof}

The above argument shows that if $u$ is a solution of (\ref{eq:g-PDE}) on the unit disk which tends to infinity as $|z| \to 1$, then $u = u_{\max}$. As a consequence, the solutions $u_n$ of  (\ref{eq:g-PDE}) with constant boundary values $n$ increase to $u_{\max}$ as $n \to \infty$.

\begin{remark}
For the existence and uniqueness of large solutions of semi-linear equations on other domains, we refer the reader to \cite{BM,garcia-melian}. Information about the asymptotic behaviour of these solutions near the boundary can be found in \cite{BM2, BM3, DL, LM}. 
\end{remark}

\subsubsection*{Minimal dominating solution}
 
 Let $v$ be a subsolution of (\ref{eq:g-PDE}).
For $0 < r < 1$, we write
$\Lambda_r[v]$ for the unique solution of  (\ref{eq:g-PDE}) on the disk $\mathbb{D}_r = \{z: |z| < r\}$ which agrees with $v$ on $\partial \mathbb{D}_r$. An inspection of Step 1 of the proof of Theorem \ref{l1theorem} shows that  $\Lambda_r[v]$ is the pointwise-minimal solution which lies above $v$ on $\mathbb{D}_r$. In particular, the solutions $\Lambda_r[v]$ are increasing in $r$. 
The limit $\Lambda[v] := \lim_{r \to 1} \Lambda_r[v]$ is finite on the unit disk because it is bounded above by the maximal solution.

For any test function $\phi \in C_c^\infty(\mathbb{D})$, we have
$$
\int_{\mathbb{D}_r} u_r \Delta \phi \, dA = \int_{\mathbb{D}_r} g(u_r) \phi \, dA, \qquad u_r = \Lambda_r[v],
$$
provided that $\mathbb{D}_r$ contains $\supp \phi$ in its interior. After taking $r \to 1$ and using the dominated convergence theorem, it follows that $\Lambda[v]$ is a solution of (\ref{eq:g-PDE}).
From the construction, it is clear that $\Lambda[v]$ is the pointwise-minimal solution which satisfies $\Lambda[v] \ge v$.

\begin{remark}
This construction generalizes the notion of the {\em minimal harmonic majorant} for subharmonic functions on the unit disk. One small but important difference is that the minimal harmonic majorant does not always exist (i.e.~may be identically $+\infty$).
\end{remark}

\subsection{Nearly-maximal solutions}  

 A solution of (\ref{eq:g-PDE}) is called {\em nearly-maximal} if
\begin{equation}
\limsup_{r \to 1} \int_{|z| = r} (u_{\max} - u) d\theta < \infty.
\end{equation}
For each $0 < r < 1$, we may view $(u_{\max} - u) d\theta$ as a positive measure on the circle of radius $r$. Subharmonicity guarantees the existence of a weak limit as $r \to 1$, so we obtain a measure
$\mu[u]$ on the unit circle associated to $u$. We refer to $\mu$ as the {\em deficiency measure} of $u$.

Notice that if $\mu \ge 0$ is a measure on the unit circle and $P_\mu$ is its Poisson extension to the unit disk, then $\Lambda[u_{\max}-P_\mu]$ is a nearly-maximal solution. Clearly, the deficiency measure $\nu$ of  $\Lambda[u_{\max}-P_\mu]$ is at most $\mu$. 

\begin{lemma}[Fundamental lemma]
\label{fundamental-lemma}
If $u$
 is a nearly-maximal solution of (\ref{eq:g-PDE}) with deficiency measure $\mu$, then $u = \Lambda[u_{\max} - P_\mu]$.
\end{lemma}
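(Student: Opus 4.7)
The plan is to prove the two inequalities $u \ge w$ and $u \le w$, where $w := \Lambda[u_{\max} - P_\mu]$. First I would observe that $v := u_{\max} - u$ is a nonnegative subharmonic function: $v \ge 0$ because $u \le u_{\max}$, and $\Delta v = g(u_{\max}) - g(u) \ge 0$ by monotonicity of $g$. The nearly-maximal hypothesis is precisely that the circular integrals $\int_{|z|=r} v\, d\theta$ remain bounded as $r \to 1$, so $v$ admits a least harmonic majorant, which coincides with $P_\mu$ (where $\mu$ is the weak limit of $v(re^{i\theta})\, d\theta$, i.e.~the deficiency measure). This yields $v \le P_\mu$, equivalently $u \ge u_{\max} - P_\mu$. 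Noting moreover that $u_{\max} - P_\mu$ is a subsolution -- indeed $\Delta(u_{\max} - P_\mu) = g(u_{\max}) \ge g(u_{\max} - P_\mu)$ by monotonicity of $g$ -- the quantity $w = \Lambda[u_{\max} - P_\mu]$ is well defined in the sense of Section \ref{sec:basic-PDE}.

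To obtain $u \ge w$, I would apply the comparison principle of Theorem \ref{l1theorem} on each disk $\mathbb{D}_r$. With $w_r := \Lambda_r[u_{\max} - P_\mu]$, the restriction of $u$ to $\mathbb{D}_r$ is a solution whose boundary values on $\partial\mathbb{D}_r$ satisfy $u \ge u_{\max} - P_\mu = w_r$, and $w_r$ is the minimal solution above $u_{\max} - P_\mu$ on $\mathbb{D}_r$; comparison gives $u \ge w_r$ on $\mathbb{D}_r$, and letting $r \to 1$ yields $u \ge w$ on $\mathbb{D}$.

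For the reverse inequality, I would first pin down the deficiency measure of $w$. The inequality $w \ge u_{\max} - P_\mu$ gives $u_{\max} - w \le P_\mu$, so $w$ is itself nearly-maximal with some deficiency $\nu \le \mu$ (as already noted in the excerpt). Conversely, $u \ge w$ implies $u_{\max} - u \le u_{\max} - w$, so by comparing traces $\mu \le \nu$. Hence $\nu = \mu$. Now consider $u - w \ge 0$: it is subharmonic since $\Delta(u - w) = g(u) - g(w) \ge 0$, and its weak boundary trace is $\mu - \nu = 0$. For a nonnegative subharmonic function the map $r \mapsto \int_{|z|=r}(u-w)\, d\theta$ is monotone increasing, and weak convergence of nonneg measures to the zero measure forces total mass to tend to zero; these two facts together force this circular integral to vanish for every $r$, and therefore $u - w \equiv 0$.

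I expect the main technical point is the Riesz-type step $v \le P_\mu$, i.e.~that the weak limit of $v(re^{i\theta})\, d\theta$ is the boundary measure of the least harmonic majorant of $v$. This is classical for nonneg subharmonic functions of bounded characteristic (Riesz decomposition together with Fatou's theorem), and everything else reduces to routine applications of Kato's inequality and the comparison principle already invoked in Theorem \ref{l1theorem}.
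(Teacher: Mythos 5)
Your proposal is correct and follows essentially the same route as the paper's proof: Step~1 of the paper proves $u \ge u_{\max}-P_\mu$ by the maximum principle applied to the subharmonic function $u_{\max}-u-P_\mu$ with zero boundary trace (the same Riesz/least-harmonic-majorant fact you spell out), which yields $u \ge \Lambda[u_{\max}-P_\mu]$ by minimality of $\Lambda$; Step~2 squeezes the deficiency measure $\nu$ of $\Lambda[u_{\max}-P_\mu]$ between $\mu$ and $\mu$ exactly as you do; and Step~3 concludes as you do from the fact that $u - \Lambda[u_{\max}-P_\mu]$ is nonnegative, subharmonic and has zero trace. Your rendition is somewhat more explicit about the Riesz-decomposition step and the disk-exhaustion argument, but there is no substantive difference in the argument.
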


\begin{proof}
{\em Step 1.} Observe that $u_{\max} - P_\mu$ is a subsolution since
$$
\Delta (u_{\max} - P_\mu) \, = \, g(u_{\max}) \, \ge \, g(u_{\max} - P_\mu).
$$
We claim that $u \ge u_{\max} - P_\mu$ and thus $u \ge \Lambda [u_{\max} - P_\mu]$.
To this end, we consider the function
$$
\phi := u_{\max} - u - P_\mu.
$$
Since $\phi$ is a subharmonic function with zero boundary trace, by the maximum principle, 
 $\phi \le 0$ in the unit disk.

\medskip

{\em Step 2.} 
As $v := \Lambda[u_{\max} - P_\mu]$ is a nearly-maximal solution, it possesses a deficiency measure $\nu$. 
From Step 1, we know that
$$
u \, \ge \, v \, = \, \Lambda[u_{\max} - P_\mu] \, \ge \, u_{\max} - P_\mu.
$$
After rearranging, we get
$$u_{\max} - u \, \le \, u_{\max} - v \, \le \, P_\mu.$$
Taking the weak limit as $r \to 1$, we see that $\nu = \mu$.

\medskip

{\em Step 3.} Finally, since $u - v$ is a non-negative subharmonic function with zero boundary trace,
$u = v$. 
\end{proof}

In particular, Lemma \ref{fundamental-lemma} shows that the deficiency measure $\mu$ uniquely determines the nearly-maximal solution $u$. Below, we will write $u_{\mu}$ for the nearly-maximal solution associated to the measure $\mu$, if it exists. Another simple consequence of 
Lemma \ref{fundamental-lemma} is the {\em monotonicity principle} for nearly-maximal solutions:

\begin{corollary}[Monotonicity principle]
\label{monotonicity-principle2} If  $\nu < \mu$ then $u_{\nu} > u_{\mu}$.
\end{corollary}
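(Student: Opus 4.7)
The plan is to combine the representation $u_\mu = \Lambda[u_{\max} - P_\mu]$ provided by the fundamental lemma with a strong-maximum-principle argument for the linearized equation.

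For the weak inequality $u_\nu \ge u_\mu$, I will exploit monotonicity of the operator $\Lambda$. Since $\nu \le \mu$, the Poisson extensions satisfy $P_\nu \le P_\mu$, whence
$$u_{\max} - P_\nu \ge u_{\max} - P_\mu$$
pointwise on $\mathbb{D}$. Both sides are subsolutions of $\Delta u = g(u)$, since $g$ is increasing and $P_\sigma \ge 0$ give $\Delta(u_{\max} - P_\sigma) = g(u_{\max}) \ge g(u_{\max} - P_\sigma)$. Now $\Lambda[u_{\max} - P_\nu]$ is a solution of the PDE lying above the larger subsolution, hence above the smaller one as well, so by the minimality characterization of $\Lambda$,
$$u_\nu = \Lambda[u_{\max} - P_\nu] \ge \Lambda[u_{\max} - P_\mu] = u_\mu.$$

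To upgrade this to strict inequality, set $w := u_\nu - u_\mu \ge 0$ and write
$$\Delta w \, = \, g(u_\nu) - g(u_\mu) \, = \, c(z)\,w(z),$$
where $c(z)$ is the difference quotient $[g(u_\nu(z)) - g(u_\mu(z))]/w(z)$, extended by zero on the set $\{w=0\}$. The convexity of $g$ gives local Lipschitz continuity on the range of $u_\mu, u_\nu$, which is locally bounded above by the smooth function $u_{\max}$; hence $c(z)$ is a locally bounded, non-negative function on $\mathbb{D}$. Thus $w \ge 0$ satisfies the linear elliptic equation $(\Delta - c(z))w = 0$, and the strong maximum principle forces $w$ to be either identically zero or strictly positive on $\mathbb{D}$.

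If $w \equiv 0$, then $u_\nu \equiv u_\mu$ and their deficiency measures necessarily agree, forcing $\nu = \mu$ and contradicting the hypothesis $\nu < \mu$. Hence $w > 0$ throughout $\mathbb{D}$, i.e.~$u_\nu > u_\mu$. The main obstacle is the justification of the strong maximum principle: one needs to know that the zeroth-order coefficient $c(z)$ is locally bounded, which is precisely why the convexity hypothesis on $g$ (yielding local Lipschitz continuity) is invoked. For the specific nonlinearities of interest in Theorem \ref{nearly-maximal}, such as $g(u) = u^p \cdot \chi_{u>0}$ with $p > 3$, the function $g$ is in fact $C^1$, so no additional care is required.
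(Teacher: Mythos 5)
Your proof of the weak inequality $u_\nu \ge u_\mu$ is exactly the argument the paper intends: invoke the Fundamental Lemma to write $u_\nu = \Lambda[u_{\max} - P_\nu]$ and $u_\mu = \Lambda[u_{\max} - P_\mu]$, note that $\nu \le \mu$ gives $u_{\max} - P_\nu \ge u_{\max} - P_\mu$, and then use the minimality characterization of $\Lambda$ to conclude $\Lambda[u_{\max} - P_\nu] \ge \Lambda[u_{\max} - P_\mu]$. The paper states the corollary without proof; the intended reading of strictness is presumably just that $u_\nu \ne u_\mu$ because equal solutions have equal deficiency measures, contradicting $\nu \ne \mu$. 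You have gone further and established strict pointwise inequality everywhere via a strong maximum principle for the linearized operator $-\Delta + c(z)$, which is a correct and more informative conclusion: $c(z) = (g(u_\nu) - g(u_\mu))/(u_\nu - u_\mu) \ge 0$ is locally bounded because $g$ is convex and finite (hence locally Lipschitz) and both solutions are continuous and sandwiched by $u_{\max}$ on compacta, so Hopf's principle applies and forces $w = u_\nu - u_\mu$ to be either identically zero or everywhere positive. One small remark: rather than extending $c$ by zero on $\{w=0\}$, it is slightly cleaner to set $c(z) = g'(u_\mu(z))$ there (when $g \in C^1$, as for $g(t)=t^p\chi_{t>0}$ with $p>1$), which makes $c$ continuous; either way, bounded measurable $c \ge 0$ suffices. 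In short, your proposal is correct and matches the paper's (implicit) approach, with a worthwhile refinement in the strictness step.
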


\subsection{Constructible and invisible measures}

We say that a measure $\mu$ on the unit circle is {\em invisible} if for any measure $0 < \nu \le \mu$, there does not exist a nearly-maximal solution $u_{\nu}$ with deficiency measure $\nu$. In this section, we show that any positive measure on the unit circle can be uniquely decomposed into a deficiency measure and an invisible measure. 

\begin{theorem}
\label{nu-mu2}
Suppose $\mu$ is a positive measure on the unit circle. If $u_\nu = \Lambda[u_{\max} - P_\mu]$, then $\nu$ is a deficiency measure and $\mu - \nu$ is an invisible measure.
\end{theorem}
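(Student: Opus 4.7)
The first claim is immediate from the setup: $u_\nu = \Lambda[u_{\max} - P_\mu]$ lies between $u_{\max} - P_\mu$ and $u_{\max}$, so $u_{\max} - u_\nu \le P_\mu$ is integrable on every circle; hence $u_\nu$ is nearly-maximal and its deficiency is, by definition, the measure $\nu$.

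For the harder claim that $\mu - \nu$ is invisible, I plan to argue by contradiction. Suppose there is $0 < \sigma \le \mu - \nu$ admitting a nearly-maximal solution $u_\sigma$ with deficiency $\sigma$; by the Fundamental Lemma, $u_\sigma = \Lambda[u_{\max} - P_\sigma]$. The strategy is to first establish the auxiliary claim that the sum of two deficiency measures is again a deficiency measure. Applied to $\nu$ and $\sigma$, this will produce a nearly-maximal solution $u_{\nu+\sigma}$ with deficiency $\nu + \sigma$. Then, since $\nu + \sigma \le \mu$ gives $P_{\nu+\sigma} \le P_\mu$, monotonicity of $\Lambda$ will give $u_{\nu+\sigma} = \Lambda[u_{\max} - P_{\nu+\sigma}] \ge \Lambda[u_{\max} - P_\mu] = u_\nu$. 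On the other hand, since $\nu < \nu + \sigma$ strictly (as $\sigma > 0$), the monotonicity principle (Corollary \ref{monotonicity-principle2}) will give $u_\nu > u_{\nu+\sigma}$, the desired contradiction.

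The auxiliary claim is the heart of the argument. To show that $\nu_1, \nu_2$ being deficiency measures implies that $\nu_1+\nu_2$ is one, I plan to sandwich a solution between the subsolution $w := u_{\max} - P_{\nu_1+\nu_2}$ and the supersolution $W := u_{\nu_1} + u_{\nu_2} - u_{\max}$. The subsolution property of $w$ is immediate. For the supersolution property of $W$, after setting $a_i := u_{\max} - u_{\nu_i} \ge 0$ one has to verify
$$
g(u_{\max} - a_1) + g(u_{\max} - a_2) - g(u_{\max}) \le g(u_{\max} - a_1 - a_2),
$$
which will follow from convexity of $t \mapsto g(u_{\max} - t)$ (for convex $h$, swapping the pair $(a_1,a_2)$ for the more spread-out pair $(0, a_1+a_2)$ with the same average increases the sum). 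The inequality $w \le W$ is exactly the sum of the two bounds $u_{\max} - u_{\nu_i} \le P_{\nu_i}$ from Step 1 of the Fundamental Lemma. The principle of sub- and supersolutions will then produce a solution $u^*$ with $w \le u^* \le W$, and taking weak limits of $(u_{\max} - u^*)(re^{i\theta})\,d\theta$ as $r \to 1$ will show that the deficiency of $u^*$ equals $\nu_1+\nu_2$: the upper bound on $u^*$ forces the deficiency to be $\ge \nu_1+\nu_2$ via $u_{\max} - u^* \ge (u_{\max} - u_{\nu_1}) + (u_{\max} - u_{\nu_2})$, while the lower bound on $u^*$ forces it to be $\le \nu_1+\nu_2$ via $u_{\max} - u^* \le P_{\nu_1+\nu_2}$.

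The main obstacle I anticipate is verifying the supersolution inequality for $W$; this is where convexity of $g$ enters crucially, and one must be careful since the reverse inequality is equally plausible-looking but would give $W$ as a subsolution and break the sandwich. Everything else then reduces to routine bookkeeping using monotonicity of $\Lambda$, the Fundamental Lemma, and the weak boundary behaviour of Poisson integrals.
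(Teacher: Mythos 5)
Your overall strategy coincides with the paper's: prove that the sum of two deficiency measures is a deficiency measure (this is the paper's Lemma~\ref{product-law}(i)), and then derive a contradiction from the existence of some $0<\sigma\le\mu-\nu$ that is a deficiency measure, using the Fundamental Lemma and the monotonicity principle. Your contradiction step is essentially the paper's, just phrased as a clash between $u_{\nu+\sigma}\ge u_\nu$ (from monotonicity of $\Lambda$ applied to $P_{\nu+\sigma}\le P_\mu$) and $u_\nu>u_{\nu+\sigma}$ (from Corollary~\ref{monotonicity-principle2}), whereas the paper phrases it as $u_{\nu+\omega}$ contradicting the minimality of $\Lambda[u_{\max}-P_\mu]$; these are the same observation. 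Where you genuinely depart is in the proof of the auxiliary claim. The paper sets $\phi=(A+D-B-C)^+$ with $A=u_{\max}$, $B=u_{\nu_1}$, $C=u_{\nu_2}$, $D=\Lambda[u_{\max}-P_{\nu_1+\nu_2}]$, shows $\phi$ is non-negative and subharmonic via Kato's inequality and the same convexity estimate you use, and then forces $\phi\equiv0$ by passing to truncations $\Lambda_r$ and a limiting argument, because the boundary trace of $\phi$ is hard to control directly. You instead verify that $w=u_{\max}-P_{\nu_1+\nu_2}$ is a subsolution, that $W=u_{\nu_1}+u_{\nu_2}-u_{\max}$ is a supersolution (via the convexity inequality $h(a_1)+h(a_2)\le h(0)+h(a_1+a_2)$ for $h(t)=g(u_{\max}-t)$), and that $w\le W$ by Step~1 of the Fundamental Lemma; the principle of sub- and supersolutions then hands you a solution $u^*$ with $w\le u^*\le W$, and the squeeze $(u_{\max}-u_{\nu_1})+(u_{\max}-u_{\nu_2})\le u_{\max}-u^*\le P_{\nu_1+\nu_2}$ identifies its deficiency as $\nu_1+\nu_2$ directly. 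This is essentially the same inequality $D\le W$ the paper proves, but you produce the solution constructively and read off its trace by sandwiching, which sidesteps the paper's $\Lambda_r$-approximation. Both arguments are sound; yours is slightly more self-contained at this step, trading the Kato/approximation machinery for a single invocation of the sub-supersolution principle.
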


In particular, a measure $\mu$ is invisible if and only if $\Lambda[u_{\max} - P_\mu] = u_{\max}$.
We will break the proof Theorem \ref{nu-mu2} into a series of lemmas.

\begin{lemma}
\label{division-law}
 If a measure $\mu$ is a deficiency measure, then any measure $0 \le \mu_1 \le \mu$ is also a deficiency measure.
\end{lemma}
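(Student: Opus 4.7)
The plan is to write $\mu_1 \le \mu$ as $\mu_1 = \mu - \mu_2$, and to show that the canonical candidate $v := \Lambda[u_{\max} - P_{\mu_1}]$ produced by the fundamental construction has deficiency measure exactly $\mu_1$. By Lemma~\ref{fundamental-lemma}, $v$ is automatically nearly-maximal with \emph{some} deficiency measure $\nu_1$, and one inequality comes for free: since $v \ge u_{\max} - P_{\mu_1}$, we have $u_{\max} - v \le P_{\mu_1}$, so taking the weak limit as $r \to 1$ yields $\nu_1 \le \mu_1$. The content of the proof is the reverse inequality $\nu_1 \ge \mu_1$.

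For that, the key idea is to build a supersolution with the right boundary trace from the given nearly-maximal solution $u_\mu$. Set $w := u_\mu + P_{\mu_2}$. Because $g$ is increasing and $P_{\mu_2} \ge 0$,
$$
\Delta w = g(u_\mu) \le g(u_\mu + P_{\mu_2}) = g(w),
$$
so $w$ is a supersolution of (\ref{eq:g-PDE}). Moreover, from $u_\mu \ge u_{\max} - P_\mu$ and $P_\mu = P_{\mu_1} + P_{\mu_2}$ we get
$$
w = u_\mu + P_{\mu_2} \ge u_{\max} - P_{\mu_1}.
$$
Thus on every disk $\mathbb{D}_r$ the supersolution $w$ dominates the subsolution $u_{\max} - P_{\mu_1}$ on the boundary $\partial \mathbb{D}_r$. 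A Kato-type argument exactly as in Step~1 of the proof of Theorem~\ref{l1theorem} applied to $\Lambda_r[u_{\max} - P_{\mu_1}] - w$ shows that $\Lambda_r[u_{\max} - P_{\mu_1}] \le w$ on $\mathbb{D}_r$; letting $r \to 1$ gives $v \le w$ on all of $\mathbb{D}$.

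Rearranging,
$$
u_{\max} - v \ge u_{\max} - u_\mu - P_{\mu_2}.
$$
The right-hand side is non-negative (by $u_\mu \le u_{\max} - P_\mu + P_\mu \le u_{\max}$ and the previous inequality combined, or directly because $w \le u_{\max}$), and its boundary trace is the linear combination $\mu - \mu_2 = \mu_1$. Since weak limits of measures respect inequalities, $\nu_1 \ge \mu_1$, which combined with the reverse inequality gives $\nu_1 = \mu_1$. Thus $\mu_1$ is the deficiency measure of $v = u_{\mu_1}$.

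The step I expect to require the most care is the passage $v \le w$: we cannot invoke the minimality of $\Lambda[\,\cdot\,]$ directly against a supersolution, so we must pass through the finite-radius approximants $\Lambda_r[u_{\max}-P_{\mu_1}]$ and use the Kato inequality comparison on each $\mathbb{D}_r$ before letting $r \to 1$. Everything else (boundary traces are weak-$*$ limits and so respect finite linear combinations; $u_\mu + P_{\mu_2}$ is a supersolution by monotonicity of $g$; the construction of $\Lambda[\cdot]$) is already in place in the excerpt.
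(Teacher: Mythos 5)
Your proposal is correct and follows essentially the same route as the paper: you establish the key inequality $\Lambda[u_{\max}-P_{\mu_1}] \le u_\mu + P_{\mu_2}$ (equivalently $u_\mu \ge \Lambda[u_{\max}-P_{\mu_1}] - P_{\mu_2}$, which is the form the paper states) via the Kato comparison on finite disks, then take boundary traces. The only cosmetic difference is that you package the comparison around the supersolution $w = u_\mu + P_{\mu_2}$, whereas the paper phrases it in terms of the subsolution $\Lambda[u_{\max}-P_{\mu_1}] - P_{\mu_2}$, by analogy with Step~1 of Lemma~\ref{fundamental-lemma} and the argument of Lemma~\ref{one-at-a-time}; both are dual descriptions of the same $\mathbb{D}_r$-by-$\mathbb{D}_r$ maximum-principle step.
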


\begin{proof} To show that $\mu_1$ is a deficiency measure, we check that  $\mu_1 = \nu_1$ where $u_{\nu_1} = \Lambda[u_{\max}-P_{\mu_1}]$. Since the inequality $\nu_1 \le \mu_1$ is always true, we only need to prove the opposite inequality $\mu_1 \le \nu_1$.

Let $\mu_2 = \mu - \mu_1$.
 Using the same argument as in the proof of Lemma \ref{fundamental-lemma}, it is not difficult to show that $$
\Lambda[u_{\max}-P_{\mu_1+\mu_2}] \ge \Lambda[u_{\max}-P_{\mu_1}] - P_{\mu_2}
$$
or
$$
u_{\max} - \Lambda[u_{\max}-P_{\mu_1+\mu_2}] \le u_{\max} -
\Lambda[u_{\max}-P_{\mu_1}] + P_{\mu_2}.
$$
Taking traces, we see that 
$\mu_1 + \mu_2 \le \nu_1 + \mu_2$ or 
 $\mu_1 \le \nu_1$  as desired.
\end{proof}

\begin{lemma}
\label{product-law}
{\em (i)} The sum of two deficiency measures is a deficiency measure. 

{\em (ii)} Suppose $\mu_i$, $i=1,2,3,\dots$ are deficiency measures such that their sum $\mu = \sum \mu_i$ is a finite measure. Then, $\mu$ is also a deficiency measure.
\end{lemma}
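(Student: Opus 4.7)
For part (i), the plan is to exhibit a nearly-maximal solution whose deficiency is precisely $\mu_1+\mu_2$, and then invoke Lemma \ref{fundamental-lemma} to conclude that $\mu_1+\mu_2$ is a deficiency measure. The natural candidate is
$$
\tilde u \,:=\, \Lambda[\,u_{\mu_1}-P_{\mu_2}\,].
$$
One first checks that $u_{\mu_1}-P_{\mu_2}$ is a subsolution (since $\Delta(u_{\mu_1}-P_{\mu_2}) = g(u_{\mu_1}) \ge g(u_{\mu_1}-P_{\mu_2})$ by monotonicity of $g$) and that $u_{\mu_1}$ itself is a solution lying above it. Minimality of $\Lambda$ then yields $u_{\mu_1}-P_{\mu_2} \le \tilde u \le u_{\mu_1}$, so $\tilde u$ is nearly-maximal.

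Writing $u_{\max}-\tilde u = (u_{\max}-u_{\mu_1}) + (u_{\mu_1}-\tilde u)$ and using that $u_{\max}-u_{\mu_1}$ has weak trace $\mu_1$, it suffices to show that $h := u_{\mu_1}-\tilde u$ has weak trace $\mu_2$. I would analyse
$$
\psi \,:=\, P_{\mu_2}-h \,=\, P_{\mu_2}-u_{\mu_1}+\tilde u,
$$
which satisfies $0\le \psi\le P_{\mu_2}$ and $-\Delta\psi = g(u_{\mu_1})-g(\tilde u)\ge 0$, hence is a non-negative superharmonic function on $\mathbb{D}$. The approximants $\psi_r := P_{\mu_2}-u_{\mu_1}+\Lambda_r[u_{\mu_1}-P_{\mu_2}]$ are non-negative, superharmonic on $\mathbb{D}_r$, and vanish on $\partial\mathbb{D}_r$, so each is a pure Green potential on $\mathbb{D}_r$. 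As $r\to 1$ these increase to $\psi$, and a potential-theoretic limit argument identifies $\psi$ itself as a Green potential on $\mathbb{D}$, so that the harmonic part of its Riesz decomposition vanishes. Once that is established, $\psi$ has zero weak boundary trace, so the weak trace of $h=P_{\mu_2}-\psi$ equals $\mu_2$, and $\tilde u$ has deficiency precisely $\mu_1+\mu_2$.

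Part (ii) will follow from (i) by induction and a monotone limit. Each partial sum $\mu^{(n)}:=\sum_{i=1}^n\mu_i$ is a deficiency measure by (i), with associated solution $u^{(n)} := u_{\mu^{(n)}}$ decreasing in $n$ by Corollary \ref{monotonicity-principle2} and bounded below by $u_{\max}-P_\mu$. Setting $u := \lim u^{(n)}$, dominated convergence in the identity $\Delta u^{(n)} = g(u^{(n)})$ (with $g(u^{(n)})\le g(u_{\max})$ on compacts) shows that $u$ solves $\Delta u = g(u)$. Since $0\le u_{\max}-u^{(n)}\nearrow u_{\max}-u\le P_\mu$ with traces $\mu^{(n)}\nearrow \mu$, the weak trace of $u_{\max}-u$ is $\mu$, and $u$ is a nearly-maximal solution with deficiency $\mu$.

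The main obstacle will be the identification of $\psi$ as a pure Green potential in the limit. Although each $\psi_r$ is explicitly a Green potential on $\mathbb{D}_r$, as $r\to 1$ the kernels $G_{\mathbb{D}_r}$ increase to $G_{\mathbb{D}}$ while the densities $f_r := g(u_{\mu_1})-g(\tilde u_r)$ decrease to $f := g(u_{\mu_1})-g(\tilde u)$, so the integrand $G_{\mathbb{D}_r}(z,w)\,f_r(w)$ is not monotone. A careful convergence argument, ideally leveraging the uniform bound $\psi_r\le P_{\mu_2}$, will be needed to pass $\psi_r = G_{\mathbb{D}_r}*(f_r\,dA)$ to its limit $\psi = G_{\mathbb{D}}*(f\,dA)$ and thereby exclude any harmonic contribution.
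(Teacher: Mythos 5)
Your reduction for part (i) is correctly set up: the candidate $\tilde u=\Lambda[u_{\mu_1}-P_{\mu_2}]$ is (by Lemma \ref{one-at-a-time}) the same solution as $\Lambda[u_{\max}-P_{\mu_1+\mu_2}]$, and the problem indeed reduces to showing that $\psi=P_{\mu_2}-u_{\mu_1}+\tilde u$ has zero boundary trace. But the limit step you flag as the ``main obstacle'' is a genuine, and in my view unrepairable, gap in the form you propose. An increasing sequence of pure Green potentials on $\mathbb{D}_r$ need not converge to a pure Green potential on $\mathbb{D}$; the standard example is a sequence of Green potentials whose masses concentrate at the boundary and converge to a Poisson kernel. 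The uniform bound $\psi_r\le P_{\mu_2}$ does not rule this out: it only shows that the greatest harmonic minorant $H$ of $\psi$ satisfies $0\le H\le P_{\mu_2}$, which is exactly the trivial bound $\nu\le\mu_1+\mu_2$ you already have. You would also like to dominate $(G_{\mathbb{D}}-G_{\mathbb{D}_r})\ast f_r$ by $(G_{\mathbb{D}}-G_{\mathbb{D}_r})\ast g(u_{\mu_1})$ and invoke dominated convergence, but $G_{\mathbb{D}}\ast g(u_{\mu_1})$ is not finite: a nearly-maximal $u_{\mu_1}$ has no harmonic majorant (it is comparable to $u_{\max}\sim C_\alpha(1-|z|)^{\alpha-1}$), so $g(u_{\mu_1})$ is not integrable against the Green function.

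More fundamentally, your argument for (i) never uses the convexity of $g$; it only uses monotonicity. The paper's proof relies on convexity in an essential way: it considers $\phi=(A+D-B-C)^+$ with $A=u_{\max}$, $B=u_{\mu_1}$, $C=u_{\mu_2}$, $D=\Lambda[u_{\max}-P_{\mu_1+\mu_2}]$, and uses the inequality (\ref{eq:g-inequality}) for increasing convex $g$ (that $x_1\ge x_2,x_3\ge x_4$ and $x_1+x_4\ge x_2+x_3$ imply $g(x_1)+g(x_4)\ge g(x_2)+g(x_3)$) to show $\phi$ is subharmonic, and then a Kato-type approximation $\phi_r$ on $\mathbb{D}_r$ to conclude $\phi\equiv 0$. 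Taking traces of $A+D\le B+C$ gives $\mu_1+\mu_2\le\nu$. Any successful proof of (i) ought to use convexity somewhere; a proof that works for all increasing Keller--Osserman nonlinearities should raise suspicion.

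For part (ii), your monotone-limit argument via the partial sums $u^{(n)}=u_{\mu^{(n)}}$ is correct, but it is more involved than the paper's: the paper simply notes that $\Lambda[u_{\max}-P_\mu]\le\Lambda[u_{\max}-P_{\tilde\mu_j}]=u_{\tilde\mu_j}$ by monotonicity of $\Lambda$, so the trace $\nu$ of $\Lambda[u_{\max}-P_\mu]$ dominates each $\tilde\mu_j$ and hence $\mu$.
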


In the proof below, we will use the following elementary observation: if $g$ is a convex function and $x_1 < x_2 < x_3 < x_4$ are four real numbers satisfying $x_1 + x_4 = x_2 + x_3$, then
\begin{equation}
\label{eq:g-inequality}
g(x_2) + g(x_3) \le g(x_1) + g(x_4).
\end{equation}
Moreover, if $g$ is an {\em increasing} convex function, then (\ref{eq:g-inequality}) holds under the weaker assumption $x_1 + x_4 \ge x_2 + x_3$. This is a one-dimensional analogue of the fact that the composition $\phi \circ u$ of an increasing convex function $\phi$ and a subharmonic function $u$ is subharmonic.

\begin{proof}[Proof of Lemma \ref{product-law}]
(i) Suppose $\mu = \mu_1 + \mu_2$ is a measure on the unit circle. Set 
$$
u_\nu = \Lambda[u_{\max} - P_\mu].
$$
In view of the discussion preceding Lemma \ref{fundamental-lemma}, to prove (i), it is enough to show that
\begin{equation}
\label{eq:product-lemma}
 \mu_1 + \mu_2 \le \nu.
\end{equation}
To verify (\ref{eq:product-lemma}), we check that
$$
\Lambda[u_{\max} - P_{\mu_1}] + \Lambda[u_{\max} - P_{\mu_2}] \ge \Lambda[u_{\max}] + \Lambda[u_{\max} - P_{\mu}],
$$
which we abbreviate to  $B + C \ge A + D$. Clearly, $A \ge B \ge D$ and $A \ge C \ge D$.
Consider the function
$$\phi = (A + D - B - C)^+.$$
Since $g$ is an increasing convex function, at a point $z \in \mathbb{D}$ where $A + D > B + C$, we have
$$\Delta \phi(z) = g(A(z)) + g(D(z)) - g(B(z)) - g(C(z)) \ge 0.$$
In view of Kato's inequality, $\phi$ is subharmonic and non-negative on the unit disk. If we knew that $\phi$ had zero trace, then we could immediately say that $\phi$ is identically 0.

Due to difficulties examining the trace of $\phi$ on $\partial \mathbb{D}$ directly, we use an approximation argument. For each $0 < r < 1$, we consider the function 
$$
\phi_r \, = \, \Bigl ( \Lambda_r[u_{\max} - P_{\mu_1}] + \Lambda_r[u_{\max} - P_{\mu_2}] - \Lambda_r[u_{\max}] - \Lambda_r[u_{\max} - P_{\mu}] \Bigr)^+
,
$$
defined on $\mathbb{D}_r$. The above argument shows that $\phi_r$ is a non-negative subharmonic function on $\mathbb{D}_r$.
As $\phi_r$ has zero boundary values on $\partial \mathbb{D}_r$,  it is identically 0. Taking $r \to 1$, we see that $\phi$ is identically 0 as desired.

(ii) Set  $\tilde \mu_j =  \mu_1 + \mu_2 + \dots + \mu_j$. By part (i), we have
$$
\Lambda[u_{\max} - P_{\mu}] \le \Lambda[u_{\max} - P_{\tilde \mu_j}]  = u_{\tilde \mu_j}.
$$
The above equation shows that if
$$
 u_\nu = \Lambda[u_{\max} - P_{\mu}],
$$
then $\nu \ge \tilde \mu_j$ for any $j$, which implies that $\nu \ge \mu$. As the reverse inequality is always true, $\nu = \mu$ as desired.
\end{proof}

\begin{lemma}
If $\mu \ge 0$ is a measure on the unit circle and $u_\nu = \Lambda[u_{\max} - P_{\mu}]$, then the difference $\mu - \nu$ is invisible.
\end{lemma}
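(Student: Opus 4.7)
The plan is to argue by contradiction. Suppose that $\mu - \nu$ fails to be invisible; then there exists a measure $0 < \sigma \le \mu - \nu$ that is itself a deficiency measure. Since we already know $\nu$ is a deficiency measure (the first assertion of Theorem~\ref{nu-mu2}, to be proven in the preceding lemmas), Lemma~\ref{product-law}(i) implies that $\nu + \sigma$ is also a deficiency measure. By Lemma~\ref{fundamental-lemma}, the associated nearly-maximal solution is
$$
u_{\nu + \sigma} \, = \, \Lambda[u_{\max} - P_{\nu + \sigma}].
$$

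The next step is to compare this with $u_\nu = \Lambda[u_{\max} - P_\mu]$. Because $\nu + \sigma \le \mu$, we have $P_{\nu + \sigma} \le P_\mu$, so the two subsolutions satisfy $u_{\max} - P_{\nu+\sigma} \ge u_{\max} - P_\mu$ pointwise. I would then invoke the monotonicity of the operator $\Lambda$: for each $0 < r < 1$, the comparison principle (Theorem~\ref{l1theorem}) applied on $\mathbb{D}_r$ gives $\Lambda_r[v_1] \ge \Lambda_r[v_2]$ whenever $v_1 \ge v_2$, and letting $r \to 1$ transfers this to $\Lambda$ itself. Applied to our two subsolutions, this yields
$$
u_{\nu + \sigma} \, \ge \, u_\nu \quad \text{on } \mathbb{D}.
$$

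On the other hand, since $\sigma > 0$ we have a strict inequality of measures $\nu < \nu + \sigma$, and the monotonicity principle (Corollary~\ref{monotonicity-principle2}) gives the strict pointwise inequality $u_\nu > u_{\nu + \sigma}$ on $\mathbb{D}$. This directly contradicts the preceding display, so no such $\sigma$ can exist and $\mu - \nu$ is invisible. I do not anticipate a genuine obstacle here: the argument is essentially a diagram chase through Lemmas~\ref{fundamental-lemma}, \ref{division-law}, \ref{product-law} and Corollary~\ref{monotonicity-principle2}, with the only minor verification being the monotonicity of $\Lambda$, which is immediate from the comparison principle on each $\mathbb{D}_r$ followed by a limit.
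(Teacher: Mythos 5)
Your proof is correct and takes essentially the same route as the paper: you invoke Lemma~\ref{product-law} to form the deficiency measure $\nu+\sigma$, then contradict the monotonicity principle (Corollary~\ref{monotonicity-principle2}); the only cosmetic difference is that you derive $u_{\nu+\sigma}\ge u_\nu$ from the monotonicity of $\Lambda$, while the paper derives the same inequality directly from the characterization of $\Lambda[u_{\max}-P_\mu]$ as the \emph{least} solution lying above $u_{\max}-P_\mu$. These are equivalent facts about the minimal dominating solution, so the arguments coincide.
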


\begin{proof}
We need to show that any measure $0 < \omega \le \mu - \nu$ does not arise as a deficiency measure of some nearly-maximal solution.
The existence of $u_\omega$ would imply the existence of $u_{\nu + \omega}$ by Lemma \ref{product-law}, which would in turn lead to the estimate
$$
u_{\max} - P_\mu \, \le u_{\max} - P_{\nu + \omega} \, \le  \, u_{\nu+\omega} \, \le \, u_\nu,
$$
by the monotonicity principle and the fundamental lemma (Lemmas \ref{monotonicity-principle2} and \ref{fundamental-lemma} respectively). This contradicts the definition of $u_\nu$ as the {\em least} solution that lies above $u_{\max} - P_{\mu}$.
\end{proof}

\subsection{A lemma on iterated majorants}

For future reference, we record the following lemma:

\begin{lemma}
\label{one-at-a-time}
 {\em (i)} For two positive measures $\mu_1$ and $\mu_2$ on the unit circle,
$$
\Lambda \Bigl [  \Lambda \bigl [u_{\max} - P_{\mu_2} \bigr ] - P_{\mu_1} \Bigr] = \Lambda \bigl [ u_{\max} - P_{\mu_1+\mu_2} \bigr ].
$$

{\em (ii)} More generally,
$$
\Lambda \Bigl [ \dots \Lambda \bigl [ \Lambda[u_{\max} - P_{\mu_j}] - P_{\mu_{j-1}} \bigr ]  \dots - P_{\mu_1} \Bigr  ] = \Lambda \bigl [ u_{\max} - P_{\mu_1+\mu_2+\dots+\mu_j} \bigr ].
$$

{\em (iii)} If $\mu = \sum_{j=1}^\infty \mu_j$ is a finite measure, then
$$
\lim_{j \to \infty}\Lambda \Bigl [ \dots \Lambda \bigl [ \Lambda[u_{\max} - P_{\mu_j}] - P_{\mu_{j-1}} \bigr ] \dots - P_{\mu_1} \Bigr  ]
= \Lambda \bigl [ u_{\max} - P_{\mu} \bigr ],
$$
pointwise on the unit disk.
\end{lemma}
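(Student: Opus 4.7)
My plan is to prove (i) directly from the characterization of $\Lambda$ as a pointwise-minimal solution together with the principle of sub- and supersolutions; parts (ii) and (iii) then follow by induction and a limit argument respectively. Write $v := \Lambda[u_{\max} - P_{\mu_2}]$. For the easy direction, $v \ge u_{\max} - P_{\mu_2}$ gives $v - P_{\mu_1} \ge u_{\max} - P_{\mu_1 + \mu_2}$, and monotonicity of $\Lambda$ (immediate from minimality) yields $\Lambda[v - P_{\mu_1}] \ge \Lambda[u_{\max} - P_{\mu_1 + \mu_2}]$. For the reverse inequality, set $A := \Lambda[u_{\max} - P_{\mu_1 + \mu_2}] + P_{\mu_1}$. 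Because $g$ is increasing, $\Delta A = g(\Lambda[u_{\max} - P_{\mu_1 + \mu_2}]) \le g(A)$, so $A$ is a supersolution, and clearly $A \ge u_{\max} - P_{\mu_2}$. The principle of sub- and supersolutions then yields a genuine solution $U$ with $u_{\max} - P_{\mu_2} \le U \le A$; minimality of $v$ gives $v \le U \le A$, i.e., $v - P_{\mu_1} \le \Lambda[u_{\max} - P_{\mu_1 + \mu_2}]$, and minimality of $\Lambda[v - P_{\mu_1}]$ closes the argument.

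Part (ii) is then a straightforward induction on $j$, each step peeling off the outermost $\Lambda[\,\cdot\, - P_{\mu_1}]$ and invoking (i) with $\mu_2$ replaced by $\mu_2 + \cdots + \mu_j$. For (iii), put $\tilde\mu_j := \mu_1 + \cdots + \mu_j$, $R_j := \mu - \tilde\mu_j$, $L_j := \Lambda[u_{\max} - P_{\tilde\mu_j}]$ (equal to the iterated expression by (ii)), and $W := \Lambda[u_{\max} - P_\mu]$. Monotonicity gives $L_j \ge W$ with $L_j$ decreasing in $j$. For the matching upper bound I would reuse the supersolution trick: $A_j := W + P_{R_j}$ is a supersolution that dominates $u_{\max} - P_{\tilde\mu_j}$, so the same argument as in (i) yields $L_j \le A_j = W + P_{R_j}$. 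Since $\mu$ is finite, $R_j(\partial \mathbb{D}) \to 0$, hence $P_{R_j} \to 0$ pointwise on $\mathbb{D}$, and squeezing gives $L_j \to W$.

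The main obstacle is the upper bound in (i). A direct comparison of $v$ with the supersolution $A$ via Kato's inequality applied to $(v - A)^+$ is awkward because $u_{\max}$ has no finite boundary trace, so one cannot read off the needed boundary sign for the candidate subharmonic function. Inserting a true intermediate solution via the sub/supersolution principle sidesteps this entirely and lets the minimality characterization of $\Lambda$ do the work.
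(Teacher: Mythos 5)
Your proof is correct but takes a genuinely different route for the harder inequality in (i). The paper works on the truncated disks $\mathbb{D}_r$: it defines $u_r = \bigl(\Lambda_r[u_{\max}-P_{\mu_2}]-P_{\mu_1}\bigr) - \Lambda_r[u_{\max}-P_{\mu_1+\mu_2}]$, observes $u_r$ is subharmonic on $\mathbb{D}_r$ and vanishes on $\partial\mathbb{D}_r$ (since $\Lambda_r$ has prescribed boundary data there), concludes $u_r \le 0$, and lets $r\to 1$. You instead build the supersolution $A = \Lambda[u_{\max}-P_{\mu_1+\mu_2}]+P_{\mu_1}$, invoke the principle of sub- and supersolutions to insert a genuine solution $U$ between $u_{\max}-P_{\mu_2}$ and $A$, and close with the minimality characterization of $\Lambda$; this is valid. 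Your stated motivation — that a direct Kato comparison of $v$ with $A$ on the whole disk is awkward because one cannot control the boundary trace of $(v-A)^+$ — is a real obstruction, but the paper never faces it: localizing to $\mathbb{D}_r$ before comparing makes the boundary data exact and finite, so the trace issue disappears. Your route trades the localization argument for the sub/supersolution theorem, which is slightly heavier machinery but keeps the comparison at the level of $\Lambda$ rather than $\Lambda_r$. Parts (ii) and (iii) are essentially the paper's argument; in (iii) your supersolution derivation of $L_j \le W + P_{R_j}$ is just a re-proof of what follows immediately from applying (i) with the pair $(\mu-\tilde\mu_j, \tilde\mu_j)$, as the paper does, but the conclusion is the same.
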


\begin{proof}
(i) The $\ge$ direction follows from the monotonicity of $\Lambda$. For the $\le$ direction, it suffices to show that
$$
 \Lambda \bigl [u_{\max} -P_{\mu_2} \bigr ] - P_{\mu_1} \le \Lambda \bigl [  u_{\max} - P_{\mu_1+\mu_2}  \bigr ]
$$
or
$$
 \Lambda_r \bigl [ u_{\max} - P_{\mu_2}\bigr ] - P_{\mu_1}  \le \Lambda_r \bigl [  u_{\max} - P_{\mu_1+\mu_2} \bigr ]
$$
for any $0 < r < 1$. To this end, we form the function
$$
u_r =  \Bigl ( \Lambda_r \bigl [ u_{\max} - P_{\mu_2} \bigr ] - P_{\mu_1} \Bigr ) - \Lambda_r \bigl [  u_{\max} - P_{\mu_1+\mu_2} \bigr ],
$$
defined on $\mathbb{D}_r = \{ z : |z| < r \}$. Since $u_r$ is subharmonic and vanishes on $\partial \mathbb{D}_r$, it must be identically 0. This proves the $\le$ direction.

(ii) follows after applying (i) $j-1$ times.

(iii) Let $\tilde \mu_j = \mu_1 + \mu_2 + \dots + \mu_j$. By part (i), we have
 $$
 \Lambda \bigl [u_{\max} - P_{\tilde \mu_j} \bigr ] - P_{\mu-\tilde \mu_j} \, \le \,
  \Lambda \bigl [ u_{\max} - P_\mu \bigr ] \, \le \,  \Lambda \bigl [ u_{\max} - P_{\tilde \mu_j} \bigr ].
 $$
 Since $P_{\mu-\tilde \mu_j} \to 0$ pointwise in the unit disk, the minimal dominating solutions  $\Lambda \bigl [u_{\max} - P_{\tilde \mu_j} \bigr ]$
decrease to $\Lambda \bigl [u_{\max} - P_{\mu}\bigr ]$.
\end{proof}

\section{Nearly maximal solutions}
\label{sec:nearly-maximal}

In this section, we prove Theorem \ref{nearly-maximal} which partially characterizes the nearly-maximal solutions of
\begin{equation}
\label{eq:p-PDE}
\Delta u = u^p \cdot \chi_{u > 0}, \qquad \text{on }\mathbb{D},
\end{equation}
with $p > 1$. From Section \ref{sec:basic-PDE}, we know that (\ref{eq:p-PDE}) has a radially invariant solution $u_{\max}$ which dominates all the other solutions pointwise. By solving an ODE, one can write down an explicit formula for $u_{\max}$. Here, we will only need the asymptotic formula
$$
u_{\max}(z) \sim C_\alpha (1-|z|)^{\alpha-1}, \qquad |z| \to 1,
$$
where $\alpha = \frac{p-3}{p-1}$. We will be especially interested in the case when $p > 3$, in which case $\alpha \in (0,1)$.

The proof of Theorem \ref{nearly-maximal} consists of two parts:
\begin{enumerate}
\item First, we show that if $\mu$ does not charge $\alpha$-Beurling-Carleson sets, then it is not the deficiency measure of any nearly-maximal solution. 
As the proof is similar to the one in  \cite{ivr19} for $\Delta u = e^{2u}$, we only give a sketch of the argument in Section \ref{sec:invisible}.

\item Secondly, we show that if $\mu$ is concentrated on an $\beta$-Beurling-Carleson set, for some $\beta < \alpha$, then there is a nearly-maximal solution $u_\mu$ with deficiency measure $\mu$. The argument in \cite{ivr19} relied on the Liouville correspondence between solutions of $\Delta u = e^{2u}$ and holomorphic self-mappings of the disk, which is unavailable in the present setting. We present a new approach to existence which involves  special  Privalov stars with round corners. The special Privalov stars will be constructed in Section \ref{sec:special-star} and the existence will be explained in Section \ref{sec:existence}.
\end{enumerate}

\subsection{Restoring property}

We focus on the case when $p > 3$.
The following lemmas will be used in conjunction with Roberts decompositions to show that certain measures on the unit circle are invisible:
 
\begin{lemma}
\label{fail-lemma}
Let $n_i = 2^i$. For any $0 < a < 1$, there exists $a < b < 1$ such that
\begin{equation}
\label{eq:fail-lemma}
\Lambda_{1-1/n_{i+1}}[a \cdot u_{\max}] > b \cdot u_{\max}, \qquad \text{on }\{z : |z| = 1 - 1/n_i\}.
\end{equation}
 \end{lemma}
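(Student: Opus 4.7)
The plan is to exploit rotational symmetry to reduce to a radial ODE, rescale using the scale-invariance of $\Delta u = u^p$, and read off the gain $b > a$ from an explicit limiting solution.

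By the uniqueness part of Theorem~\ref{l1theorem}, $u_i := \Lambda_{1-1/n_{i+1}}[a u_{\max}]$ is radial; write $u_i(z) = f_i(1-|z|)$, set $\rho_i := 1/n_{i+1}$ and $\sigma := 2/(p-1)$. The radial Laplacian gives
$$ f_i''(\rho) - \frac{f_i'(\rho)}{1-\rho} = f_i(\rho)^p, \quad \rho \in [\rho_i, 1], \qquad f_i(\rho_i) = a u_{\max}(1-\rho_i), \qquad f_i'(1) = 0. $$
The comparison principle yields the sandwich $a u_{\max} \le u_i \le u_{\max}$ on $\mathbb{D}_{1-\rho_i}$ (the left inequality is built into the definition of $\Lambda$, while the right follows by comparing boundary values of $u_i$ and $u_{\max}|_{\mathbb{D}_{1-\rho_i}}$). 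Introduce the scale-invariant rescaling $g_i(s) := \rho_i^{\sigma} f_i(\rho_i s)$ on $[1, 1/\rho_i]$, which satisfies
$$ g_i''(s) - \frac{\rho_i}{1-\rho_i s}\, g_i'(s) = g_i(s)^p, $$
with $g_i(1) \to a C_\alpha$ by the asymptotic $u_{\max}(r) \sim C_\alpha (1-r)^{-\sigma}$ and the uniform bound $g_i(s) \lesssim s^{-\sigma}$ inherited from $u_i \le u_{\max}$.

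As $\rho_i \to 0$, the drift coefficient $\rho_i/(1-\rho_i s)$ vanishes uniformly on compact subsets of $[1,\infty)$, so by standard ODE compactness and continuous dependence, any subsequential limit $g$ of $\{g_i\}$ solves the autonomous equation $g'' = g^p$ on $[1,\infty)$ with $g(1) = a C_\alpha$ and $g(s) \lesssim s^{-\sigma}$. The conserved energy $E := \tfrac{1}{2}(g')^2 - \tfrac{1}{p+1} g^{p+1}$ must vanish for any positive solution decaying at infinity, forcing $g$ to be a translate of the singular solution $C_\alpha s^{-\sigma}$ of this ODE. The condition $g(1) = a C_\alpha$ then pins down the unique limit $g_*(s) = C_\alpha(s + a^{-1/\sigma} - 1)^{-\sigma}$, so the full sequence converges and $g_i(2) \to g_*(2) = C_\alpha(1+a^{-1/\sigma})^{-\sigma}$. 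Rescaling back and using $u_{\max}(r_i) \sim 2^{-\sigma} C_\alpha \rho_i^{-\sigma}$ with $r_i = 1-1/n_i = 1 - 2\rho_i$ yields
$$ \lim_{i\to\infty} \frac{u_i(r_i)}{u_{\max}(r_i)} = \biggl(\frac{2}{1+a^{-1/\sigma}}\biggr)^{\sigma}, $$
which lies strictly in $(a,1)$: the inequality $>a$ reduces to $a^{1/\sigma} + 1 < 2$, i.e., $a<1$, while $<1$ reduces to $a^{-1/\sigma}>1$, again equivalent to $a<1$.

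Fix any $b$ strictly between $a$ and this limit; then \eqref{eq:fail-lemma} holds for all sufficiently large $i$. For the finitely many remaining indices, the strong maximum principle applied to $w_i := u_i - a u_{\max}$ (whose Laplacian is strictly negative at any interior point where $w_i$ vanishes, since $a^p < a$) gives strict positivity of $u_i(r_i)/u_{\max}(r_i) - a$, and shrinking $b$ absorbs these finitely many cases. The main technical step will be the uniform convergence $g_i \to g_*$ on $[1,2]$, which ultimately rests on the sandwich bounds $a u_{\max} \le u_i \le u_{\max}$ together with the scale-invariance built into the rescaling; matching the singular asymptotics of $u_{\max}$ with the singular solution of the autonomous limit equation is the only nontrivial ingredient.
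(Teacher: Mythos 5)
Your proposal is correct, and it is a rigorous fleshing-out of the paper's sketch. The paper performs its computation ``on the upper half-plane,'' where $u_{\max}$ is exactly $C_\alpha y^{\alpha-1}$ and $\Lambda_{y_0}$ of a constant boundary datum is exactly a vertical translate of the maximal solution; since $\Delta u = u^p\chi_{u>0}$ is not conformally invariant, this is implicitly a blow-up heuristic for the disk, and the paper (which flags this whole section as a sketch) does not spell out the transfer. You make that blow-up rigorous: reduce to the radial ODE, rescale by the natural homogeneity $g_i(s) = \rho_i^\sigma f_i(\rho_i s)$ with $\sigma = 2/(p-1) = 1-\alpha$, and observe that the drift $\rho_i/(1-\rho_i s)$ vanishes so the limit solves the autonomous equation $g''=g^p$. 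The conserved energy $\frac12(g')^2 - \frac1{p+1}g^{p+1}$ together with $g\lesssim s^{-\sigma}$ forces $g$ to be a translate of the singular profile $C_\alpha s^{-\sigma}$, and evaluating at $s=2$ gives precisely the paper's $b=(1+a^{1/(\alpha-1)})^{\alpha-1}/2^{\alpha-1}=\bigl(2/(1+a^{-1/\sigma})\bigr)^{\sigma}$, with the same elementary check that it lies in $(a,1)$. Your separate treatment of the finitely many small indices via the strong maximum principle (using $a^p<a$ so that $a u_{\max}$ is a \emph{strict} subsolution) is exactly the step the half-plane picture does not supply, and it is needed for a single $b$ uniform over all $i$. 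Two points still owed, both routine: the $C^1_{\mathrm{loc}}$ compactness of $\{g_i\}$ (the sandwich $a u_{\max}\le u_i\le u_{\max}$ bounds $g_i$, and a mean-value plus Gronwall argument using the rescaled ODE then bounds $g_i'$ on compacta); and the fact that the constant $C_\alpha$ in the boundary asymptotics of $u_{\max}$ coincides with the singular-solution constant $(\sigma(\sigma+1))^{1/(p-1)}$ of the autonomous ODE, which is what lets the energy argument pin down the unique limit profile.
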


\begin{proof}
We prefer to work on the upper half-plane $\mathbb{H}$ since the expression for the maximal solution is simpler there: $u_{\max}(z) = C_\alpha y^{\alpha-1}$ where $y = \im z$. We need to show that
$$
\Lambda_{y_0}[a \cdot u_{\max}] > b \cdot u_{\max}, \qquad \text{on }\{\im z = 2y_0\}.
$$

 When extending constant boundary values from a horizontal line, we get the maximal solution shifted vertically by an appropriate amount:
$$u = \Lambda_{y_0}[a \cdot u_{\max}]  = C_\alpha (y+c)^{\alpha-1},$$
where $c$ is determined by the equation
$$
a \cdot C_\alpha y_0^{\alpha-1} = C_\alpha (y_0+c)^{\alpha-1} \quad  \implies \quad c =  a^{\frac{1}{\alpha-1}} \cdot y_0 - y_0.
$$
In particular,
$$
u(2y_0) = C_\alpha (1 + a^{\frac{1}{\alpha-1}})^{\alpha -1} \cdot y_0^{\alpha-1}.
$$
This suggests that we should take 
$$
b \, = \, \frac{u(2y_0)}{u_{\max}(2y_0)} \, = \, \frac{ (1 + a^{\frac{1}{\alpha-1}})^{\alpha -1} }{2^{\alpha-1}} \, > \, a.
$$
The proof is complete.
\end{proof}

A similar argument shows:

\begin{lemma}
\label{fail-lemma2}
For any $0 < a, \varepsilon, \rho < 1$, there exists an $0 < r < 1$ such that
\begin{equation}
\label{eq:fail-lemma}
\Lambda_{r}[a \cdot u_{\max}] > (1-\varepsilon) \cdot u_{\max}, \qquad \text{on }\mathbb{D}_\rho.
\end{equation}
 \end{lemma}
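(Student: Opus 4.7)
The plan is to mimic the argument of Lemma \ref{fail-lemma}, using the scaling symmetry of the PDE in place of the translation symmetry used there. The crucial identity is $-2/(p-1) = \alpha - 1$, which implies that for every $R > 0$ the scaling $v_R(z) := R^{\alpha-1} u_{\max}(z/R)$ produces the radially-invariant maximal solution of (\ref{eq:p-PDE}) on the disk $\mathbb{D}_R$, with universal blowup asymptotics $v_R(z) \sim C_\alpha(R - |z|)^{\alpha-1}$ near $\partial \mathbb{D}_R$. Since $\Lambda_r[a \cdot u_{\max}]$ is the unique solution on $\mathbb{D}_r$ with the constant boundary value $a \cdot u_{\max}(r)$ (Theorem \ref{l1theorem}), and since both the equation and this boundary datum are rotationally invariant, $\Lambda_r[a \cdot u_{\max}]$ is itself radially symmetric. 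My aim is to identify it with $v_R|_{\mathbb{D}_r}$ for a suitable $R$.

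By the comparison principle applied on $\mathbb{D}_{R_1}$ for $R_1 < R_2$, $v_R(r)$ is strictly decreasing in $R$ on $(r,\infty)$; it tends to $+\infty$ as $R \to r^+$ and to $0$ as $R \to \infty$. So there is a unique $R = R(a,r) > 1$ with $v_R(r) = a \cdot u_{\max}(r)$, and by uniqueness $\Lambda_r[a \cdot u_{\max}] = v_R|_{\mathbb{D}_r}$. Plugging the asymptotic $u_{\max}(\rho) \sim C_\alpha (1-\rho)^{\alpha-1}$ into the defining equation for $R$, the $R$-dependent prefactors cancel exactly (this is again the identity $-2/(p-1) = \alpha-1$), yielding $C_\alpha (R-r)^{\alpha-1} \sim a \cdot C_\alpha (1-r)^{\alpha-1}$, so
$$R - 1 \;\sim\; \bigl(a^{1/(\alpha-1)} - 1\bigr)(1-r).$$
Since $\alpha - 1 < 0$ and $a < 1$, the factor $a^{1/(\alpha-1)} - 1$ is positive, confirming $R > 1$ and giving $R \to 1^+$ as $r \to 1^-$.

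Finally, the scaling formula makes it immediate that $v_R \to u_{\max}$ uniformly on the compact set $\overline{\mathbb{D}_\rho}$ as $R \to 1^+$. Hence, once $r$ is close enough to $1$ (which forces $R$ close enough to $1$),
$$\Lambda_r[a \cdot u_{\max}] \;=\; v_R \;>\; (1-\varepsilon) u_{\max} \qquad \text{on } \mathbb{D}_\rho,$$
as required. The only real bookkeeping task is upgrading the asymptotic $\sim$ to honest two-sided inequalities that hold uniformly for $r$ near $1$, but this is routine; the conceptual content of the proof lies entirely in the scaling identification $\Lambda_r[a \cdot u_{\max}] = v_R|_{\mathbb{D}_r}$, after which only elementary continuity is used.
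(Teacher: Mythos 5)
Your proposal is correct, and it takes a slightly different route from what the paper intends. The paper's proof of the companion Lemma~\ref{fail-lemma} transports the problem to the upper half-plane, where $u_{\max}$ is exactly $C_\alpha y^{\alpha-1}$ and extending a constant from a horizontal line yields a vertical translate of $u_{\max}$; the phrase ``a similar argument'' for Lemma~\ref{fail-lemma2} indicates the same half-plane/translation picture. You instead stay in the disk and exploit the exact scaling symmetry: the identity $\alpha - 3 = p(\alpha-1)$ (equivalently $\alpha-1 = -2/(p-1)$) makes $v_R(z) = R^{\alpha-1}u_{\max}(z/R)$ the genuine maximal solution on $\mathbb{D}_R$, and the rotational invariance of the boundary datum $a\cdot u_{\max}(r)$ plus uniqueness in Theorem~\ref{l1theorem} then identify $\Lambda_r[a\,u_{\max}]$ with $v_R|_{\mathbb{D}_r}$ for the unique $R>1$ solving $v_R(r)=a\,u_{\max}(r)$. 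This has the small advantage of being exact rather than an approximation to the half-plane model, with the asymptotic $u_{\max}(\rho)\sim C_\alpha(1-\rho)^{\alpha-1}$ entering only to show $R\to 1^+$ as $r\to 1^-$, after which uniform convergence $v_R\to u_{\max}$ on $\overline{\mathbb{D}_\rho}$ is immediate. Two small checks you implicitly use and should keep in mind when writing this up: $a\,u_{\max}$ is indeed a subsolution (since $a\, u_{\max}^p \ge a^p u_{\max}^p$ for $0<a<1<p$), so $\Lambda_r[a\,u_{\max}]$ is defined, and the monotonicity of $v_R(r)$ in $R$ used to locate $R$ comes straight from the comparison principle on $\mathbb{D}_{R_1}$. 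Both approaches ultimately rest on the same boundary asymptotics, so the proofs are morally equivalent, but yours is clean and self-contained in the disk.
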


\subsection{Invisible measures}
\label{sec:invisible}

 Suppose $\mu$ is a measure on the unit circle that does not charge $\alpha$-Beurling-Carleson sets.
In order to show that $\mu$ is invisible, it is enough to check that $\Lambda[u_{\max} - P_\mu] = u_{\max}$, where $\Lambda$ denotes the minimal dominating solution on the unit disk. 

According to Corollary \ref{roberts-series}, for any parameters $c, j_0$, we can express $\mu$ as an infinite series
$$
\mu = \mu_1 + \mu_2 + \dots,
$$
where $\mu_j$ satisfies the modulus of continuity estimate 
\begin{equation}
\label{eq:roberts-mod}
\mu_j(I) \le c|I|^{\alpha}, \qquad I \in \mathcal D_{j + j_0}.
\end{equation}
One may express the condition (\ref{eq:roberts-mod}) in terms of the Poisson extension $P_{\mu_j}$ to the unit disk:
$$
P_{\mu_j}(z) \, \le \, c_2(1-|z|)^{\alpha-1} \, \le \, c_3 \cdot u_{\max}(z), \qquad  |z| = 1 - 2^{-(j+j_0)}.
$$
We choose the parameter $c > 0$ in the Roberts decomposition sufficiently small so that the above equation holds with $c_3 = b-a$, where $0 < a < 1$ is arbitrary and $b = b(a)$ is given by Lemma \ref{fail-lemma}.

By Lemma \ref{one-at-a-time} and monotonicity properties of $\Lambda$, we have
\begin{align*}
 \Lambda \bigl [ u_{\max} - P_{\mu} \bigr ] & = \lim_{j \to \infty} \Lambda \bigl [ u_{\max} - P_{\mu_1+\mu_2+\dots+\mu_j} \bigr ] \\
& =  \lim_{j \to \infty} \Lambda \bigl [\dots  \Lambda[u_{\max} - P_{\mu_j}] \dots - P_{\mu_1} \bigr  ]\\
& \ge  \lim_{j \to \infty}   \Lambda_{1-1/n_1} \bigl [\dots  \Lambda_{1-1/n_j}[u_{\max} - P_{\mu_j}] \dots - P_{\mu_1} \bigr  ].
\end{align*}
Since each time we apply $\Lambda_{1-1/n_i}$, we shrink the domain of the definition, the above inequality is valid on $\mathbb{D}_{1-1/n_1}.$
Using the restoring property $j$ times, we get
 $$
 \Lambda \bigl [ u_{\max} - P_{\mu} \bigr ] \ge a \cdot u_{\max}, \qquad \text{on }\mathbb{D}_{1-1/n_1}.
 $$
 Applying the restoring property one more time shows that for any given $0 < \rho < 1$ and $\varepsilon > 0$, one could choose the offset $j_0 \ge 0$ sufficiently large to guarantee that
  $$
 \Lambda \bigl [ u_{\max} - P_{\mu} \bigr ] \ge (1 - \varepsilon) u_{\max}, \qquad \text{on }\mathbb{D}_{\rho}.
 $$
 In other words, $ \Lambda \bigl [ u_{\max} - P_{\mu} \bigr ] = u_{\max}$ as desired.

\subsubsection{What happens when \texorpdfstring{$1 < p \le 3$}{1 < p \textle\ 3}?}

If $1 < p \le 3$, then by Harnack's inquality,
$$
P_{\mu}(z) \, \le \, 2(1-|z|)^{-1} \mu(\partial \mathbb{D}) \, \lesssim \, u_{\max}(z), \qquad |z| < 1,
$$ is true for any measure on the unit circle. By multiplying $\mu$ by a small constant $\varepsilon > 0$, one can arrange that
$P_{\varepsilon \mu} \le (1/2)  u_{\max}$ or $u_{\max} - P_{\varepsilon \mu} \ge (1/2)u_{\max}$. The argument above shows that 
$ \Lambda \bigl [ u_{\max} - P_{\mu} \bigr ] = u_{\max}$, which means that the measure $\varepsilon \mu$ is invisible. In turn, this implies that $\mu$ itself is invisible.

\subsection{Special Privalov Stars}
\label{sec:special-star}

 Suppose $E \subset \partial \mathbb{D}$ is a $\beta$-Beurling-Carleson set with $\beta < \alpha$ and $\mu$ is a measure supported on $E$. Given $\varepsilon > 0$, we will construct a special sawtooth domain $\tilde{K}_E = \tilde{K}_{E}(\varepsilon, \mu) \subset \mathbb{D}$ containing the origin which satisfies the following properties:
\begin{enumerate}
\item[(1)] Let $\omega_z$ denote the harmonic measure on $\partial \tilde{K}_E$ as viewed from $z \in \tilde{K}_E$. We require that
$$
\int_{\partial \tilde{K}_E} u_{\max}(z) d\omega_0(z) \, \asymp \, \int_{\partial \tilde{K}_E} (1-|z|)^{\alpha-1} d\omega_0(z) \, < \, \infty.
$$
\item[(2a)] Secondly, we want the Riemann map $\varphi: (\mathbb{D},0) \to (\partial \tilde{K}_E,0)$ to have a finite angular derivative at $\varphi^{-1}(\zeta)$ for $\mu$ a.e.~$\zeta \in E = \partial \tilde{K}_E \cap \partial \mathbb{D}$.

\item[(2b)]  In view of the Schwarz lemma, for any $\zeta \in E$, the angular derivative $1 < |\varphi'(\varphi^{-1}(\zeta))| < \infty$, or alternatively, 
$0 < |(\varphi^{-1})'(\zeta)| < 1$.
We will construct  $\partial \tilde{K}_E$ so that the set $E' \subset E$  where $1-\varepsilon < |(\varphi^{-1})'(\zeta)| < 1$ has measure $\mu(E') \ge (1-\varepsilon) \mu(E)$.
\end{enumerate} 

\noindent 
Fix a constant $1 < \gamma < \frac{1}{1-\alpha}$.
We fix a $C^1$ function $\phi: [0,1] \to [0,1]$ which satisfies

\smallskip

$0 < \phi(t) \le 1 - 2 |t-1/2|$ for $0 < t < 1$,

$\phi(0) = 0$, $\phi(t) \sim t^{\gamma}$ as $t \to 0$,

$\phi(1/2) = 1$,

$\phi(1) = 0$, $\phi(t) \sim (1-t)^{\gamma}$ as $t \to 1$,

\smallskip

\noindent 
and define the {\em tent} over  $[0,1]$ with height $h$ by
$$
T^h_{[0,1]} = \bigl  \{(x,y) \in \mathbb{R}^2 : 0 \le x \le 1, \, 0 \le y \le  h \cdot \phi(x) \bigr \}.
$$
Let $\{h(I)\} \subset (0,1]$ be a collection of heights. Over each complementary arc $I = (e^{i\theta_1}, e^{i\theta_2}) \subset \partial \mathbb{D} \setminus E$, we  build the tent 
$$
T_I \, = \, \biggl \{ r e^{i\theta} \, : \, \theta_1 \le \theta \le \theta_2, \ 1 - \psi \cdot h(I) \cdot \phi \biggl (\frac{\theta - \theta_1}{\theta_2 - \theta_1} \biggr ) \le r \le 1 \biggr \},
$$
where $0 < \psi \le 1$ is an auxiliary parameter to be chosen. The special Privalov star $\tilde{K}_E$ is then obtained by removing these tents from the unit disk.
To achieve the above objectives, we use the heights
\begin{equation}
\label{eq:choice-of-heights}
h(I) = \min \biggl (|I|, \, \frac{|I|^\alpha}{u(z_I)} \biggr ), \qquad u = P_\mu.
\end{equation}

\subsubsection{Condition (1)}

 For an arc $J \subset \partial \mathbb{D}$, we denote by $\tau(J)$   the part of $\partial \tilde{K}_E$ that is located above $J$ in $\partial \tilde{K}_E$, i.e.~$\tau(J) = \{z \in \partial \tilde{K}_E : z/|z| \in J \}$. 

\begin{lemma}
\label{harmonic-and-lebesgue-measures}
The harmonic measure on $\partial \tilde{K}_E$ as viewed from the origin is bounded above by a multiple of arclength.
\end{lemma}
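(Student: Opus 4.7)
My plan is to work with the Green function $G(\cdot, 0)$ of $\tilde{K}_E$ and use the identity $d\omega_0/|dz| = \frac{1}{2\pi}|\nabla G(z, 0)|$ at each smooth boundary point. The lemma thus reduces to a uniform gradient bound $|\nabla G(z, 0)| \le C$ a.e.\ on $\partial \tilde{K}_E$, which I will establish separately on $E$ and on each tent top $\tau(I)$.

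On $E \subset \partial \tilde{K}_E \cap \partial \mathbb{D}$, since $\tilde{K}_E \subset \mathbb{D}$, the comparison principle gives $G_{\tilde{K}_E} \le G_{\mathbb{D}} = -\log|\cdot|$ throughout $\tilde{K}_E$; both functions vanish on $E$, so $|\nabla G_{\tilde{K}_E}| \le |\nabla G_{\mathbb{D}}| = 1$ on $E$.

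On a tent top $\tau(I)$, I combine a global upper bound on $G$ with a local boundary gradient estimate. Letting $\varphi: (\mathbb{D}, 0) \to (\tilde{K}_E, 0)$ be the Riemann map, Schwarz's lemma applied to the self-map $\iota \circ \varphi: \mathbb{D} \to \mathbb{D}$ of the disk (with $\iota$ the inclusion) gives $|\varphi(z)| \le |z|$, or equivalently $|\varphi^{-1}(w)| \ge |w|$ for $w \in \tilde{K}_E$, yielding
$$
G(w, 0) = -\log|\varphi^{-1}(w)| \, \le \, -\log|w| \, \lesssim \, 1 - |w|.
$$
At the apex $z_I := (1 - \psi h(I)) e^{i\theta_I}$ of $T_I$ this gives $G(z_I, 0) \lesssim h(I)$. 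The choice $h(I) = \min(|I|, |I|^{\alpha}/u(z_I)) \le |I|$ forces $T_I$ to have uniformly bounded aspect ratio, so by the Harnack inequality the bound $G \lesssim h(I)$ propagates to a neighborhood $U_I$ of $\tau(I)$ in $\tilde{K}_E$ of diameter $\asymp h(I)$. A standard Poisson-kernel barrier argument on $U_I$, using that $\tau(I)$ is smooth at scale $h(I)$, then gives
$$
|\nabla G(z, 0)| \, \lesssim \, \frac{\|G\|_{L^\infty(U_I)}}{h(I)} \, \lesssim \, 1
$$
at every smooth interior point $z$ of $\tau(I)$.

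The main technical obstacle is near the cuspidal endpoints of $\tau(I)$, where the tent top meets $\partial \mathbb{D}$ tangentially and $\partial \tilde{K}_E$ may accumulate further tent corners of neighboring complementary arcs. I will handle this by localizing at the corner: the boundary is locally of class $C^{1, \gamma - 1}$ with $\gamma - 1 > 0$ (since $\phi \in C^1$ and $\phi(t) \sim t^{\gamma}$ near the endpoints), so a refined barrier---for example, via a conformal map straightening the corner, or by comparison with a wedge-shaped subdomain---yields a uniform gradient bound at the endpoints as well, completing the argument.
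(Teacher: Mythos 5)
The paper's proof is a short domain--monotonicity argument that never touches the Green's function: since $\tau(J) \subset B(J)$, where $B(J)$ is the disk of radius $3|J|$ centred at the midpoint of $J$, any path from $0$ to $\tau(J)$ must cross $\partial B(J) \cap \mathbb{D}$, so
$$
\omega_{\tilde{K}_E,0}(\tau(J)) \ \le \ \omega_{\mathbb{D}\setminus B(J),\,0}\bigl(\partial B(J) \cap \mathbb{D}\bigr) \ \lesssim \ |J|,
$$
and since radial projection is a $1$-Lipschitz bijection from $\partial\tilde{K}_E$ onto $\partial\mathbb{D}$, this is exactly the claimed bound. Your route through $d\omega_0/|dz| = \frac{1}{2\pi}|\nabla G(\cdot,0)|$ is structurally different and, as written, has genuine gaps.

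First, the claim that $h(I) \le |I|$ forces $T_I$ to have uniformly bounded aspect ratio is false: with $h(I) = \min\!\bigl(|I|,\,|I|^\alpha/u(z_I)\bigr)$, the tent is arbitrarily \emph{flat} whenever $u(z_I)$ is large, so a Harnack neighbourhood $U_I$ of diameter $\asymp h(I)$ around the apex does not reach most of $\tau(I)$, and the single estimate $\|G\|_{L^\infty(U_I)} \lesssim h(I)$ does not control the gradient along the whole tent top. One would instead have to run the Schwarz-lemma bound $G \lesssim 1-|w|$ locally on each Whitney piece of $\tau(I)$, at which point the "local aspect ratio'' of the tent, which behaves like $(h(I)/|I|)\,2^{-(\gamma-1)|n|}$ on the $n$-th piece, tends to $0$ and is not uniformly bounded from above \emph{or} below. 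Second, the interior-type gradient bound $|\nabla G| \lesssim \|G\|_{L^\infty}/h$ at a \emph{boundary} point is not free: it requires $C^{1,\alpha}$ control of the boundary at that scale with uniform constants, and as $\phi(t) \sim t^\gamma$ with $1 < \gamma < \frac{1}{1-\alpha}$ the curvature of $\tau(I)$ blows up near its endpoints, so those constants degenerate exactly where you most need them. Third, and most importantly, the cusps where adjacent tent tops meet points of $E$ tangentially are acknowledged as the ``main technical obstacle'' but left to an unspecified ``refined barrier''; this is the crux of the lemma and cannot be deferred, since the whole point of the construction is that $E$ carries the measure $\mu$ while boundary regularity fails there. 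Finally, even granting pointwise bounds at smooth boundary points, passing from $|\nabla G|\le C$ a.e.\ to the measure inequality requires absolute continuity $\omega_0 \ll$ arclength on all of $\partial\tilde{K}_E$, including $E$ itself; the paper's monotonicity estimate gives $\omega_0(E)=0$ directly by covering $E$ with arcs of small total length, whereas your argument has to address this separately. The comparison $G_{\tilde{K}_E}\le G_{\mathbb{D}}$ on $E$ is a clean observation, but since $E$ has zero arclength it contributes nothing to the conclusion and does not replace the needed vanishing of $\omega_0$ on $E$.
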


\begin{proof}
To prove the lemma, we show that $\omega_{\tilde {K}_E, 0}(\tau(J)) \lesssim |J|$ for any arc $J$ of the unit circle with $|J| \le 1/4$. Let $B = B(J)$ be a ball centered at the midpoint of $J$ of radius $3|J|$.
Since $\tau(J) \subset B(J)$, by the monotonicity properties of harmonic measure, we have
$$
\omega_{\tilde {K}_E, 0}(\tau(J)) \le \omega_{\mathbb{D} \setminus B, 0}(\partial B \cap \mathbb{D}).
$$
The latter quantity is easily seen to be $\lesssim |J|$.
\end{proof}

\begin{corollary}
For a complementary arc $I \subset \partial \mathbb{D} \setminus E$, we have
$$
\int_{\tau(I)} u_{\max}(z) d\omega_0(z)  \lesssim
 h(I)^{\alpha-1} \cdot |I|.
$$
\end{corollary}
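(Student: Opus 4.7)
The plan is to parameterize $\tau(I)$ by its angular variable and reduce the bound to an elementary one-dimensional integral, making essential use of the profile function $\phi$.

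First, I would parameterize the tent. Writing $I = (e^{i\theta_1}, e^{i\theta_2})$ and setting $t = (\theta - \theta_1)/|I|$, every point of $\tau(I)$ takes the form $z(\theta) = r(\theta) e^{i\theta}$ with $1 - r(\theta) = \psi \cdot h(I) \cdot \phi(t)$. Since the explicit asymptotic $u_{\max}(z) \sim C_\alpha (1-|z|)^{\alpha-1}$ together with $0 < \alpha < 1$ yields the global estimate $u_{\max}(z) \lesssim (1-|z|)^{\alpha-1}$ on the whole disk, this gives the pointwise bound
$$
u_{\max}(z(\theta)) \lesssim h(I)^{\alpha-1} \cdot \phi(t)^{\alpha-1}, \qquad \theta \in (\theta_1, \theta_2).
$$

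Next, I would invoke Lemma \ref{harmonic-and-lebesgue-measures}. Applying the inequality $\omega_0(\tau(J)) \lesssim |J|$ to arbitrarily small arcs $J \subset \partial \mathbb{D}$ and using the Lebesgue differentiation theorem, the radial push-forward of $\omega_0$ under $z \mapsto z/|z|$ is absolutely continuous with density bounded by a fixed constant. Combining with the pointwise estimate above gives
$$
\int_{\tau(I)} u_{\max}(z) \, d\omega_0(z) \, \lesssim \, h(I)^{\alpha-1} \int_I \phi\!\left(\frac{\theta - \theta_1}{|I|}\right)^{\alpha-1} d\theta \, = \, h(I)^{\alpha-1} \cdot |I| \cdot \int_0^1 \phi(t)^{\alpha-1} dt.
$$

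Finally, I would verify that $\int_0^1 \phi(t)^{\alpha-1} dt < \infty$. Near $t = 0$ the integrand behaves like $t^{\gamma(\alpha-1)}$ and near $t = 1$ like $(1-t)^{\gamma(\alpha-1)}$. The constraint $1 < \gamma < 1/(1-\alpha)$ built into the construction of $\tilde{K}_E$ is exactly $\gamma(\alpha - 1) > -1$, so both endpoints are integrable. The only step that requires care rather than mere computation is the upgrade from the arc inequality in Lemma \ref{harmonic-and-lebesgue-measures} to a pointwise density bound for the pushforward measure, which is a routine application of the Lebesgue differentiation theorem on the circle; everything else is a direct calculation that exploits precisely the shape of $\phi$ and the choice of $\gamma$.
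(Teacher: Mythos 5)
Your proof is correct and follows essentially the same strategy as the paper's: both rest on Lemma \ref{harmonic-and-lebesgue-measures} and on the constraint $\gamma < 1/(1-\alpha)$, which is exactly what makes $\phi(t)^{\alpha-1}$ integrable at the endpoints. The only cosmetic difference is that you upgrade the arc estimate $\omega_0(\tau(J)) \lesssim |J|$ to a bounded density for the radial pushforward of $\omega_0$ and integrate $\phi^{\alpha-1}$ continuously, whereas the paper splits $I$ into Whitney arcs and sums the equivalent geometric series.
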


\begin{proof}
We  split $I = \bigcup_{n \in \mathbb{Z}} I_n$ into countably many Whitney arcs, so that  $|I_n| = (1/2)^{|n|} \cdot |I_0|$  and $I_m$ and $I_n$ have a common endpoint if $|m-n|=1$. In view of the above lemma,
$$
\int_{\tau(I_n)} u_{\max}(z) d\omega_0(z)  \lesssim \frac{|I|}{2^{|n|}} \cdot \biggl \{ \frac{h(I)}{2^{\gamma |n|}}\biggr \}^{\alpha-1}.
$$
By the choice of $\gamma$, the corollary follows after summing a convergent geometric series.
\end{proof}

We now verify Condition (1). With the choice of heights (\ref{eq:choice-of-heights}),
\begin{equation*}
\int_{\partial \tilde{K}_E} u_{\max}(z) d\omega_0(z) \lesssim \sum |I|^{\alpha^2-\alpha+1} u(z_I)^{1 - \alpha}.
\end{equation*}
Applying H\"older's inequality with exponents $1/\lambda$  and $1/(1- \lambda)$, we get
\begin{align}
\sum |I|^{\alpha^2-\alpha+1} u(z_I)^{1 - \alpha} 
& = \sum   |I|^{\alpha(\alpha-1)+\lambda} \cdot  |I|^{1-\lambda}  u(z_I)^{1-\alpha} \nonumber \\
& \le \Bigl (\sum |I|^{\frac{\alpha(\alpha-1)+\lambda}{\lambda}} \Bigr )^{\lambda} \Bigl (\sum |I|  u(z_I)^{\frac{1-\alpha}{1-\lambda}} \Bigr )^{1 - \lambda}.
\label{eq:holder-abc}
\end{align}
With the choice
$$
\lambda \, = \, \alpha \cdot \frac{1-\alpha}{1-\beta} \, < \, \alpha,
$$
we have
$$
\beta = \frac{\alpha(\alpha-1)+\lambda}{\lambda}  \qquad \text{and} \qquad \delta \, = \, \frac{1-\alpha}{1-\lambda} \, < \, 1.
$$
The first sum in (\ref{eq:holder-abc}) is finite as $E$ is a $\beta$-Beurling-Carleson set, while the
second sum is finite since the non-tangential maximal function of $u$ lies in $L^\delta$.


\subsubsection{Conditions (2a) and (2b)}

In order to verify that the special sawtooth domain $\tilde{K}_E$ satisfies Condition (2a), we need to check the Rodin-Warschawski condition for the existence of an angular derivative, cf. Theorem \ref{rodin-warschawski}. This will be done in Lemmas \ref{condition2a-1}
 and \ref{condition2a-2} below.

For a point $\zeta \in \partial \mathbb{D}$, we write $H(\zeta)$ for the length of the radius $[0, \zeta]$ that lies outside of $\tilde {K}_E$.

\begin{lemma}
\label{condition2a-1}
For a point $x \in E$ and a complementary arc $I \subset \partial \mathbb{D} \setminus E$, we have
$$
\int_{x e^{i \eta} \in I} \frac{H(xe^{i \eta})}{\eta^2} \, d\eta  \, \lesssim \, \frac{h(I) \cdot |I|}{\dist(x, I/2)^2}.
$$
\end{lemma}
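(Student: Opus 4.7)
The plan is to parametrize the tent $T_I$ explicitly, reduce to a one-dimensional integral, and exploit the boundary growth $\phi(t) \sim t^{\gamma}$ (with $\gamma > 1$) to absorb the singularity of $1/\eta^2$.

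First, by rotating and using the symmetry of $\phi$ about $t=1/2$, I may assume that $x = e^{i\theta_0}$ lies to the left of $I = (e^{i\theta_1}, e^{i\theta_2})$, so $\theta_0 \le \theta_1$. Setting $d_0 = \theta_1 - \theta_0 \ge 0$, the points $xe^{i\eta} \in I$ correspond to $\eta \in [d_0, d_0 + |I|]$, and $\dist(x, I/2) = d_0 + |I|/4 \asymp \max(d_0, |I|)$. For $\zeta = e^{i(\theta_1 + s)} \in I$ with $s \in [0,|I|]$, the definition of the tent gives $H(\zeta) \le \psi \cdot h(I)\cdot \phi(s/|I|)$. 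Changing variables to $t = s/|I|$ and setting $\rho = d_0/|I|$, one obtains
\[
\int_{xe^{i\eta} \in I} \frac{H(xe^{i\eta})}{\eta^2}\, d\eta \;\le\; \psi \, h(I) \int_0^1 \frac{\phi(t)}{(d_0 + t|I|)^2}\, |I|\, dt \;=\; \frac{\psi \, h(I)}{|I|} \cdot J(\rho),
\]
where $J(\rho) := \int_0^1 \phi(t)/(\rho+t)^2 \, dt$. Since the right-hand side of the lemma is $\asymp h(I)|I|/(|I|(1+\rho))^2 = h(I)/(|I|(1+\rho)^2)$, the task reduces to proving $J(\rho) \lesssim 1/(1+\rho)^2$.

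The bound $J(\rho) \lesssim 1/(1+\rho)^2$ splits into two easy cases. When $\rho \ge 1$, just use $\phi \le 1$ to get $J(\rho) \le 1/\rho^2 \int_0^1 \phi \asymp 1/(1+\rho)^2$. When $\rho < 1$, I use the global bound $\phi(t) \lesssim t^{\gamma}$ on $[0,1]$ (near $0$ by the stated asymptotic, and on $[1/2,1]$ because $\phi \le 1 \le 2^\gamma t^\gamma$). Splitting at $t = \rho$: on $[0,\rho]$ the integrand is $\le t^\gamma/\rho^2$, which integrates to $\rho^{\gamma-1}/(\gamma+1) \le 1/(\gamma+1)$ thanks to $\gamma > 1$; on $[\rho,1]$ the integrand is $\le t^{\gamma-2}$, which integrates to $\le 1/(\gamma-1)$, again using $\gamma > 1$. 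Thus $J(\rho)$ is bounded by a constant on $[0,1]$, which is $\asymp 1/(1+\rho)^2$ in that range.

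The only subtle point is the appearance of the exponent $\gamma > 1$; this is precisely the reason for imposing $1 < \gamma < 1/(1-\alpha)$ in the construction of the special Privalov star. The case where $x$ lies to the right of $I$ is symmetric, using the asymptotic $\phi(t) \sim (1-t)^\gamma$ at $t = 1$ after the substitution $t \mapsto 1-t$. No step is really the main obstacle here; the lemma is mostly a bookkeeping check that the tent profile was chosen steep enough for the Rodin--Warschawski integral to converge, and the work is entirely in verifying convergence of a scalar integral.
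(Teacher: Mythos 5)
Your proof is correct and uses essentially the same mechanism as the paper's: both exploit the tent profile $\phi(t)\lesssim t^{\gamma}$ with $\gamma>1$ to tame the $1/\eta^2$ singularity at the near endpoint of $I$. The paper implements this by decomposing $I$ into Whitney arcs $I_n$ and summing the geometric series $\sum_n 2^{-(\gamma-1)|n|}$, whereas you evaluate the equivalent scalar integral $J(\rho)=\int_0^1\phi(t)(\rho+t)^{-2}\,dt$ directly; these are simply the discrete and continuous versions of the same estimate.
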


\begin{proof}
 We decompose $I = \bigcup_{n \in \mathbb{Z}} I_n$ into a union of countably many Whitney arcs so that 
 $|I_n| = (1/2)^{|n|} \cdot |I_0|$ and $I_m$ and $I_n$ have a common endpoint if $|m-n|=1$.
Since $\dist(x, I_n) \ge 2^{-|n|} \dist(x, I/2)$,
\begin{align*}
\int_{x e^{i \eta} \in I_n} \frac{H(xe^{i \eta})}{\eta^2} \, d\eta & \lesssim \frac{\{\max_{\zeta \in I_n} H(\zeta)\} \cdot |I_n|}{\dist(x, I_n)^2} \\
& \lesssim  \frac{2^{-\gamma |n|} h(I) \cdot 2^{-|n|}|I|}{\{ 2^{-|n|} \dist(x, I/2) \}^2} \\
& =  2^{-(\gamma-1)|n|} \cdot \frac{h(I) \cdot |I|}{\dist(x, I/2)^2}.
\end{align*}
The lemma follows after summing a convergent geometric series.
\end{proof}

\begin{lemma}
\label{condition2a-2}
For $\mu$ a.e.~$x \in \partial \mathbb{D}$, the sum over complementary arcs
$$
\sum \frac{h(I) \cdot |I|}{\dist(x, I/2)^2} < \infty.
$$
\end{lemma}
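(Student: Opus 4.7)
The plan is to bound the expected value of the sum under $\mu$ and invoke Fubini. Concretely, I would show that
$$
\int_E \sum_I \frac{h(I)\,|I|}{\dist(x, I/2)^{2}}\, d\mu(x) \,<\, \infty,
$$
which forces the integrand to be finite $\mu$-a.e. The key per-arc estimate is
$$
\int_{E} \frac{d\mu(x)}{\dist(x, I/2)^{2}} \, \lesssim \, \frac{u(z_I)}{|I|},
$$
which should follow from the Poisson-kernel identity $u(z_I) = P_\mu(z_I) \asymp |I| \int d\mu(\zeta)/|\zeta - z_I|^{2}$, since $1-|z_I| \asymp |I|$.

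The one delicate point is that $\dist(x, I/2)$ can be strictly smaller than $|x-z_I|$ when $x$ sits near $I$ (for instance, at an endpoint), so I would split $E$ into $\{x : \dist(x,I) > |I|\}$ and $\{x : \dist(x,I) \le |I|\}$. On the far piece $\dist(x, I/2) \asymp |x-z_I|$ and the bound is immediate. On the near piece, $\dist(x, I/2) \ge |I|/4$, so the integrand is $\le 16/|I|^{2}$, and the mass is $\mu(3I)$; using $u(z_I) \gtrsim \mu(3I)/|I|$ (Poisson-kernel lower bound on $3I$) gives the same bound $u(z_I)/|I|$.

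With this in hand, Fubini and an interchange of sum and integral yield
$$
\int_E \sum_I \frac{h(I)\,|I|}{\dist(x, I/2)^{2}}\, d\mu(x) \,\lesssim\, \sum_I h(I)\, u(z_I).
$$
The definition $h(I) = \min\bigl(|I|,\, |I|^\alpha/u(z_I)\bigr)$ gives $h(I)\,u(z_I) \le |I|^\alpha$. Since $E$ is a $\beta$-Beurling-Carleson set and $|I| \le 1$ with $\beta < \alpha$, we have $|I|^\alpha \le |I|^\beta$, so $\sum_I |I|^\alpha \le \|E\|_{\BC_\beta} < \infty$, completing the proof.

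The main obstacle is the geometric comparison between $\dist(x, I/2)$ and $|x - z_I|$; everything else is routine once the case split is in place. Note also that the definition of $h(I)$ has been engineered precisely to make the final sum reduce to a $\beta$-Beurling-Carleson type sum with slightly worse exponent $\alpha > \beta$, and the gap $\alpha - \beta$ is what gives the convergence margin.
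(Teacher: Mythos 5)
Your proposal is correct and takes essentially the same route as the paper: integrate the sum against $\mu$, use Fubini, bound each term by $h(I)\,u(z_I)\le|I|^\alpha$ via the per-arc Poisson estimate $\int \dist(x,I/2)^{-2}\,d\mu(x)\lesssim u(z_I)/|I|$, and conclude with the $\alpha$-Beurling-Carleson condition. The paper compresses the per-arc estimate to the remark that ``the expression in the parentheses is $O(1)$''; your near/far case split simply supplies the details the paper leaves to the reader.
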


\begin{proof}
It is enough to check that
\begin{align*}
\int_{\partial \mathbb{D}} \biggl \{ \sum_I \frac{h(I) \cdot |I|}{\dist(x, I/2)^2} \biggr \} \,d\mu(x)
& \le 
\int_{\partial \mathbb{D}} \biggl \{ \sum_I \frac{|I|^{\alpha+1}}{u(z_I) \cdot \dist(x, I/2)^2} \biggr \} \, d\mu(x) \\
& =  \sum_I |I|^\alpha \cdot  \biggl \{ \frac{1}{u(z_I)} \int_{\partial \mathbb{D}} \frac{|I|}{ \dist(x, I/2)^2} \, d\mu(x) \biggr \}
\end{align*}
is finite. To see this, notice that the expression in the parentheses is $O(1)$ and use that $E$ is a $\beta$-Beurling-Carleson set (and hence, an $\alpha$-Beurling-Carleson set).
\end{proof}

In view of Lemma \ref{increasing-angular-derivatives},  to achieve Condition (2b), we only need to select a sufficiently small auxiliary parameter $0 < \psi \le 1$.

\subsection{Existence}
\label{sec:existence}

To prove Theorem \ref{nearly-maximal}, it remains to construct a nearly-maximal solution with deficiency measure $\mu$ supported on a $\beta$-Beurling-Carleson set $E$.

For $n \in \mathbb{R}$, let $u_n$ be the solution of $\Delta u = u^p \cdot \chi_{u > 0}$ which is equal to $n$ on the unit circle.
Since $u_n - P_\mu$ is a subsolution and $n - P_\mu$ is a supersolution of $\Delta u = u^p \cdot \chi_{u > 0}$  with the same boundary data, by the principle of sub- and super-solutions, there exists a unique solution $u_{\mu, n}$ such that
\begin{equation}
\label{eq:u-mu-n-bound}
u_n - P_\mu \, \le \, u_{\mu, n} \, \le \, n - P_\mu.
\end{equation}
As the solutions $u_{\mu, n}$ are increasing in $n$ and bounded above by $u_{\max}$, the limit $u := \lim_{n \to \infty} u_{\mu, n}$ exists. Taking $n \to \infty$ in (\ref{eq:u-mu-n-bound}), we get
$$
u_{\max} - P_\mu \le u,
$$
which tells us that $u$ is a nearly-maximal solution whose deficiency measure is at most $\mu$. 

To show that the mass of the deficiency measure of $u$ is at least $\mu(\partial \mathbb{D})$, we use the special sawtooth domain $\tilde{K}_E$ constructed in Section \ref{sec:special-star}.
For $0 < r < 1$, we form the truncated region $K_r = \tilde{K}_E \cap \mathbb{D}_r$. Its boundary consists of two parts: a {\em sawtooth} part $\partial_{\saw} K_r  = \partial K_r  \setminus \partial \mathbb{D}_r$ and a {\em round} part $\partial_{\round} K_r = \partial K_r  \cap \partial \mathbb{D}_r$.
We estimate $u_{\mu, n}$ on $\partial K_r$ by
 $$
u_{\mu, n} \, \le \,  f \, := \,  \begin{cases} u_{\max}, \quad &\text{on }\partial_{\saw} K_{r}, \\ n - P_\mu, \quad &\text{on }\partial_{\round} K_r. \end{cases}
 $$
By the maximum principle, $u$ is bounded above on $K_r$ by the harmonic extension of these boundary values.
Taking $r \to 1$ while keeping $n$ fixed, we get
\begin{align}
\label{eq:special-bound}
u(z) & \le \int_{\partial \tilde{K}_E} u_{\max}(w) d\omega_z(w) - \lim_{r \to 1} \int_{\partial_{\round} K_r} P_\mu(w) d\omega_{K_r, z}(w) \\
& = A(z) - B(z),
\end{align}
for $z \in \tilde{K}_E$. In the equation above, $\omega_z$ and $\omega_{K_r,z}$ denote harmonic measures from the point $z$ in the domains $\tilde{K}_E$ and $K_r$ respectively.
Condition (1) guarantees that $A(z)$ is finite. Below, we will see that Condition (2) ensures that $B(z)$ is sufficiently large to be  responsible for the deficiency of $u$.

\subsubsection{A lemma featuring Privalov stars}

For a closed subset $F \subset \partial \mathbb{D}$, we write $K_{F, \theta}$ for the standard Privalov star, which is defined as the union of Stolz angles emanating from $F$ with aperture $0 < \theta < \pi$. We will use the following elementary lemma:

\begin{lemma}
\label{star-lemma}
Let $\mu$ be a positive measure on the unit circle and $F \subset \partial \mathbb{D}$ be a closed set. For any aperture $0 < \theta < \pi$,
$$
\limsup_{\rho \to 1} \int_{K_{F,\theta} \cap \partial \mathbb{D}_\rho} P_\mu(w) |dw| \le \mu(F).
$$
Conversely, for any $\varepsilon > 0$, there exists an aperture $0 < \theta < \pi$ so that
$$
\liminf_{\rho \to 1} \int_{K_{F,\theta} \cap \partial \mathbb{D}_\rho} P_\mu(w) |dw| \ge (1-\varepsilon) \mu(F).
$$
\end{lemma}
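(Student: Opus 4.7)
The plan is to swap the order of integration. Writing $P_\mu$ as a Poisson integral against $\mu$ and applying Fubini, the quantity in question becomes
$$
\int_{K_{F,\theta}\cap\partial\mathbb{D}_\rho} P_\mu(w)\,|dw| \;=\; \int_{\partial\mathbb{D}} g_\rho(\zeta)\,d\mu(\zeta),
$$
where $g_\rho(\zeta)$ is the Poisson-kernel mass (as seen from $\zeta$) of the truncated Privalov star $K_{F,\theta}\cap\partial\mathbb{D}_\rho$, that is, the integral of $\frac{1-|w|^2}{|w-\zeta|^2}$ over that set with an appropriate normalization. Integrating the Poisson kernel over the whole circle gives the universal bound $g_\rho(\zeta)\le 1$, and the two halves of the lemma amount to analyzing $g_\rho(\zeta)$ separately for $\zeta\in F$ and $\zeta\notin F$.

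For the upper bound, I argue that $g_\rho(\zeta)\to 0$ whenever $\zeta\notin F$. Setting $d=\dist(\zeta,F)>0$, every $w\in K_{F,\theta}\cap\partial\mathbb{D}_\rho$ lies in some Stolz angle $S_\theta(\zeta')$ with $\zeta'\in F$ and hence satisfies $|w-\zeta'|\le C_\theta(1-\rho)$; for $\rho$ sufficiently close to $1$ this forces $|w-\zeta|\ge d/2$, and substituting into the integrand gives $g_\rho(\zeta)\lesssim (1-\rho)/d^2\to 0$. Splitting the outer integral into its restrictions to $F$ and to $\partial\mathbb{D}\setminus F$, the first piece is at most $\mu(F)$ by $g_\rho\le 1$, while the second piece vanishes in the limit by dominated convergence. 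Taking $\limsup$ yields the desired inequality.

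For the lower bound, fix $\zeta\in F$ and use the inclusion $S_\theta(\zeta)\subset K_{F,\theta}$ to bound $g_\rho(\zeta)$ from below by the Poisson-kernel mass of the single Stolz angle $S_\theta(\zeta)\cap\partial\mathbb{D}_\rho$. Parametrising $w=\rho e^{i(\arg\zeta+\phi)}$ and changing variables $u=\phi/(1-\rho)$, a direct computation shows that this single-Stolz-angle integral converges as $\rho\to 1$ to a limit of the form $\tfrac{2}{\pi}\arctan\alpha(\theta)$, where $\alpha(\theta)\to\infty$ as $\theta\to\pi$, and the convergence is uniform in $\zeta$ by rotational symmetry of the Poisson kernel. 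Choosing $\theta$ so close to $\pi$ that this limit exceeds $1-\varepsilon$, Fatou's lemma applied to the restriction of $g_\rho$ to $F$ gives
$$
\liminf_{\rho\to 1}\int_{K_{F,\theta}\cap\partial\mathbb{D}_\rho} P_\mu(w)|dw|
\;\ge\;\int_F \liminf_{\rho\to 1} g_\rho(\zeta)\,d\mu(\zeta)\;\ge\;(1-\varepsilon)\mu(F).
$$

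The only step requiring genuine calculation is the single-Stolz-angle Poisson integral and the uniformity of its convergence in the basepoint $\zeta$; everything else is Fubini together with the standard geometric estimate $|w-\zeta'|\asymp 1-\rho$ for points $w$ on $\partial\mathbb{D}_\rho$ inside a Stolz angle $S_\theta(\zeta')$.
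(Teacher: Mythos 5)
The paper does not actually prove this lemma; it is stated as ``elementary'' and used without proof. Your Fubini/Tonelli interchange (both integrands are positive, so the swap is automatic), the dual viewpoint via $g_\rho(\zeta)=\int_{K_{F,\theta}\cap\partial\mathbb D_\rho}P(w,\zeta)\,|dw|$ with the universal bound $g_\rho\le 1$, the pointwise decay of $g_\rho$ off $F$ together with dominated convergence for the upper bound, and the one-variable Stolz-angle computation $\lim_{\rho\to1}g_\rho(\zeta)=\tfrac{2}{\pi}\arctan\tan(\theta/2)=\theta/\pi\to1$ combined with Fatou for the lower bound are all correct; this is a clean and self-contained way to establish what the paper takes for granted.
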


\subsubsection{Pruning the set $E$ further}

Recall that $E'$ was defined as the subset of $E$ where the angular derivative $1-\varepsilon < |(\varphi^{-1})'(\zeta)| < 1$, and we had arranged that $\mu(E') \ge (1 - \varepsilon) \mu(E)$. 
By sacrificing a little bit more mass, we can obtain uniformity of non-tangential limits and truncated Stolz angles.
More precisely, for any $\varepsilon > 0$ and $\theta > 0$, one can find a closed subset $E'' \subset E'$ and $0 < \rho_0 < 1$ such that
\begin{equation}
\label{eq:uniformity1}
\mu(E'') \ge (1- 2\varepsilon)\mu(E),
\end{equation}
\begin{equation}
\label{eq:uniformity2}
1- 2\varepsilon < |(\varphi^{-1})'(z)| < 1 + \varepsilon, \qquad \text{for }z \in K_{E'', \theta} \cap \{ \rho_0 < |w| < 1\}
\end{equation}
and
\begin{equation}
\label{eq:uniformity3}
K_{E'', \theta} \cap \{ \rho_0 < |w| < 1\} \subset \tilde{K}_E.
\end{equation}

\subsubsection{Strategy}

To prove the existence part of Theorem \ref{nearly-maximal}, we show:

\begin{lemma}
\label{nearly-maximal2}
For any $\varepsilon > 0$, we can choose the aperture $0 < \theta < \pi$ sufficiently close to $\pi$ so that
\begin{equation}
\label{eq:H-estimate}
\int_{K_{E'', \theta}\cap \partial \mathbb{D}_\rho} A(z) |dz| \le \varepsilon \cdot  \mu(E''),
\end{equation}
 and
\begin{equation}
\label{eq:P-estimate}
\int_{K_{E'', \theta} \cap \partial \mathbb{D}_\rho} B(z) |dz| \ge (1-\varepsilon) \cdot \mu(E''),
\end{equation}
for all $\rho_0 < \rho < 1$ sufficiently close to 1.
\end{lemma}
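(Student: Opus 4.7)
The strategy is to reduce both estimates to a single vanishing statement and then prove it by transferring to the unit disk via the Riemann map $\varphi: (\mathbb{D}, 0) \to (\tilde{K}_E, 0)$ with inverse $\psi$.

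Since $P_\mu$ is harmonic on $\mathbb{D}$ (hence on $K_r$), the mean-value identity on $K_r$ and the limit $r \to 1$ give
\[
B(z) \, = \, P_\mu(z) - C(z), \qquad C(z) \, := \, \int_{\partial_{\saw} \tilde{K}_E} P_\mu(w)\, d\omega_{\tilde{K}_E, z}(w),
\]
and positivity of $B$ automatically forces $C(z) \le P_\mu(z)$, in particular $C(0) \le P_\mu(0) < \infty$. By Lemma~\ref{star-lemma}, for $\theta$ close enough to $\pi$,
\[
(1-\varepsilon/3)\mu(E'') \, \le \, \liminf_{\rho \to 1}\int_{K_{E'', \theta} \cap \partial \mathbb{D}_\rho} P_\mu\, |dz| \, \le \, \limsup_{\rho \to 1}\int_{K_{E'', \theta} \cap \partial \mathbb{D}_\rho} P_\mu\, |dz| \, \le \, \mu(E'').
\]
Both (\ref{eq:H-estimate}) and (\ref{eq:P-estimate}) therefore reduce to the common claim
\[
\int_{K_{E'', \theta} \cap \partial \mathbb{D}_\rho} H_f(z)\, |dz| \, \longrightarrow \, 0 \quad (\rho \to 1), \qquad (\ast)
\]
with $H_f(z) := \int f\, d\omega_{\tilde{K}_E, z}$, applied to $f = u_{\max}$ (finite $\int f\, d\omega_{\tilde K_E, 0}$ by Condition~(1)) and to $f = P_\mu|_{\partial_{\saw}}$ (finite integral $\le P_\mu(0)$).

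Now transfer to the unit disk. Condition~(1) together with $u_{\max} \to \infty$ at $\partial \mathbb{D}$ forces $\omega_{\tilde{K}_E, 0}(E) = 0$, so $\psi(E) \subset \partial \mathbb{D}$ has Lebesgue measure zero. A mass-matching check (using $(H_f \circ \varphi)(0) = H_f(0) = \int_{\partial \mathbb{D}} f \circ \varphi \, d\theta/(2\pi)$) then rules out any singular part of the Poisson representation on $\psi(E)$, so $H_f \circ \varphi$ is precisely the Poisson extension of $f \circ \varphi \in L^1(\partial \mathbb{D})$, with $f \circ \varphi$ set to zero on the null set $\psi(E)$. A routine Fubini argument on the convolution $g \ast P_r$ yields uniform equi-integrability: for each $\eta > 0$ there is $\delta > 0$ with $\int_F (H_f \circ \varphi)(r e^{i\theta})\, d\theta < \eta$ whenever $|F| \le \delta$ and $r < 1$. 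The uniformity condition (\ref{eq:uniformity2}) says $|\psi'| \in (1-2\varepsilon, 1+\varepsilon)$ on the relevant region, so $\psi$ sends $K_{E'', \theta} \cap \partial \mathbb{D}_\rho$ to a near-circular curve in $\mathbb{D}$ whose angular projection $F_\rho$ onto $\partial \mathbb{D}$ lies inside a neighborhood of $\psi(E'')$ of width $\asymp (1-\rho)$; since $\dim_{\mathrm{Mink}} \psi(E'') \le \dim E \le \beta < 1$, we get $|F_\rho| = O((1-\rho)^{1-\beta}) \to 0$. Equi-integrability applied at $F = F_\rho$ together with the $O(\varepsilon)$-arclength distortion of $\psi$ then delivers $(\ast)$ and closes both estimates.

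The \textbf{main obstacle} is upgrading the equi-integrability bound from circles $\partial \mathbb{D}_r$ to the genuinely non-circular curve $\psi(K_{E'', \theta} \cap \partial \mathbb{D}_\rho)$. This is handled by splitting $f \circ \varphi = g_1 + g_2$ into an $L^\infty$ piece (whose integral over the curve is dominated by $\|g_1\|_\infty \cdot |\text{curve}|$, and $|\text{curve}| \to 0$ because $|F_\rho| \to 0$) and an arbitrarily $L^1$-small remainder (whose Poisson extension is controlled pointwise by a Hardy--Littlewood maximal function, integrated against the thin set $F_\rho$). Once this routine but careful passage is made, no deeper analysis of Lebesgue points at $\psi$-images of $\mu$-typical points of $E''$ is required.
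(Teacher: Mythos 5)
Your overall plan -- rewrite $B = P_\mu - C$, apply Lemma~\ref{star-lemma} to $P_\mu$ directly on the unit circle, and reduce both (\ref{eq:H-estimate}) and (\ref{eq:P-estimate}) to showing that $\int_{K_{E'',\theta}\cap \partial\mathbb{D}_\rho} H_f \to 0$ when $H_f$ is the solution with absolutely continuous $L^1(\omega_0)$ boundary data -- is a sound alternative to the paper's route. The paper instead realizes $B\circ\varphi$ as the Poisson integral of the pullback of $|\psi'|\,d\mu$ via Theorem~\ref{radon-nikodym2} and then applies Lemma~\ref{star-lemma} in the $\varphi$-coordinate; your decomposition avoids invoking Theorem~\ref{radon-nikodym2} altogether, and in fact produces a single $(1-\varepsilon)$ factor rather than the paper's $(1-\varepsilon)^2$. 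The reduction of the $A$-estimate to the same vanishing statement with $f = u_{\max}$ matches the paper.

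The gap is in your treatment of the $L^1$-small remainder $g_2$. The Hardy--Littlewood maximal function of an $L^1$ function is only weak-$L^1$, not $L^1$: $\int_{F_\rho} Mg_2\,d\theta$ is not controlled by $\|g_2\|_1$ and can even be infinite for arbitrarily small $|F_\rho|$ (take $g_2$ an approximate Dirac and $F_\rho$ a small interval around its support). So ``Poisson extension controlled pointwise by the HL maximal function, integrated against $F_\rho$'' does not give the estimate you need. The correct and simpler input is the one the paper hides behind ``Harnack's inequality'': the curve $\psi(K_{E'',\theta}\cap\partial\mathbb{D}_\rho)$ lies in a hyperbolically bounded annular neighborhood of a circle $\partial\mathbb{D}_{r}$, so Harnack gives $P_{g_2}(\rho(\theta)e^{i\theta}) \lesssim P_{g_2}(re^{i\theta})$ with an absolute constant, and then a direct Fubini computation yields $\int_{F_\rho} P_{g_2}(re^{i\theta})\,d\theta \le \|g_2\|_1$ (no maximal function needed). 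Replacing your HL step with this Harnack-plus-Fubini bound closes the argument. Separately, the Minkowski-dimension estimate $|F_\rho| = O((1-\rho)^{1-\beta})$ is over-engineering: since $F_\rho$ lies in a $C(1-\rho)$-neighborhood of the Lebesgue-null set $\psi(E'')$, continuity of Lebesgue measure already gives $|F_\rho| \to 0$, which is all that is used.
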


 \begin{proof}[Proof of existence in Theorem \ref{nearly-maximal} assuming Lemma \ref{nearly-maximal2}]
 Decompose $u = u_+ - u_-$ into positive and negative parts.
For $\rho_0 < \rho < 1$, we have
\begin{align*}
\int_{|z| = \rho} (u_{\max}(z) - u(z)) |dz| & \ge \int_{|z| = \rho} u_- (z) |dz| \\
& \ge  \int_{K_{E'',\theta} \cap \partial \mathbb{D}_\rho} \bigl ( B(z) - A(z) \bigr ) |dz| \\
& \ge  (1 - 2\varepsilon) \mu(E'') \\ 
& \ge  (1 - 2\varepsilon)^2 \mu(E).
\end{align*}
Since $\varepsilon > 0$ was arbitrary,  the mass of the deficiency measure of $u$ is at least $\mu(E)$.  
\end{proof}

The remainder of the paper is devoted to proving Lemma \ref{nearly-maximal2}.

\subsubsection{Estimating $A(z)$}
\label{sec:A-estimate}

Notice that $A(z)$ is a positive harmonic function on $\tilde{K}_E$ which extends absolutely continuous boundary values $u_{\max} \in L^1(\partial \tilde{K}_E, \omega_0)$. Therefore, if $\varphi$ is a conformal map from $(\mathbb{D},0)$ to $(\tilde{K}_E, 0)$, then $A \circ \varphi$ is a positive harmonic function on the unit disk with absolutely continuous boundary values on the unit circle. Since $\varphi^{-1}(E'')$ has Lebesgue measure zero by Loewner's lemma, Lemma \ref{star-lemma} tells us that
$$
\lim_{\rho \to 1} \int_{K_{\varphi^{-1}(E''),\theta} \cap \partial \mathbb{D}_\rho} (A \circ \varphi)(w) |dw| = 0.
$$
From here, (\ref{eq:H-estimate}) follows after an application of Harnack's inequality. 

\subsubsection{Estimating $B(z)$}

Since $\partial K_r = \partial_{\round} K_r \cup \partial_{\saw} K_r$,
\begin{equation*}
\int_{\partial_{\round} K_r} P_\mu(w) d\omega_{K_r, z}(w) = P_\mu(z) - \int_{\partial_{\saw} K_r} P_\mu(w) d\omega_{K_r, z}(w), \qquad z \in K_r.
\end{equation*}
By the monotonicity properties of harmonic measure, the integrals over $\partial_{\saw} K_r$ are increasing in $r$.
Taking $r \to 1$, we get
\begin{equation}
\label{eq:B-expression2}
B(z) = P_\mu(z) -\int_{\partial \tilde{K}_E \cap \mathbb{D}} P_\mu(w) d\omega_{z}(w), \qquad z \in \tilde{K}_E.
\end{equation}
Since $B$ is a positive harmonic function on $\tilde{K}_E$, the composition $B \circ \varphi$ is a positive harmonic function on the unit disk. Inspection shows that 
$B \circ \varphi = P_\nu$ for a positive measure $\nu$ supported on $\varphi^{-1}(E)$.
In fact, Theorem \ref{radon-nikodym2} tells us that 
$$
\nu = \varphi^* \bigl ( |\psi'(\zeta)| d\mu(\zeta) \bigr ).
$$
Since $1 - \varepsilon < |\psi'(\zeta)| < 1$ on $E' \supseteq E''$ by Condition (2b),
$$
\nu(\varphi^{-1}(E'')) \ge (1-\varepsilon)\mu(E'').
$$
Now, by Lemma \ref{star-lemma}, if the aperture $\theta$ is sufficiently close to $\pi$, then
\begin{align*}
 \liminf_{\rho \to 1} \int_{K_{\varphi^{-1}(E''),\theta}  \cap \partial \mathbb{D}_\rho} (B \circ \varphi)(w) |dw| 
& \ge   (1-\varepsilon) \nu(\varphi^{-1}(E'')) \\
 & \ge  (1-\varepsilon)^2 \mu(E'').
\end{align*}
The estimate (\ref{eq:P-estimate}) follows from Harnack's inequality as in Section \ref{sec:A-estimate}.

 \subsection*{Acknowledgements}
 
 The authors wish to thank Adem Limani for discovering a mistake in a previous version of the paper.
This research was supported by the Israeli Science Foundation (grant 3134/21), the Generalitat de Catalunya (grant 2021 SGR 00071), the Spanish Ministerio de Ciencia e Innovación (project PID2021-123151NB-I00) and the Spanish Research Agency (Mar\'ia de Maeztu Program CEX2020-001084-M).

\bibliographystyle{amsplain}

\end{document}